\tikzset{
    disc/.style={fill=black!40, draw=black,thick}
}
\tikzset{
    discg/.style={fill=#1, draw=black,thick}
}
\tikzset{
    vertex/.style={circle, draw, fill=black!50, inner sep=0pt, minimum width=5pt}
}
\tikzset{
     mystar/.style={star,star points=5,star point ratio=2,fill=blue!20,draw,inner sep=1.5pt}
}
\def\centerarc[#1](#2)(#3:#4:#5)
\newcommand{\rn}[1]{\Romanbar{#1}}
\newcommand{\Z}{\ensuremath \mathbb{Z}}
\newcommand{\norm}[1]{\left\lvert #1 \right\rvert} 
\newcommand{\mnorm}[1]{\left\| #1 \right\|} 
\newcommand{\tr}{\operatorname{Trace}}
\newcommand{\lb}[1]{{\lfloor #1 \rfloor_h}}
\newcommand{\ub}[1]{{\lceil #1 \rceil_h}}
\newcommand{\law}{\operatorname{Law}}
\newcommand{\J}{\boldsymbol{\mathcal{J}}}
\newcommand{\W}{\boldsymbol{W}}
\newcommand{\differential}{D}
\newcommand{\R}{\mathbb{R}}
\definecolor{ncolor}{rgb}{0.0, 0.5, 0.0}
\definecolor{mcolor}{rgb}{0.6, 0.1, 0.6}
\def\J{\mathrm{Couplings}}
\definecolor{midnight_color}{rgb}{0.000000,0.000000,0.501961}
\definecolor{collision_color}{rgb}{0.990445,0.502228,0.032891}
\definecolor{hamiltonian_color}{rgb}{0.000111,0.001760,0.998218}
\definecolor{rutgers_red}{rgb}{0.756199,0.000000,0.115559}
\theoremstyle{plain}
\newtheorem{prototheorem}{Theorem}
\newtheorem{protoconj}[prototheorem]{Conjecture}
\newtheorem{thm}[prototheorem]{Theorem}
\newtheorem{lem}[prototheorem]{Lemma}
\newtheorem{defn}[prototheorem]{Definition}
\newtheorem{cor}[prototheorem]{Corollary}
\newtheorem{assumption}[prototheorem]{Assumption}
\newtheorem{rem}[prototheorem]{Remark}
\DeclarePairedDelimiterX{\infdivx}[2]{(}{)}{%
  #1\;\delimsize\|\;#2%
}
\newcommand{\KL}{\mathrm{KL}\infdivx}
\DeclarePairedDelimiterX{\distx}[2]{(}{)}{%
  #1  ,  #2%
}
\newcommand{\TV}{\mathrm{TV}\distx}
\begin{document}
\begin{frontmatter}

\title{Mixing Time Guarantees for Unadjusted Hamiltonian Monte Carlo}
\runtitle{Mixing Time Guarantees for uHMC}

\begin{aug}
\Author[a]{\fnms{Nawaf} \snm{Bou-Rabee}\ead[label=e1]{nawaf.bourabee@rutgers.edu}}
\and
\Author[b]{\fnms{Andreas} \snm{Eberle}\ead[label=e2]{eberle@uni-bonn.de}}
\address[a]{Department of Mathematical Sciences \\ Rutgers University Camden \\ 311 N 5th Street \\ Camden, NJ 08102 USA \\ \href{mailto:nawaf.bourabee@rutgers.edu}{nawaf.bourabee@rutgers.edu}}
\address[b]{Institut f\"{u}r Angewandte Mathematik \\  Universit\"{a}t Bonn \\ Endenicher Allee 60 \\  53115 Bonn, Germany \\ \href{mailto:eberle@uni-bonn.de}{eberle@uni-bonn.de}}
\runAuthor{N.~Bou-Rabee \and A.~Eberle}
\end{aug}

\begin{abstract}
We provide quantitative upper bounds on the total variation mixing time of the Markov chain corresponding to the unadjusted Hamiltonian Monte Carlo (uHMC) algorithm. For two general classes of models and fixed time discretization step size $h$, the mixing
time is shown to depend only logarithmically on the dimension. Moreover, we provide quantitative
upper bounds on the total variation distance 
between the invariant measure of the uHMC chain
and the true target measure. As a consequence,
we show that 
an $\varepsilon$-accurate approximation of the target distribution $\mu$ 
in total variation distance can be achieved 
by uHMC for a broad class of models with 
$O\left(d^{3/4}\varepsilon^{-1/2}\log (d/\varepsilon )\right)$
gradient evaluations, and for mean field models
with weak interactions with
$O\left(d^{1/2}\varepsilon^{-1/2}\log (d/\varepsilon )\right)$
gradient evaluations. 
The proofs are based on the construction of successful couplings 
for uHMC that realize the upper bounds. 
\end{abstract}

\begin{keyword}[class=MSC2010]
\kwd[Primary ]{60J05}
\kwd[; secondary ]{65C05,65P10}
\end{keyword}

\begin{keyword}
\kwd{Markov Chain Monte Carlo}
\kwd{Hamiltonian Monte Carlo}
\kwd{Mixing Time}
\kwd{Couplings}
\kwd{Variational Integrators}
\end{keyword}

\end{frontmatter}


\section{Introduction}


A central problem in Markov chain Monte Carlo (MCMC) is to determine the number of MCMC steps that guarantees that the distribution of the chain is a good approximation of its invariant probability measure.  This ``mixing time'' is commonly measured in terms of the total variation distance. Many tools have been developed for bounding mixing times, see \cite{levin2009markov} for an overview in the case of discrete state spaces. 
For Markov processes on continuous state spaces,
these tools 
include geometric and analytic approaches based on conductance and isoperimetric inequalities \cite{MontenegroTetali,Vempala}, spectral gaps and functional inequalities \cite{SaloffCoste, bakry2013analysis}, and hypocoercivity \cite{VillaniHypo,DolbeaultMouhotSchmeiser,ArmstrongMourrat,LuWang}, as well as probabilistic approaches that are mainly based on couplings, possibly in combination with drift/minorization conditions, see for example \cite{Lindvall,LindvallRogers, MeTw2009,hairer2006ergodicity,HairerMattinglyScheutzow,Eb2016A,eberle2019couplings,EberleMajka2019}.
A focus of current research has become to understand the mixing properties of Hamiltonian Monte Carlo (HMC) in representative models that exhibit both high-dimensionality and non-convexity. 


HMC is an MCMC method that is based on deterministic Hamiltonian dynamics combined with momentum randomizations \cite{DuKePeRo1987,Ne2011}. With few exceptions, the Hamiltonian dynamics is not analytically solvable, and therefore, it is normally discretized in time using an evenly spaced grid with time step size $h > 0$ and a numerical integrator; most often velocity Verlet  \cite{HaLuWa2010,LeRe2004,BoSaActaN2018,MaWe2001}.  
Time discretization leads to an error in the invariant measure of the HMC method based on Verlet, which can in principle be reduced by taking a smaller time step size or removed by adding a Metropolis accept-reject step per HMC step.  The latter gives rise to \emph{adjusted} HMC (or Metropolis-adjusted HMC, or Metropolized HMC), and the method without adjustment is called \emph{unadjusted} HMC (uHMC).  The method based on the exact Hamiltonian dynamics is called \emph{exact} HMC and is mainly used as a theoretical tool.   


In recent years, there has been considerable progress in developing quantitative Wasserstein convergence bounds for HMC. For strongly logconcave distributions, a synchronous coupling was used to derive $L^1$-Wasserstein convergence bounds for exact HMC in \cite{mangoubi2017rapid}, and this result was subsequently sharpened in \cite{chen2019optimal}.  These papers use perturbative arguments to study the effect of time discretization error. 
For more general target distributions, a coupling-based argument for unadjusted and adjusted HMC was developed in \cite{BoEbZi2020} to obtain contractivity in a carefully designed Wasserstein distance equivalent to the standard $L^1$-Wasserstein distance $\W^1$. By applying this coupling componentwise, this result has been extended to mean-field models \cite{BoSc2020}. Moreover, by using a two-scale coupling, the approach has also been extended to perturbations of Gaussian measures in infinite dimension \cite{BoEb2020}.




On the other hand, explicit total variation (TV) bounds for HMC are scarce. 
The few existing results \cite{mangoubi2017rapid,chen2020fast}
hold only under restrictive conditions on the 
target and/or the initial distribution
(strong logconcavity, warm start), and except for 
very special cases, the optimal order of 
upper bounds is unknown.
This paper makes a contribution towards filling this gap in the literature. It provides explicit
upper bounds on (i) a one step $\W^1$ to TV
regularization of the uHMC transition kernel,
(ii) the mixing time of the uHMC chain started 
at an arbitrary initial distribution with finite
first moment, and (iii) the TV bias
between the stationary distribution of uHMC and the true target distribution. 

Recall that by the
dual descriptions of the distances, TV bounds correspond to bounds for integrals
of arbitrary bounded measurable functions 
(``observables''), while
$\W^1$ bounds correspond to bounds for  
Lipschitz continuous functions. Thus, one
advantage of TV bounds is that they require less
regularity of the observables and in particular 
apply to indicator functions. Moreover, in contrast to $\W^1$ distances, the TV distance is
scale invariant. If the transition kernels of
a Markov chain have strong smoothing properties,
then it is not difficult to deduce TV bounds
from $\W^1$ bounds. For HMC, however, such regularizing properties and their precise  dependence on the dimension are not obvious.

Although the results are stated below in a slightly different way, the main idea underlying the mixing time bounds for uHMC derived in this work is the construction of a successful coupling of two copies of uHMC starting from different initial conditions. This coupling builds on recently introduced couplings for uHMC which bring the two copies arbitrarily close in representative models having both non-convexity and high dimension \cite{BoEbZi2020,BoSc2020}.  After the two copies are close, a ``one-shot coupling'' is used to get them to coalesce with high probability \cite{madras2010quantitative}.   
This strategy works provided for small distances between the two copies, there is a large overlap between their corresponding distributions in the next step.
A key element of our proofs is correspondingly
a precise upper bound on this overlap, which
is equivalent to a $\W^1$ to TV
regularization bound for the uHMC transition kernel, see Lemmas \ref{lem:overlap1} and \ref{lem:regularization} below. 
The regularizing effect stems from the initial velocity randomization in each transition step
of the uHMC chain. We stress that it is not trivial to quantify the overlap precisely because each 
HMC step involves many deterministic moves.
A similar overlap bound can also
be applied in combination with a triangle inequality trick and existing bounds for the
Wasserstein bias in order to quantify the total
variation bias of the invariant measure,
see Lemmas \ref{lem:overlap2} and \ref{lem:tv_strong_error}.

Our work is closely related to the recent work 
by Chen, Dwivedi, Wainwright and Yu
\cite{chen2020fast}. In that paper, conductance methods are applied to prove TV convergence bounds for adjusted HMC assuming a warm start and that either an isoperimetric or a log-isoperimetric inequality holds. 
An important ingredient in their proofs is an overlap bound that is similar to the more refined bounds we develop for the one-shot coupling. 
For a warm start, it is shown in
\cite{chen2020fast} that an $\varepsilon$-accurate approximation can be achieved by adjusted HMC
with $O(d^{11/12} \log(\varepsilon^{-1}) )$
gradient evaluations for strongly log-concave target distributions, and with $O(d^{4/3} \log(\varepsilon^{-1}) )$ gradient evaluations for weakly non-log-concave target distributions. 
The convergence bounds for uHMC stated below 
have a substantially better dependence on the dimension $d$ and on the initial law. In particular,
we show that the mixing time of the uHMC Markov chain often depends only logarithmically on
$d$. 
The price to pay is that both the required step size and, correspondingly, the required number of gradient evaluations per transition step of the Markov chain, depend
substantially on the accuracy $\varepsilon$.
Unfortunately, it is 
currently not clear how to develop 
a coupling approach that
can provide sharp bounds for adjusted HMC.
The problem is that straightforward couplings are not efficient in combination with accept-reject
steps. See however the recent work \cite{chewi2020optimal} for promising first 
steps in this direction.

A related strategy as in our work has also recently been implemented for an ``OBABO'' discretization of second-order Langevin dynamics; see Proposition 3 and Proposition 22 of \cite{monmarche2020high}.  In this case, the proofs simplify substantially because they involve designing a one-shot coupling based on only one step of the OBABO scheme instead of many Verlet steps. The price to pay is that the resulting bounds in \cite{monmarche2020high} are
less sharp because in contrast to Lemma \ref{lem:overlap1} below, the overlap bound for OBABO applies only if the two copies are very close to each other (within distance of order $O(h^{1/2})$, where $h$ is the time step size).







\medskip


We now outline the main results of this paper.  Let $\pi(x,dy)$ denote the one-step transition kernel of exact HMC with fixed integration time $T > 0$ of the Hamiltonian flow during each transition step, and let $\tilde{\pi}(x,dy)$ denote the one-step transition kernel of uHMC operated with integration time $T$ and discretization time step size $h > 0$ satisfying $T \in h \mathbb{Z}$.   Denote by $\mu$ and $\tilde{\mu}$ the corresponding invariant probability measures of exact HMC and unadjusted HMC, respectively, and let $\TV{\nu}{\eta}$ denote the TV distance between two probability measures $\nu, \eta$ on $\mathbb{R}^d$.

The main result of this paper is a quantitative bound on $\TV{\tilde \mu}{\nu \tilde{\pi}^{m+1}}$ that holds for any $m \ge 0$ and any probability measure $\nu$ on $\mathbb{R}^d$. More precisely, by using a one-shot coupling we first obtain \begin{align}
\TV{\tilde{\mu}}{\nu \tilde{\pi}^{m+1}} \ \le \ (3/2) \, \left( T^{-2}  + 27 \, d \, L_H^2 \, T^4 \right)^{1/2}  \, \W^1(\tilde{\mu},\nu \tilde{\pi}^{m})
\end{align}
where $\W^1$ is the standard $L^1$-Wasserstein distance.  Therefore, if the uHMC chain converges geometrically in $\W^1$, i.e., 
$\W^1(\tilde{\mu},\nu \tilde{\pi}^{m}) \ \le \ M_1 e^{-c m} \W^1(\tilde \mu, \nu)$, then
\begin{align} \label{eq:uHMC_tv}
   \TV{\tilde{\mu}}{\nu \tilde{\pi}^{m+1}} \ \le \ (3/2) \, \left( T^{-2}  + 27 \, d \, L_H^2 \, T^4 \right)^{1/2}  \, M_1 \, e^{-c m}  \, \W^1(\tilde \mu, \nu) \;.
\end{align}  
As long as the constant $M_1$ and the $L^1$-Wasserstein distance between the initial distribution of the chain and the invariant measure of uHMC $\W^1(\tilde \mu, \nu)$
depend polynomially on the dimension $d$, \eqref{eq:uHMC_tv} implies that the mixing time of the uHMC chain depends at most logarithmically on $d$. Alternatively, it would also be possible to rephrase the bound in \eqref{eq:uHMC_tv} in the spirit of 
\cite{EberleMajka2019} as a contraction bound in a Kantorovich distance that has both a TV and a Wasserstein part.

In a second step, we quantify the TV distance between $\tilde \mu$ and $\mu$. By the triangle inequality 
\begin{align}
\TV{\mu}{\tilde{\mu} } \ &\le \ \TV{\mu \pi }{\mu \tilde{\pi} } + \TV{\mu \tilde{\pi} }{\tilde{\mu} \tilde{\pi} }  \nonumber \\
\ &\le \ \TV{\mu \pi }{\mu \tilde{\pi} } + (3/2) \, \left( T^{-2}  + 27 \, d \, L_H^2 \, T^4 \right)^{1/2}  \, \W^1(\mu, \tilde \mu) \;.
\end{align} 
By strong accuracy of the underlying integrator, i.e., $\W^1( \mu, \tilde{\mu}) \le M_2 \, h^2$,  and using another one-shot coupling to estimate  $\TV{\mu \pi }{\mu \tilde{\pi} }$, we obtain \begin{align}
\label{eq:uHMC_im}
   \TV{\mu}{\tilde{\mu} } \ \le \  C \, h^2 +  (3/2) \, \left( T^{-2}  + 27 \, d \, L_H^2 \, T^4 \right)^{1/2}  \, M_2 \, h^2   \;, 
\end{align} 
where $C$ is a constant given in \eqref{eq:C} which grows like $d^{3/2}$ for general $U$ and $d^{1/2}$ for quadratic $U$; and the constant $M_2$ grows like $d$ for general $U$ and $d^{1/2}$ for quadratic $U$.  For mean-field $U$, the corresponding upper bound on $\TV{\mu}{\tilde{\mu} }$ grows like $O(d h^2)$.  Applying the triangle inequality again, and inserting \eqref{eq:uHMC_tv} and \eqref{eq:uHMC_im} gives an overall convergence bound on $\TV{\mu}{\nu \tilde{\pi}^{m+1}}$. These results are stated  precisely in Theorems \ref{thm:tv_to_equilibrium} and
\ref{thm:tv_IM_accuracy}, and Corollary~\ref{cor:tv_to_target} in the next section for general $U$, and in Theorems \ref{thm:tv_to_equilibrium_mf} and
\ref{thm:tv_IM_accuracy_mf}, and Corollary~\ref{cor:tv_to_target_mf} for mean-field $U$.  The remaining sections contain detailed proofs.  In the special case $T=h$, i.e., one integration step per HMC step, one recovers TV convergence bounds for the unadjusted Langevin algorithm (uLA) \cite{durmus2019}.  
In this case, \eqref{eq:uHMC_tv} shows that the mixing time again depends logarithmically on  $d$, while \eqref{eq:uHMC_im} shows that the accuracy of the invariant measure is $O(d h)$ for general $U$.  
  
  \medskip
  
We conclude this introduction by remarking that mixing time bounds based on coupling methods might be relevant to recently developed \emph{unbiased} estimators based on couplings  \cite{heng2019unbiased}.  Both in theory and in practice, the usefulness of these unbiased estimators requires a successful coupling that realizes these bounds.  Therefore, to understand the performance of these unbiased estimators, it is crucial to obtain quantitative TV convergence bounds and geometric tail bounds on the corresponding coupling times.





\section{Main Results}

\subsection{Notation}

Let $\mathcal{P}(\mathbb{R}^d)$ denote the set of probability measures on $\mathbb{R}^d$.   Define the total variation (TV) distance between $\nu, \eta \in \mathcal{P}(\mathbb{R}^d)$  by \[
\TV{\nu}{\eta} \ := \ \sup \{ |\nu(A) - \eta(A)| : A \in \mathcal{B}(\mathbb{R}^d) \}
\] where $\mathcal{B}(\mathbb{R}^d)$ is the Borel $\sigma$-algebra on $\mathbb{R}^d$. Denote the set of all couplings of $\nu, \eta \in \mathcal{P}(\mathbb{R}^d)$ by $\J(\nu,\eta)$. A useful property of the TV distance is the following coupling characterization \begin{equation} \label{eq:tv_coupling}
\TV{\nu}{\eta} \ = \ \inf\Big\{P[X\neq Y]~:~ \law(X, Y) \in \J(\nu,\eta) \Big\} \;.
\end{equation}  
Let $\mathsf{d}$ be a metric on $\mathbb{R}^d$.
For $\nu,\eta \in \mathcal{P}(\mathbb{R}^d)$,
define the $L^1$-Wasserstein distance with respect to $\mathsf{d}$ by
\[
\W_{\mathsf{d}}^1(\nu,\eta) \ := \ \inf \Big\{ E\left[\mathsf{d}(X,Y)  \right] ~:~ \law(X, Y) \in \J(\nu,\eta) \Big\} \;.
\] In the special case where $\mathsf{d}$ is the standard Euclidean metric, we write the corresponding $L^1$-Wasserstein distance as $\W^1$.

\subsection{Short Overview of Unadjusted Hamiltonian Monte Carlo}

Unadjusted Hamiltonian Monte Carlo (uHMC) is an MCMC method for approximate sampling from a `target' probability distribution on $\mathbb{R}^d$ of the form
\begin{equation}\label{eq:mu}
\mu (dx)= \mathcal Z^{-1}\,\exp (-U(x))\, dx \;, \quad \text{$\mathcal Z=
\int\exp (-U(x))\,dx$}  \;,
\end{equation}
where $U: \mathbb{R}^{d} \to \mathbb{R}$ is
a twice continuously differentiable function
such that $\mathcal Z<\infty$. The function $U$ is
interpreted as a potential energy.  
The uHMC algorithm generates a Markov chain on $\mathbb{R}^d$ with the help of: (i) a sequence $(\xi_k)_{k \in \mathbb{N}_0}$ of i.i.d.~random variables $\xi_k \sim \mathcal N(0,I_d)$; and (ii) the velocity Verlet integrator with time step size $h > 0$ and initial condition $(x,v) \in \mathbb{R}^{2d}$ whose discrete solution takes values on an evenly spaced temporal grid $\{ t_i := i h \}_{i \in \mathbb{N}_0}$ and is interpolated by $(\tilde{q}_t(x,v), \tilde{v}_t(x,v))$ which satisfies the following differential equations
\begin{equation}
\label{velVerlet}
	\frac{d}{dt} \tilde{q}_t   \, = \,   \tilde{v}_{\lb{t}} - (t-\lb{t})  \nabla U(\tilde{q}_{\lb{t}})  , \qquad   \frac{d}{dt} \tilde{v}_t \, = \, -\frac{1}{2}\left(\nabla
	U(\tilde{q}_{\lb{t}}) + \nabla U(\tilde{q}_{\ub{t}})\right)
\end{equation}
with initial condition $(\tilde{q}_0(x,v),\tilde{v}_0(x,v)) = (x,v) \in \mathbb{R}^{2d}$. Here we have defined \begin{equation}
\label{eq:round}
\lb{t}= \max\{ s\in h\Z \ : \ s\leq t \} \mbox{\quad and\quad} \ub{t}= \min\{ s\in
h\Z \ : \ s\geq t \} \quad \text{ for $h>0$} \;.
\end{equation}
For $h=0$, we set $\lb{t}=\ub{t}=t$ and drop the tildes in the notation. Thus
\begin{equation}
    \label{eq:Hamflow}
     \frac{d}{dt} {q}_t  \,  =\,     {v}_t ,\qquad  \frac{d}{dt} {v}_t \,  =\, - \nabla
	U({q}_t),
\end{equation}
and
correspondingly, $({q}_t(x,v), {v}_t(x,v))$ is the exact Hamiltonian flow
with respect to the unit mass Hamiltonian $H(x,v) = (1/2) \norm{v}^2 + U(x)$.\smallskip 
\\
\begin{minipage}{\textwidth}
\begin{minipage}{0.50\textwidth}
\noindent
By integrating \eqref{velVerlet}, note that $\tilde{q}_t$ is a piecewise quadratic function of time that interpolates between the points $\{ (t_k, \tilde{q}_{t_k}) \}$ and satisfies 
$\left. \frac{d}{dt} \tilde{q}_t  \right|_{t=t_k+} = \tilde{v}_{t_k}$, while $\tilde{v}_t$ is a piecewise linear function of time that interpolates between the points $\{ (t_k, \tilde{v}_{t_k} ) \}$.  In general, $\tilde{q}_t$ does not satisfy $\left. \frac{d}{dt} \tilde{q}_t  \right|_{t=t_k+} = 
\left. \frac{d}{dt} \tilde{q}_t  \right|_{t=t_k-}$, as illustrated in the figure.
In the analysis of the convergence properties 
of uHMC, this continuous-time interpolation of the discrete solution produced by velocity Verlet is
convenient to work with.

\end{minipage}
\begin{minipage}{0.48\textwidth}
\begin{center}
\begin{tabular}{c}
\includegraphics[width=\textwidth]{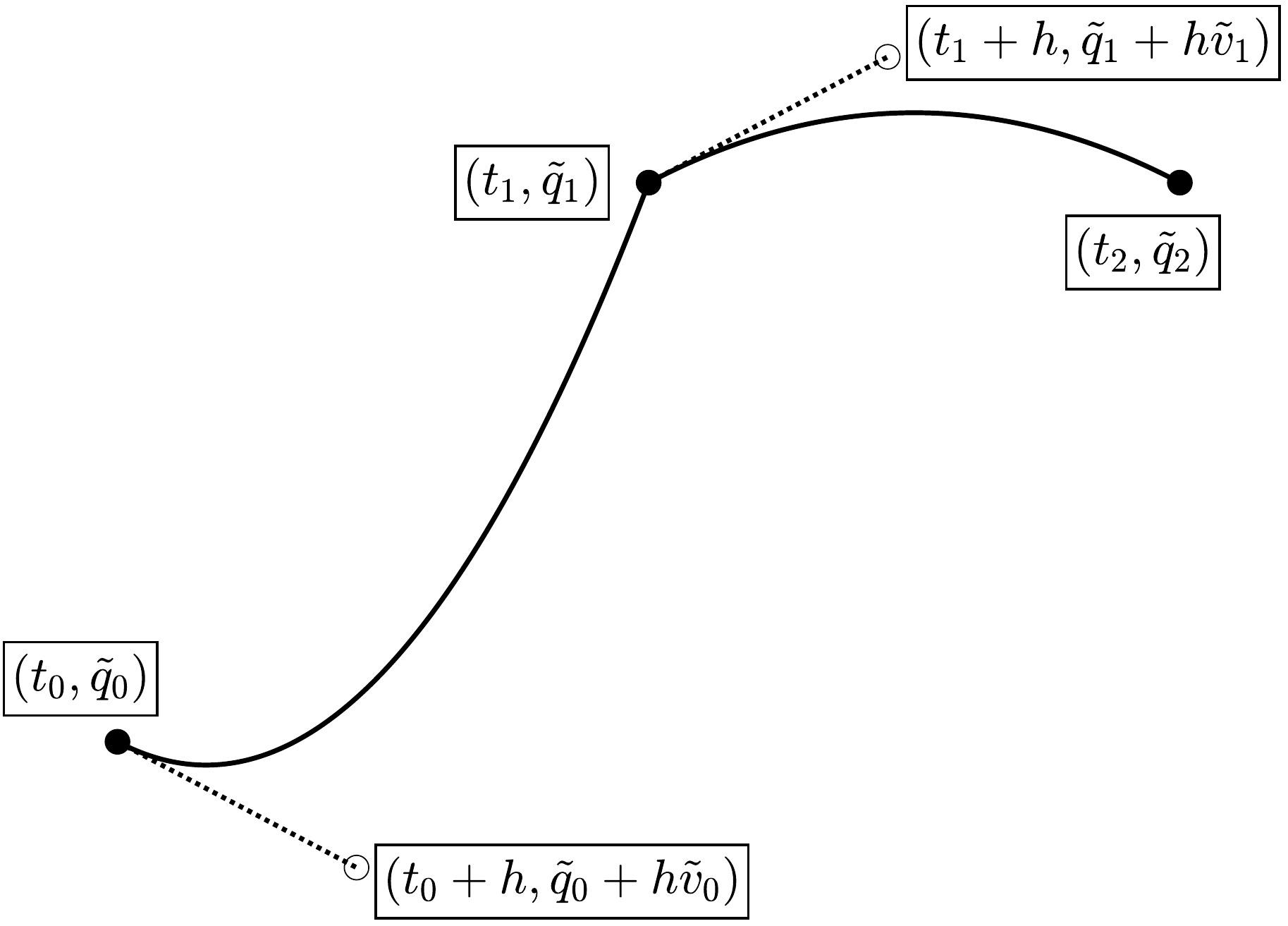} \\
{\em piecewise quadratic} \\ {\em interpolation of positions}
\end{tabular}
\end{center}
\end{minipage}
\end{minipage}

\smallskip

In the $n$-th step of the \emph{unadjusted Hamiltonian Monte Carlo algorithm with complete velocity refreshment and duration parameter $T\in (0,\infty )$}, the initial velocity $\xi_{n-1}$ is sampled independently of the previous development, and the current position and initial velocity are evolved by applying the Verlet approximation of the Hamiltonian flow over a 
time interval of length $T$.

\begin{defn}[\textbf{uHMC Markov chain}]
Given an initial state $x \in \mathbb{R}^d$, duration $T>0$, and time step size $h \ge 0$ with $T/h \in \mathbb{Z}$ for $h \ne 0$, define $\tilde{X}_0(x) := x$ and \[
\tilde{X}_n(x) := \tilde{q}_T( \tilde{X}_{n-1}(x), \xi_{n-1} ) \quad \text{for $n \in \mathbb{N}$} \;.
\] 
Let $\tilde{\pi}(x,A) = P[ \tilde{X}_1(x) \in A ]$ denote the corresponding one-step transition kernel.  
\end{defn}

For $h=0$, we recover the \emph{exact Hamiltonian Monte Carlo algorithm}. In this case, we drop all tildes in the notation, i.e., the $n$-th transition step is denoted by $X_n(x)$, and the corresponding transition kernel is denoted by $\pi$. The target measure $\mu$ is invariant under $\pi$, because the Hamiltonian flow \eqref{eq:Hamflow} preserves the probability measure on $\mathbb R^{2d}$ with density proportional to $\exp (-H(x,v))$, and $\mu$ is the first marginal of this measure. When $h>0$, the invariant probability measure for $\tilde{\pi}$ is denoted by $\tilde \mu$. In general, it does not agree with $\mu$, but when it exists, it typically approaches $\mu$ as $h\downarrow 0$.

\subsection{Assumptions}

Let $\mathbf{H} \equiv \differential^2 U$ denote the Hessian of $U$.
To prove our main results, we assume:

\begin{assumption} 
\label{A1234} The function $U: \mathbb{R}^d \to \mathbb{R}$ satisfies the following conditions:
\begin{itemize}
\item[(A1)] $U$ has a global minimum at $0$ and $U(0)=0$.
\item[(A2)] $U \in \mathcal{C}^2(\mathbb{R}^d)$ with bounded second derivative, i.e., $L:=\sup \mnorm{ \mathbf{H} } < \infty$.
\item[(A3)] $U \in \mathcal{C}^3(\mathbb{R}^d)$ with bounded third derivative, i.e., $L_H :=\sup \mnorm{ \differential\mathbf{H} } < \infty$.
\item[(A4)] $U \in \mathcal{C}^4(\mathbb{R}^d)$ with bounded fourth derivative, i.e., $L_I :=\sup \mnorm{ \differential^2\mathbf{H} } < \infty$.
\end{itemize}
Here $\mnorm{\cdot}$ denotes the operator norm of a multilinear form. 
\end{assumption}

\begin{rem}[Choice of norm and dimension dependence]
The dimension dependence of the subsequent results depends on the choice of norm used in Assumptions (A2)-(A4).
By the Cauchy-Schwarz inequality, note that the operator norm $\mnorm{\cdot}$ of a $k$-multilinear form $A = (a_{i_1 \cdots i_k})$ is always bounded by the Frobenius (or Hilbert-Schmidt) norm that is defined by $\mnorm{A}_F=(\sum_{i_1,\dots,i_k=1}^{d,\dots,d} a_{i_1 \cdots i_k}^2)^{1/2}$, i.e.,  $\mnorm{A} \le \mnorm{A}_F$, see \cite[Section 2]{friedland2018nuclear}. 
Choosing the Frobenius norm in Assumptions (A2)-(A4) would give an improved dimension dependence in the results below.
\end{rem}

We additionally assume that the transition kernel $\tilde{\pi}$ of uHMC satisfies the following $L^1$-Wasserstein convergence bound and discretization error bound.

\begin{assumption} 
\label{A56}
There exists an invariant probability measure $\tilde \mu$ of $\tilde{\pi}$ satisfying the following conditions.
\begin{itemize}
\item[(A5)] There exist constants $M_1,c\in (0,\infty )$ such that
for any $m \in \mathbb{N}$ and $\nu \in \mathcal{P}(\mathbb{R}^d)$,  $\W^1(\nu \tilde{\pi}^m, \tilde{\mu}) \, \le \, M_1 \, e^{-c m} \, \W^1(\nu, \tilde\mu )$. 
\item[(A6)]
There exists a constant $M_2\in (0,\infty )$ such that $\W^1(\mu, \tilde{\mu}) \, \le \, M_2 \, h^2$.
\end{itemize}
\end{assumption}
Note that by (A5), the invariant probability measure is unique.
Assumption (A6) often holds as a consequence of (A5), a triangle-inequality trick \cite[Remark 6.3]{mattingly2010convergence} and strong accuracy of the Verlet integrator; see~Section 3.2 of \cite{BoSc2020}. 
In general, the constants $M_1$, $M_2$ and $c$ appearing in (A5) and (A6) will depend on the dimension $d$, but usually they can be chosen independently of the discretization step size $h$. In Section \ref{sec:examples}, we will see 
several classes of examples where $c$ can be chosen independently of $d$, and the dimension
dependence of $M_1$ and $M_2$ is explicit.

\subsection{TV convergence bounds for uHMC}

We are now in position to state our main results.
For all of the following results, we assume that
Assumptions~\ref{A1234} and \ref{A56} are satisfied. Moreover, we fix a duration parameter $T>0$ and a time step size $h>0$ such that
$T/h\in\mathbb N$ and 
\begin{equation}
    \label{eq:Tconstraint}
    L(T^2 + T h) \ \le\ 1/6 \, .
\end{equation}

\begin{thm} \label{thm:tv_to_equilibrium}
For any $m \in \mathbb{N}_0$ and for any initial law
$\nu\in\mathcal P(\mathbb R^d)$,
\begin{equation} \label{eq:tv_to_equilibrium}
\TV{\tilde \mu}{\nu \tilde{\pi}^{m+1}} \ \le \  
(3/2) \, \left( T^{-2}  + 27 \, d \, L_H^2 \, T^4 \right)^{1/2} \, M_1 \, \W^1(\tilde \mu, \nu) \, e^{-c \, m}  \;.   
\end{equation}
\end{thm}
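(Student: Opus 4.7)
The plan is to combine the $\W^1$ contraction from Assumption~(A5) with a one-step $\W^1$-to-TV regularization bound for the uHMC transition kernel, namely Lemma~\ref{lem:regularization} announced in the introduction,
\begin{equation*}
\TV{\mu\tilde\pi}{\nu\tilde\pi} \ \le\ (3/2)\left(T^{-2}+27\, d\, L_H^2\, T^4\right)^{1/2}\W^1(\mu,\nu), \qquad \mu,\nu\in\mathcal P(\mathbb R^d).
\end{equation*}
Once this is in place, the theorem is essentially immediate: apply the regularization bound to the pair $(\tilde\mu,\nu\tilde\pi^m)$, use the invariance $\tilde\mu\tilde\pi=\tilde\mu$ to identify $\TV{\tilde\mu\tilde\pi}{(\nu\tilde\pi^m)\tilde\pi}=\TV{\tilde\mu}{\nu\tilde\pi^{m+1}}$, and then apply Assumption~(A5) to dominate $\W^1(\tilde\mu,\nu\tilde\pi^m)\le M_1\, e^{-cm}\,\W^1(\tilde\mu,\nu)$. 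Concatenating these three steps gives exactly the claimed bound.

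The substantive work is the regularization bound itself, which I would prove via a one-shot coupling exploiting the complete velocity refreshment at the start of each uHMC step. Writing $F_x(\xi):=\tilde q_T(x,\xi)$, the laws $\tilde\pi(x,\cdot)$ and $\tilde\pi(y,\cdot)$ are the pushforwards of $\mathcal N(0,I_d)$ under $F_x$ and $F_y$, respectively. For fixed $x,y$, I would construct, for each realization of $\xi$, a partner velocity $\xi'=\Phi_{x,y}(\xi)$ solving $F_y(\xi')=F_x(\xi)$; existence and smoothness of $\Phi_{x,y}$ follow from the step-size restriction \eqref{eq:Tconstraint} and the boundedness of the Hessian $\mathbf{H}$, via an implicit function argument on the Verlet flow. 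A Taylor expansion in the spatial difference yields the leading-order shift $\xi'-\xi\approx (x-y)/T$ together with a Hessian-driven correction of order $T^2\, L_H\,\|\xi\|\,\|x-y\|$, where $L_H$ is supplied by Assumption~(A3). Taking expectations of the squared shift under $\mathcal N(0,I_d)$ produces the two terms inside the square root (the factor $d$ enters through $\E\|\xi\|^2=d$), and a Pinsker-type estimate converts the mean-squared shift into the pointwise TV bound on $\TV{\tilde\pi(x,\cdot)}{\tilde\pi(y,\cdot)}$. The $\W^1$ form is then obtained routinely by applying the one-shot coupling conditionally on a $\W^1$-optimal coupling of $(\mu,\nu)$ and taking expectations.

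The main obstacle I anticipate is the careful bookkeeping required in the one-shot coupling when the Verlet flow comprises $T/h$ elementary steps: quantifying the leading-order shift and the Hessian correction precisely enough to recover the explicit constants $T^{-2}$ and $27\, d\, L_H^2\, T^4$, and controlling the Jacobian $\det D\Phi_{x,y}$ so that the change-of-variables piece does not spoil the TV estimate. The restriction \eqref{eq:Tconstraint} is precisely what keeps the linearization of the Verlet flow uniformly invertible with explicit bounds, making the whole analysis quantitative. Once the regularization lemma is established, the present theorem reduces to the two-line combination of invariance, triangle inequality, and the $\W^1$ contraction (A5) sketched above.
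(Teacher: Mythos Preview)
Your proposal is correct and follows essentially the same approach as the paper: the theorem is obtained by applying the $\W^1$-to-TV regularization Lemma~\ref{lem:regularization} to the pair $(\tilde\mu,\nu\tilde\pi^m)$, using the invariance $\tilde\mu\tilde\pi=\tilde\mu$, and then invoking Assumption~(A5). Your sketch of the regularization lemma itself also matches the paper's strategy (one-shot coupling, Pinsker's inequality, and separate control of $\|\Phi(\xi)-\xi\|$ and $\|D\Phi(\xi)-I_d\|$ via Lemmas~\ref{lem:oneshot:1:a} and~\ref{lem:oneshot:1:b}).
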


\begin{proof} For $m=0$, the statement is a 
special case of a $\W^1 $/TV regularization
result that is proven in
Lemma~\ref{lem:regularization} below. The general case follows by combining Lemma~\ref{lem:regularization} and Assumption~\ref{A56} (A5).
\end{proof}

Theorem~\ref{thm:tv_to_equilibrium} immediately implies an upper bound on the $\varepsilon$-mixing time 
$t_{\mathrm{mix}}(\varepsilon ,\nu ):=
\inf\left\{ m\ge 0:\TV{\tilde \mu}{\nu \tilde{\pi}^{m}} \le \varepsilon\right\}$
of the uHMC Markov chain.

\begin{cor}[Upper bound for mixing time]\label{cor:mixingtime}
For any $\varepsilon>0$ and any
$\nu\in\mathcal P(\mathbb R^d)$, 
\[
t_{\mathrm{mix}}(\varepsilon ,\nu )\ \le\  2 + \frac{1}{c}  \log\Big( \frac{3 \left( T^{-2}  + 27 \, d \, L_H^2 \, T^4 \right)^{1/2} \, M_1 \, \W^1(\tilde \mu, \nu)}{2 \varepsilon} \Big) \;.
\] 
\end{cor}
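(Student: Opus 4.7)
The plan is to derive the mixing time bound directly from Theorem~\ref{thm:tv_to_equilibrium} by solving the exponential inequality for the smallest admissible iteration count. Write $C_1 := (3/2) \left( T^{-2} + 27\, d\, L_H^2\, T^4 \right)^{1/2} M_1 \, \W^1(\tilde\mu,\nu)$, so that Theorem~\ref{thm:tv_to_equilibrium} reads $\TV{\tilde\mu}{\nu\tilde\pi^{k+1}} \le C_1 e^{-ck}$ for every $k \in \mathbb{N}_0$.

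The first step is to determine when the right-hand side falls below $\varepsilon$. Solving $C_1 e^{-ck} \le \varepsilon$ gives $k \ge c^{-1}\log(C_1/\varepsilon)$, so it suffices to choose
\[
k_\star \ =\ \max\left\{ 0,\ \left\lceil c^{-1}\log(C_1/\varepsilon) \right\rceil \right\}.
\]
By the theorem, $\TV{\tilde\mu}{\nu\tilde\pi^{k_\star+1}} \le \varepsilon$, so by the definition of the mixing time, $t_{\mathrm{mix}}(\varepsilon,\nu) \le k_\star+1$.

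The second step is a trivial estimate: $\lceil c^{-1}\log(C_1/\varepsilon)\rceil \le c^{-1}\log(C_1/\varepsilon) + 1$, hence $k_\star+1 \le c^{-1}\log(C_1/\varepsilon) + 2$. Substituting the definition of $C_1$ yields precisely the stated bound. Note that when $\log(C_1/\varepsilon) < 0$ the logarithm on the right-hand side is negative but the constant $2$ absorbs this case, since $t_{\mathrm{mix}}(\varepsilon,\nu) \le 1$ already whenever $C_1 \le \varepsilon$ (take $k_\star=0$ in the argument above).

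There is no real obstacle here; the content is entirely contained in Theorem~\ref{thm:tv_to_equilibrium}, and the corollary is obtained by inverting the exponential rate and absorbing the ceiling into the additive constant $2$. The only care needed is to track the index shift $m\mapsto m+1$ in the statement of Theorem~\ref{thm:tv_to_equilibrium} versus the definition of $t_{\mathrm{mix}}$, which is what produces the additive $+2$ rather than $+1$.
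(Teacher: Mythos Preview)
Your proof is correct and is exactly the natural derivation the paper has in mind; the paper itself gives no explicit proof, stating only that the corollary follows ``immediately'' from Theorem~\ref{thm:tv_to_equilibrium}, and your argument is the straightforward inversion of the exponential bound together with the index shift $m\mapsto m+1$. The only very minor quibble is your edge-case remark: when $\log(C_1/\varepsilon)$ is sufficiently negative the right-hand side of the corollary can drop below $1$, so the constant $2$ does not literally ``absorb'' every such case---but this is a cosmetic issue with the statement of the corollary itself (which is only meaningful when the bound is at least $1$), not with your proof.
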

In particular, provided that $M_1$ and $\W^1(\tilde \mu, \nu)$ depend at most polynomially on $d$ and $c$ is independent of $d$, then the mixing time depends at most logarithmically on $d$ and $\varepsilon$. 
Since simulating one step of the uHMC chain
requires carrying out $T/h$ gradient evaluations,
an $\varepsilon$-approximation of $\tilde\mu$ 
in total variation distance can be achieved with 
$O\left(h^{-1}\log (d/\varepsilon )\right)$
gradient evaluations. 
However,
in order to control the approximation error with respect to the target distribution $\mu$, we also
have to take into account the systematic error
$\TV{\mu}{\tilde \mu}$.

\begin{thm} \label{thm:tv_IM_accuracy}
Under the assumptions made above,
\begin{align} \label{eq:tv_IM_accuracy} 
&\TV{\mu}{\tilde \mu} \ \le \  h^2 \, \Big[ (3/2)  \, \left( T^{-2}  + 27 \, d \, L_H^2 \, T^4 \right)^{1/2}  \, M_2  +  C \Big] ,\;\quad\text{where} \\
\begin{split} \label{eq:C}
& C \ := \ \Big[  d^3 \big( 4 L_I^2 T^4 + 14 L_H^2 L_I T^6 + 14 L_H^4 T^8 \big)  \\
& \qquad+ d^2 \big( 35 L_H^2 T^2 + 8 L_I^2 T^4 + 28 L_H^2 L_I T^6 + 28 L_H^4 T^8 \big)  \\
& \qquad+ d \big( 16 L^2 + 4 L_H^2 T^2) + \big(2 d L_H^2  + L^2 T^{-2}  \big) \int \norm{x}^2 \mu(dx)  \\ 
& \qquad + \big( d L_I^2 + d L_H^2 L_I T^2 + d L_H^4 T^4 + L_H^2 T^{-2} \big)  \int \norm{x}^4 \mu(dx)  \Big]^{1/2}  \;.
\end{split}
\end{align} 
\end{thm}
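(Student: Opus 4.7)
The plan follows the outline in the introduction: split $\TV{\mu}{\tilde\mu}$ by the triangle inequality into a \emph{regularization} term and a \emph{strong accuracy} term, and bound each by a separate one-shot coupling. Using invariance $\mu\pi=\mu$ and $\tilde\mu\tilde\pi=\tilde\mu$,
\begin{equation*}
\TV{\mu}{\tilde\mu}\;=\;\TV{\mu\pi}{\tilde\mu\tilde\pi}\;\le\;\TV{\mu\pi}{\mu\tilde\pi}\;+\;\TV{\mu\tilde\pi}{\tilde\mu\tilde\pi}.
\end{equation*}

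The second term is immediate from the $\W^1/\mathrm{TV}$ regularization bound of Lemma \ref{lem:regularization} applied to the pair of initial laws $(\mu,\tilde\mu)$: this produces the prefactor $(3/2)(T^{-2}+27\,d\,L_H^2 T^4)^{1/2}$ multiplied by $\W^1(\mu,\tilde\mu)$, and Assumption (A6) then yields $\W^1(\mu,\tilde\mu)\le M_2 h^2$. Combined, this is exactly the $M_2$ contribution on the right-hand side of \eqref{eq:tv_IM_accuracy}.

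The substantive work is the estimate $\TV{\mu\pi}{\mu\tilde\pi}\le C h^2$. Both measures are one-step pushforwards of the \emph{same} initial distribution $\mu$, so I would use a synchronous one-shot coupling: sample $X_0\sim\mu$ together with a common Gaussian velocity $\xi\sim\mathcal N(0,I_d)$, and compare the endpoints $q_T(X_0,\xi)$ of the exact Hamiltonian flow with $\tilde q_T(X_0,\xi)$ of velocity Verlet. Conditionally on $X_0$, both endpoints are smooth functions of the same Gaussian vector $\xi$, and Lemma \ref{lem:overlap2} (the Gaussian-overlap bound underlying Lemma \ref{lem:tv_strong_error}) bounds the conditional TV distance between their laws by a constant times the conditional $L^2$-norm of the flow difference $q_T(X_0,\xi)-\tilde q_T(X_0,\xi)$. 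Averaging over $X_0\sim\mu$ and applying Jensen then reduces matters to a strong Verlet error estimate of the form
\begin{equation*}
\mathbb E\bigl[\|q_T(X_0,\xi)-\tilde q_T(X_0,\xi)\|^2\bigr]^{1/2}\;\le\;C\,h^2
\end{equation*}
with $C$ as in \eqref{eq:C}.

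The hardest step is the quantitative strong-error analysis that produces the explicit constant $C$. A Taylor expansion of the local one-step Verlet error involves derivatives of $U$ up to order four, which is why $L$, $L_H$ and $L_I$ all enter; propagating these local errors through the $T/h$ Verlet sub-steps via a discrete Gr\"onwall-type argument generates the polynomial factors in $T$ up to $T^4$ visible in \eqref{eq:C}. The dimension dependence up to $d^3$ comes from contractions of the second, third, and fourth derivative tensors with $\xi$, $\xi\otimes\xi$ and $\xi\otimes\xi\otimes\xi$, each such contraction contributing a factor of $d^{1/2}$ after taking $L^2$ norms. Finally, the moments $\int\|x\|^2\mu(dx)$ and $\int\|x\|^4\mu(dx)$ in $C$ arise because the higher derivatives of $U$ are evaluated along the exact Hamiltonian trajectory issuing from $(X_0,\xi)$, whose position norm is dominated by $\|X_0\|$ plus $T\|\xi\|$. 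Putting the strong-error bound together with the regularization term from the previous paragraph yields \eqref{eq:tv_IM_accuracy}.
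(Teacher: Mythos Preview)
Your high-level decomposition matches the paper exactly: the same triangle inequality, the same use of Lemma~\ref{lem:regularization} together with (A6) for the second term. The gap is in your treatment of the first term $\TV{\mu\pi}{\mu\tilde\pi}$.

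You claim that the conditional TV distance between the laws of $q_T(x,\xi)$ and $\tilde q_T(x,\xi)$ is controlled by the $L^2$-norm of the position error $q_T-\tilde q_T$. This is not what Lemma~\ref{lem:overlap2} does, and it cannot work as stated: both endpoints are deterministic functions of the \emph{same} Gaussian $\xi$, so the synchronous coupling $(\xi,\xi)$ gives no handle on the TV distance between their laws. The paper instead constructs (via Corollary~\ref{cor:MuOr2004}) the map $\Phi$ satisfying $q_T(x,\Phi(v))=\tilde q_T(x,v)$ and bounds $\TV{\law(\xi)}{\law(\Phi(\xi))}$ through Lemma~\ref{lem:overlap}, which requires control on \emph{both} $\norm{\Phi(\xi)-\xi}$ \emph{and} the Jacobian defect $\mnorm{D\Phi(\xi)-I_d}_F$. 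The bound on $\norm{\Phi(v)-v}$ (Lemma~\ref{lem:oneshot:2:a}) does indeed reduce to the Verlet position error, and this is where $L$ and $L_H$ enter; but the Jacobian piece requires a separate estimate on the \emph{velocity-derivative} error $\mnorm{D_2 q_s-D_2\tilde q_s}$ (Lemmas~\ref{lem:verlet_error:2} and~\ref{lem:oneshot:2:b}), and \emph{this} is where $L_I$ appears --- not through a higher-order Taylor expansion of the position error as you suggest. Likewise, the $d^3$ does not come from triple tensor contractions with $\xi$: it arises because one passes from the Frobenius to the operator norm via $\mnorm{D\Phi-I_d}_F^2\le d\,\mnorm{D\Phi-I_d}^2$, and the operator-norm bound in Lemma~\ref{lem:oneshot:2:b} carries a $\norm{\xi}^4$ term whose expectation is of order $d^2$.
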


\begin{proof}
By the triangle inequality, \begin{equation} \label{tri::ineq}
\TV{\mu}{\tilde \mu} \ \le \ \TV{\mu \pi}{\mu \tilde \pi} +  \TV{\mu \tilde \pi}{\tilde \mu \tilde \pi} \;.
\end{equation}
Moreover,  by Lemma~\ref{lem:tv_strong_error} below, \begin{align}
& \TV{\mu \pi}{\mu \tilde \pi} \ \le \ C\,   h^2  \;. \label{mupi_mutpi}
\end{align}
Moreover, by Lemma~\ref{lem:regularization}, and by Assumption~\ref{A56} (A6), \begin{align}
\TV{\mu \tilde \pi}{\tilde \mu \tilde \pi} &\ \le\  (3/2) \, \left( T^{-2}  + 27 \, d \, L_H^2 \, T^4 \right)^{1/2}  \, \W^1(\mu, \tilde \mu) \nonumber \\
&\ \le\ (3/2)  \, \left( T^{-2}  + 27 \, d \, L_H^2 \, T^4 \right)^{1/2} \, M_2 \, h^2 \;. \label{mutpi_tmutpi}
\end{align}
Inserting \eqref{mutpi_tmutpi} and \eqref{mupi_mutpi} into \eqref{tri::ineq} gives \eqref{eq:tv_IM_accuracy}.  
\end{proof}

By the triangle inequality, note that \begin{equation}
\label{tri:::ineq}
\TV{\nu \tilde{\pi}^{m+1}}{\mu}\ \le\ \TV{\nu \tilde{\pi}^{m+1}}{\tilde \mu} + \TV{\tilde \mu}{\mu} \;.
\end{equation} Inserting \eqref{eq:tv_IM_accuracy} and \eqref{eq:tv_to_equilibrium} into \eqref{tri:::ineq} gives the following corollary.

\begin{cor} \label{cor:tv_to_target}
For any $m \in \mathbb{N}_0$ and any initial law $\nu\in \mathcal P(\mathbb R^d)$,
\begin{equation} \label{eq:tv_to_target}
\begin{aligned}
 \TV{\mu}{\nu \tilde{\pi}^{m+1}} \, &\le \,  
(3/2) \, \left( T^{-2}  + 27 \, d \, L_H^2 \, T^4 \right)^{1/2} \, M_1 \, \W^1(\tilde \mu, \nu) \, e^{-c \, m}  \\
& \quad + h^2 \, \Big[ (3/2) \, \left( T^{-2}  + 27 \, d \, L_H^2 \, T^4 \right)^{1/2}   \, M_2  + C\Big] .
\end{aligned}
\end{equation}
\end{cor}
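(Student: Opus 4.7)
The plan is to prove Corollary \ref{cor:tv_to_target} as a direct consequence of Theorems \ref{thm:tv_to_equilibrium} and \ref{thm:tv_IM_accuracy}, glued together by the triangle inequality displayed in \eqref{tri:::ineq}. The first step is to split
\begin{equation*}
\TV{\mu}{\nu \tilde{\pi}^{m+1}} \ \le\  \TV{\nu \tilde{\pi}^{m+1}}{\tilde \mu} \,+\, \TV{\tilde \mu}{\mu},
\end{equation*}
which neatly separates the transient mixing error of the uHMC chain to its own invariant law from the systematic bias between $\tilde\mu$ and the target $\mu$.

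Next I would bound each of the two pieces using the results already in hand. The first term is the $m\mapsto m{+}1$ instance of the bound in Theorem \ref{thm:tv_to_equilibrium}, contributing the exponentially decaying term
$(3/2)\,\left(T^{-2} + 27\,d\,L_H^2\,T^4\right)^{1/2} M_1 \,\W^1(\tilde\mu,\nu)\, e^{-cm}$.
The second term is exactly the stationary bias controlled by Theorem \ref{thm:tv_IM_accuracy}, contributing the $O(h^2)$ term $h^2\bigl[(3/2)\left(T^{-2}+27\,d\,L_H^2\,T^4\right)^{1/2} M_2 + C\bigr]$ with $C$ as in \eqref{eq:C}. Adding these two contributions yields \eqref{eq:tv_to_target} verbatim.

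Because the substantive work — constructing the one-shot coupling that yields the $\W^1$/TV regularization bound (Lemma \ref{lem:regularization}), and the strong-accuracy-based TV bias estimate (Lemma \ref{lem:tv_strong_error}) — is carried out in the two theorems being combined, there is no genuine obstacle in the corollary itself; the only items to double-check are that the standing step-size hypothesis \eqref{eq:Tconstraint} and the contraction/accuracy Assumptions \ref{A56}~(A5)--(A6) are in force for both ingredients, which they are by the blanket assumptions of Section 2.4. Thus the proof is a two-line triangle-inequality argument followed by substitution of \eqref{eq:tv_to_equilibrium} and \eqref{eq:tv_IM_accuracy}.
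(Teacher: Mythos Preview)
Your proposal is correct and matches the paper's own argument essentially line for line: the paper also applies the triangle inequality \eqref{tri:::ineq} and then substitutes \eqref{eq:tv_to_equilibrium} and \eqref{eq:tv_IM_accuracy} to obtain \eqref{eq:tv_to_target}. There is nothing to add.
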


The constant $C$ appearing in \eqref{eq:tv_to_target} is typically of order $O(d^{3/2})$, although in the Gaussian case the
dimension dependence improves further. Below, we will see examples where the constant $M_2$ appearing in \eqref{eq:tv_to_target} is 
of order $O(d)$. Then the TV accuracy
$\TV{\mu}{\tilde \mu}$ is at most of order
$O(h^2d^{3/2})$, and correspondingly, for an
$\varepsilon$-accurate approximation of $\mu$
by $\tilde\mu$, the step size $h$ has to be chosen 
of order $O(d^{-3/4}\varepsilon^{1/2})$. Thus if additionally,
$c$ is independent of the dimension, and $M_1$ depends at most polynomially on $d$, then an $\varepsilon$-accurate approximation of the target distribution $\mu$ 
in TV distance can be achieved 
by uHMC with 
$O\left(d^{3/4}\varepsilon^{-1/2}\log (d/\varepsilon )\right)$
gradient evaluations.

\subsection{Examples}\label{sec:examples}

  By coupling methods, (A5) has been verified in the following models.



\subsubsection{Asymptotically Strongly Logconcave Target}
\label{ex:asymp_cvx_A5}
Suppose that $U: \mathbb{R}^d \to \mathbb{R}$ satisfies (A1) and (A2), and $U$ is strongly convex outside a Euclidean ball, i.e., there exist constants 
$\mathcal{R} \in [0,\infty)$ and $K \in (0, \infty)$ such that
\[    (x-y) \cdot ( \nabla U(x)-\nabla U(y) )  \ \geq \   K \norm{x-y}^2  \quad \text{for all $x,y \in \mathbb{R}^d$ with  $\norm{x-y} \geq \mathcal{R}$.} \]
Using a synchronous coupling, and assuming that $T>0$ satisfies $L T^2 \le 1/4$, Chen and Vempala (2019) \cite{chen2019optimal} proved in the
globally strongly logconcave case (i.e., for $\mathcal R=0$) that 
for all initial distributions $\nu, \eta \in \mathcal{P}(\mathbb{R}^d)$, and for all $m\ge 0$, the transition kernel of exact HMC satisfies \begin{align*}
& \W^1 (\nu \pi^m ,\eta \pi^m )\ \le\  e^{- c  m } \, \W^1 (\nu ,\eta)\quad\text{where}  \quad c\ =\ K T^2 / 10.
\end{align*}  Mangoubi and Smith (2017) \cite{mangoubi2017rapid} obtained a similar result under $L T^2 \le \min(1/4, K/L)$.
Simple counterexamples demonstrate that a synchronous coupling is not contractive in the non-logconcave setting where $\mathcal R>0$.  Recently, a non-synchronous coupling tailored to HMC was introduced to obtain a corresponding result for non-convex potentials.
Suppose that $T >  0$ and $h_1 
\ge 0$ satisfy \[
L (T+h_1)^2 \le \min \left( \frac{3 K}{10 L},\frac{1}{4},\frac{3 K}{256 \cdot 5 \cdot 2^6\,L\mathcal R^2 (L+K)} \right) \quad \text{and} \quad 
h_1 \le \frac{K T}{525 L + 235 K} \;.
\]
Assuming that $h>0$ satisfies  $h \in [0, h_1]$,
Bou-Rabee, Eberle and Zimmer (2020) \cite{BoEbZi2020} essentially prove that
for all $m\ge 0$ and all $\nu, \eta \in \mathcal{P}(\mathbb{R}^d)$,
\begin{align*}
& \W^1 (\nu \tilde{\pi}^m ,\eta \tilde{\pi}^m )\ \le\ M_1 \, e^{- c m} \, \W^1 (\nu ,\eta) ,\qquad\text{where} \\
& M_1 \ = \ \exp\left( \frac{5}{2} ( 1 + \frac{4 \mathcal{R}}{T} \sqrt{\frac{L+K}{K}}) \right) \quad\text{and}\quad  c\ =\  \frac{K T^2}{156} \exp\left( -10 \frac{\mathcal{R}}{T} \sqrt{\frac{L+K}{K}} \right).
\end{align*}
Intuitively speaking, the factor $L \mathcal{R}^2$ measures the degree of non-convexity of $U$. 
The bounds show that (A5) is satisfied with
the explicit constants given above.  Now suppose additionally that
$U$ is in $C^3(\mathbb{R}^{d})$ and 
$L_H=\sup\|\differential^3U\|<\infty$. Then by Corollary 7  of \cite{BoSc2020} with $n=1$ and $\epsilon=0$, we have \[
\W^1(\mu, \tilde \mu)  \le
 \frac{1}{c}  \tilde{C}_2 M_1 \Big( d + \int   \norm{x} \mu(dx) + \int \norm{x}^2 \mu(dx) \Big) \; h^2 
\] where $\tilde{C}_2$ depends only on $K$, $L$, $L_H$ and T.  Therefore,  Assumption (A6) holds with \[
M_2 =  \frac{1}{c}  \tilde{C}_2 M_1 \Big( d + \int  \norm{x} \mu(dx) + \int  \norm{x}^2 \mu(dx) \Big) \;.\]
If the constants $\mathcal R$, $K$ and $L$ are fixed, then $c$ and $M_1$ do not depend on the
dimension. 
As a consequence, by Corollary \ref{cor:mixingtime}, the mixing time is of order $O(\log d)$, and by
Theorem \ref{thm:tv_IM_accuracy},
the TV accuracy $\TV{\mu}{\tilde \mu}$
is of order $O(d^{3/2}h^2)$. In particular, an $\varepsilon$-accurate approximation
can be achieved with a step size of order
$O(\varepsilon^{-1/2}h^{-3/4})$.
The latter bound is not sharp for strongly logconcave product models with i.i.d.\ factors where one
can prove by elementary methods that the correct order of $\TV{\mu}{\tilde \mu}$ is $\Theta (d^{1/2}h^2)$. In the general setup considered
above, however, we do not
expect a bound for TV accuracy
of a similar order to hold. In a complimentary work,
Durmus and Eberle \cite{DurmusEberle2021} show that
the $L^1$-Wasserstein accuracy $\W^1(\mu ,\tilde\mu )$
is of order
$O(d^{1/2}h^2)$ for ``nice'' models and 
of order $O(dh^2)$ for general models satisfying
the assumptions made above.
\smallskip

In general, we can not expect that
constants $\mathcal{R}$, $K$ and $L$ as above can be chosen independently
of the dimension $d$.
Then also $c$ and $M_1$
may depend implicitly on the dimension through these constants. For example, in mean-field models, which we consider next, $\mathcal{R}$ can increase with the number of particles; see Remark 1 of \cite{BoSc2020}.

\subsubsection{Mean-Field Model} \label{ex:mean_field}
Consider a mean-field model \cite{kac1956,KacProgram2013,BoSc2020} consisting of $n$ particles 
in dimension $k$ with potential energy $U: \mathbb{R}^{ nk} \to \mathbb{R}$ defined as \begin{equation}
U(x) \ = \ \sum_{i=1}^n \Big( V(x^i) + \frac{\epsilon}{n} \sum_{\ell=1, \ell \ne i}^n W(x^i - x^{\ell}) \Big) \;, \quad x \ = \ (x^1, \dots, x^n) \;, \quad x^i \in \mathbb{R}^k \;. \label{mf}
\end{equation} We assume that $V,W$ are functions in $C^2(\mathbb R^k)$ satisfying: 
\begin{itemize}
 	\item $V$ has a local minimum at $0$, and $V(0)=0$;
 	\item $L=\sup\|\differential^2V\|<\infty$ and $\tilde{L}=\sup\|\differential^2W\|<\infty$; and,
 \item there exist constants 
$\mathcal{R} \in [0,\infty)$ and $K \in (0, \infty)$ such that
\[    (x^1-y^1) \cdot ( \nabla V(x^1)-\nabla V(y^1) ) \  \geq \    K \norm{x^1-y^1}^2   \]
for all $x^1,y^1 \in \mathbb{R}^k$ with  $\norm{x^1-y^1} \geq \mathcal{R}$.
 \end{itemize} 
Suppose that $T >  0$, $\epsilon \ge 0$ and $h_1 \ge 0$ satisfy \begin{align*}
L (T + h_1)^2 \ &\le \ \frac{3}{5} \min \left( \frac{3 K}{10 L},\frac{1}{4},\frac{3 K}{256 \cdot 5 \cdot 2^6\,L\mathcal R^2 (L+K)}\right) \;, \\
|\epsilon| \tilde{L} \ &< \ \min \left( \frac{K}{6} , \frac{1}{2} \left( \frac{K}{36 \cdot 149} \right)^2 \left( T + 8 \mathcal{R} \sqrt{\frac{L+K}{K}} \right)^2 \exp\left(-40 \frac{\mathcal{R}}{T} \sqrt{\frac{L+K}{K}}\right) \right) \;, \\
h_1 \ &\le \ \frac{K T}{525 L + 235 K} \;.
\end{align*}  Then for all initial distributions $\nu, \eta \in \mathcal{P}(\mathbb{R}^{nk})$, for all $m\ge 0$ and for any $h \in [0,h_1]$, Bou-Rabee and Schuh (2020) 
\cite{BoSc2020} use a component-wise coupling method to prove that \begin{equation}
\label{eq:W1ell1_mf}
\begin{aligned}
& \W_{\ell_1}^1 (\nu \tilde{\pi}^m ,\eta \tilde{\pi}^m )\, \le\, M \, e^{- c m} \, \mathcal \W_{\ell_1}^1 (\nu ,\eta) ,~~\text{where} ~~ \ell_1(x,y) \, := \,  \sum_{i=1}^n |x^i - y^i|, \\
& \quad M \, = \, \exp\left( \frac{5}{2} ( 1 + \frac{4 \mathcal{R}}{T} \sqrt{\frac{L+K}{K}}) \right) ~~\text{and}~~  c\, =\, \frac{K T^2}{156} \exp\left( -10 \frac{\mathcal{R}}{T} \sqrt{\frac{L+K}{K}} \right).
\end{aligned}
\end{equation}
Since $\norm{x - y} \le \ell_1(x,y) \le \sqrt{n} \norm{x - y}$, we obtain \[ 
 \W^1 (\nu \tilde{\pi}^m ,\eta \tilde{\pi}^m )\ \le\ M \, e^{- c m} \,  \W_{\ell_1}^1 (\nu ,\eta) \ \le\ \sqrt{n} \, M \, e^{- c m} \, \mathcal \W^1 (\nu ,\eta) \;.
\]
Thus Assumption (A5) holds with $M_1 = \sqrt{n}  M$ and $c$ as above. In particular,
$c$ is independent of the number of particles $n$, and therefore, the mixing time depends only logarithmically on $n$.  This result also implies that there exists a unique invariant probability measure $\tilde \mu$ of $\tilde{\pi}$; see Corollary 5 of \cite{BoSc2020}.  In this mean-field context, we can also obtain slightly better dimension-dependence in the TV convergence bounds for uHMC.  To state these bounds, we additionally assume that $V,W$ are functions in $C^3(\mathbb{R}^{k})$ such that
$L_H=\sup\|\differential^3V\|<\infty$ and $\tilde{L}_H=\sup\|\differential^3W\|<\infty$.  Moreover, analogous to \eqref{eq:Tconstraint}, we assume that $T>0$ and the time step size $h>0$ are such that
$T/h\in\mathbb N$ and
\begin{equation}
    \label{eq:Tconstraint_mf}
    (L + 4 \epsilon \tilde{L}) (T^2 + T h) \ \le\ 1/6\, .
\end{equation}

\begin{thm} \label{thm:tv_to_equilibrium_mf}
For any $m \in \mathbb{N}_0$ and for any initial law
$\nu\in\mathcal P(\mathbb R^{n k})$,
\begin{equation} \label{eq:tv_to_equilibrium_mf}
\TV{\tilde \mu}{\nu \tilde{\pi}^{m+1}} \ \le \  
\frac{3}{2} \, \left( T^{-2}  + 34 \, k \, (L_H + 8 \epsilon \tilde{L}_H)^2 \, T^4 \right)^{1/2} \, \, M \, \W_{\ell_1}^1(\tilde \mu, \nu) \, e^{-c \, m}  \;. 
\end{equation}
\end{thm}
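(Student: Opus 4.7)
The proof will mirror that of Theorem~\ref{thm:tv_to_equilibrium}, the only new ingredient being a mean-field refinement of the regularization lemma that delivers a bound with respect to the $\ell_1$-Wasserstein distance and with the per-particle dimension $k$ in place of the full dimension $nk$. Concretely, I would establish the following analog of Lemma~\ref{lem:regularization}: for every $\nu,\eta \in \mathcal P(\mathbb R^{nk})$,
\begin{equation*}
\TV{\nu\tilde\pi}{\eta\tilde\pi} \ \le\ \tfrac{3}{2}\bigl(T^{-2} + 34\, k\,(L_H + 8\epsilon\tilde L_H)^2 T^4\bigr)^{1/2}\, \W_{\ell_1}^1(\nu,\eta).
\end{equation*}
Then Theorem~\ref{thm:tv_to_equilibrium_mf} follows at once by applying this inequality to $\tilde\mu = \tilde\mu\tilde\pi^m$ (by invariance) and $\nu\tilde\pi^m$, and invoking the $\W_{\ell_1}^1$-contraction bound \eqref{eq:W1ell1_mf} from Bou-Rabee and Schuh with the constants $M$ and $c$ given there.

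For the mean-field regularization bound, the strategy is the one-shot coupling of Lemma~\ref{lem:overlap1} carried out \emph{componentwise}. Given initial positions $x,y\in\mathbb R^{nk}$, instead of coupling the full initial velocity vectors $\xi,\xi+\delta$ globally, I would choose independent per-particle shifts $\delta^i\in\mathbb R^k$ so that the $i$-th component of the Verlet trajectory starting from $(y,\xi+\delta)$ coincides with that starting from $(x,\xi)$ at time $T$. Since $\xi$ is a product $N(0,I_k)^{\otimes n}$, the total-variation cost of this change of measure factorizes into per-particle Pinsker-type terms, each bounded by a constant times $|\delta^i|$, and the sum reassembles as a bound proportional to $\ell_1(x,y)$ rather than $|x-y|$. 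The required shift $\delta^i$ has a leading term of order $T^{-1}(y^i-x^i)$ together with a second contribution from the third derivative of $U$ in particle $i$; using \eqref{eq:Tconstraint_mf} to control the Verlet perturbation series, the latter is bounded by a constant multiple of $\sqrt{k}\,(L_H+8\epsilon\tilde L_H)\,T^2\, \ell_1(x,y)$. Combining and squaring yields the factor $T^{-2} + 34\,k\,(L_H+8\epsilon\tilde L_H)^2 T^4$ under the square root.

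The \textbf{main obstacle} is precisely the sharp dependence on $k$ and on $L_H + 8\epsilon\tilde L_H$ in the third-derivative correction. Unlike the i.i.d.\ case, the Verlet updates couple particles through the pairwise term $W$, so the $i$-th component of $\differential^3 U$ acting on trajectory differences receives both a self-term bounded by $L_H$ and a sum of $n-1$ interaction terms, each weighted by $\epsilon/n$ and bounded by $\tilde L_H$ times a difference $|x^i - x^\ell - (y^i - y^\ell)|$. The key estimate is that when these interaction contributions are summed over $\ell$, the total per-particle $\ell_1$-mass is controlled by $2\epsilon\tilde L_H\,\ell_1(x,y)$ (with the combinatorial factor $8$ absorbing midpoint and endpoint averages inherent in velocity Verlet), and that variance-type arguments confine the Gaussian cost to $\mathbb R^k$ rather than $\mathbb R^{nk}$ because each $\delta^i$ lives in $\mathbb R^k$. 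Once this per-particle accounting is in place, integrating against the $\W_{\ell_1}^1$-optimal coupling of $\nu$ and $\eta$ and then combining with \eqref{eq:W1ell1_mf} completes the proof.
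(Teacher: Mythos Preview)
Your high-level plan is correct and matches the paper: prove a mean-field regularization bound of the form
\[
\TV{\nu\tilde\pi}{\eta\tilde\pi}\ \le\ \tfrac{3}{2}\bigl(T^{-2}+34\,k\,(L_H+8\epsilon\tilde L_H)^2 T^4\bigr)^{1/2}\,\W_{\ell_1}^1(\nu,\eta),
\]
then apply it to $\tilde\mu\tilde\pi^m$ and $\nu\tilde\pi^m$ and invoke the $\W_{\ell_1}^1$-contraction \eqref{eq:W1ell1_mf}. This is exactly Lemma~\ref{lem:regularization_mf} in the paper, and the deduction of the theorem from it is identical to yours.

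There is, however, a genuine gap in your proposed mechanism for the regularization bound. You write that because $\xi\sim\mathcal N(0,I_k)^{\otimes n}$, ``the total-variation cost of this change of measure factorizes into per-particle Pinsker-type terms''. This would be valid only if the map $\Phi$ were a product map, i.e.\ if $\Phi^i(v)$ depended only on $v^i$. But it does not: the Verlet update for particle $i$ involves $\nabla_i U$, which through the interaction term $W$ depends on \emph{all} particle positions, and hence the velocity shift $\delta^i=\Phi^i(v)-v^i$ needed to land at $\tilde q_T^i(x,\xi)$ depends on the full velocity vector $v$. Consequently $\law(\Phi(\xi))$ is not a product measure, and neither TV nor KL factorizes over particles.

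The paper avoids this issue by not attempting to factorize at all. It applies the global overlap bound (Lemma~\ref{lem:overlap}) to the single map $\Phi:\mathbb R^{nk}\to\mathbb R^{nk}$, which yields
\[
\TV{\law(\xi)}{\law(\Phi(\xi))}\ \le\ \Bigl(E\bigl[|\Phi(\xi)-\xi|^2+2\mnorm{D\Phi(\xi)-I_{nk}}_F^2\bigr]\Bigr)^{1/2}.
\]
The mean-field gain then enters purely at the level of estimating these two quantities componentwise (Lemmas~\ref{lem:oneshot:1:a_mf} and~\ref{lem:oneshot:1:b_mf}): one shows $T\sum_\ell|\Phi^\ell(v)-v^\ell|\le(3/2)\sum_\ell|x^\ell-y^\ell|$ and, crucially,
\[
\mnorm{D\Phi(v)-I_{nk}}_F\ \le\ \sqrt{k}\,\sum_{i,\ell}\mnorm{\partial_{v^\ell}\Phi^i(v)-\delta_{i\ell}I_k}\ \le\ C\,\sqrt{k}\,(L_H+8\epsilon\tilde L_H)\,T^2\sum_\ell|x^\ell-y^\ell|.
\]
The factor $\sqrt{k}$ (rather than $\sqrt{nk}$) arises from $\mnorm{A}_F\le\sqrt{k}\,\mnorm{A}$ for each $k\times k$ block, and the double sum over $(i,\ell)$ is shown to collapse to a single $\ell_1$-sum with no residual factor of $n$, using the mean-field estimates \eqref{eq:L_mf}--\eqref{eq:dH_ij_mf} and the flow bounds of Lemma~\ref{lem:apriori2_mf}. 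So the correct route to the $k$-dependence is a Frobenius-norm accounting on the global Jacobian, not a factorization of the TV distance.
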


\begin{proof} For $m=0$, \eqref{eq:tv_to_equilibrium_mf} is a 
special case of a $\W_{\ell_1}^1 $/TV regularization
result proven in
Lemma~\ref{lem:regularization_mf} below. The general case follows by combining Lemma~\ref{lem:regularization_mf} and \eqref{eq:W1ell1_mf}.
\end{proof}

As before, Theorem~\ref{thm:tv_to_equilibrium_mf} immediately implies an upper bound on the $\varepsilon$-mixing time 
$t_{\mathrm{mix}}(\varepsilon ,\nu )$
of the uHMC Markov chain applied to mean-field $U$.  We stress that the following upper bound typically depends on the number of particles $n$ through $\W_{\ell_1}^1(\tilde \mu, \nu)$, but this dimension dependence is typically polynomial in $n$, and therefore, the upper bound typically depends logarithmically on $n$.

\begin{cor}[Upper bound for mixing time]\label{cor:mixingtime_mf}
For any $\varepsilon>0$ and any
$\nu\in\mathcal P(\mathbb R^{n k})$, 
\[
t_{\mathrm{mix}}(\varepsilon ,\nu )\ \le\ 2+ \frac{1}{c}  \log\bigg( \frac{3 \left( T^{-2}  + 34 \, k \, (L_H + 8 \epsilon \tilde{L}_H)^2 \, T^4 \right)^{1/2} \, M \, \W_{\ell_1}^1(\tilde \mu, \nu)}{2 \varepsilon} \bigg) \;. 
\] 
\end{cor}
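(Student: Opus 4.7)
The proof is a direct consequence of Theorem \ref{thm:tv_to_equilibrium_mf}, obtained by inverting the exponential decay estimate. It follows the identical pattern by which Corollary \ref{cor:mixingtime} is deduced from Theorem \ref{thm:tv_to_equilibrium}, so no new ingredients are needed and there is no substantive obstacle; the argument is essentially a one-line rearrangement of a geometric convergence bound into a mixing-time bound.

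Concretely, I would abbreviate the prefactor by setting
\[
A \ := \ \left( T^{-2}  + 34 \, k \, (L_H + 8 \epsilon \tilde{L}_H)^2 \, T^4 \right)^{1/2},
\]
so that Theorem \ref{thm:tv_to_equilibrium_mf} reads
\[
\TV{\tilde \mu}{\nu \tilde{\pi}^{m+1}} \ \le \ \tfrac{3}{2} \, A \, M \, \W_{\ell_1}^1(\tilde \mu, \nu) \, e^{-cm} \qquad \text{for every } m \in \mathbb{N}_0.
\]
Requiring the right-hand side to be at most $\varepsilon$ and solving for $m$ yields
\[
m \ \ge \ \frac{1}{c} \log\left( \frac{3 \, A \, M \, \W_{\ell_1}^1(\tilde \mu, \nu)}{2 \varepsilon} \right),
\]
and then by the definition of the $\varepsilon$-mixing time one has $t_{\mathrm{mix}}(\varepsilon,\nu) \le m+1$. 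Taking $m$ to be the ceiling of the above logarithmic expression and bounding the ceiling by the argument plus $1$ produces the additive constant $2$ on the right-hand side of the asserted bound.

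The only care points are the degenerate regimes. If $\W_{\ell_1}^1(\tilde \mu, \nu) = 0$, then $\nu = \tilde\mu$ and $t_{\mathrm{mix}}(\varepsilon,\nu) = 0$; if the logarithm in the displayed bound is negative, then choosing $m = 1$ already drives the TV distance below $\varepsilon$, so $t_{\mathrm{mix}}(\varepsilon,\nu) \le 2$. In both cases the stated upper bound continues to hold because the right-hand side is at least $2$. With these checks in place, the corollary follows immediately from Theorem \ref{thm:tv_to_equilibrium_mf}.
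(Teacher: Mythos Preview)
Your proposal is correct and matches the paper's approach exactly: the paper states this corollary as an immediate consequence of Theorem~\ref{thm:tv_to_equilibrium_mf}, in precise analogy with how Corollary~\ref{cor:mixingtime} follows from Theorem~\ref{thm:tv_to_equilibrium}, and provides no additional argument. Your inversion of the exponential bound, with the ceiling-plus-one accounting for the additive $2$, is exactly what is intended; one minor slip is the claim that ``the right-hand side is at least $2$'' in the degenerate case, since a very negative logarithm can make the right-hand side negative---but this edge case is not addressed in the paper either and does not affect the substance of the result.
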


Moreover, for mean-field $U$, we can obtain a better dimension-dependence in the upper bounds for the TV accuracy $\TV{\mu}{\tilde \mu}$.  To this end, we additionally assume that $V, W$ are functions in $C^4(\mathbb{R}^{k})$ such that 
$L_I=\sup\|\differential^4V\|<\infty$ and $\tilde{L}_I=\sup\|\differential^4W\|<\infty$.  By Corollary 7  of \cite{BoSc2020}, 
\begin{equation} \label{eq::W1ell1_mf}
    \begin{aligned}
& \W^1(\mu, \tilde \mu)\ \le\ \W_{\ell_1}^1(\mu, \tilde\mu ) \\
& \qquad \ \le\
 \frac{1}{c}  \tilde{C}_2 M \Big( n k + \sum_{\ell=1}^n \int \norm{x^{\ell}} \mu(d x) + \sum_{\ell=1}^n \int \norm{x^{\ell}}^2 \mu(d x)  \Big) \; h^2 
\end{aligned}
\end{equation} where $\tilde{C}_2$ depends only on $K$, $L$, $\mathcal{R}$, $\tilde{L}$, $L_H$, $\tilde{L}_H$,  
and T.  Therefore, Assumption (A6) holds with \[
M_2 \ =\  \frac{1}{c}  \tilde{C}_2 M \Big( n k + \sum_{\ell=1}^n \int \norm{x^{\ell}} \mu(d x) + \sum_{\ell=1}^n \int \norm{x^{\ell}}^2 \mu(d x)  \Big) \;.
\]
Note that $M_2$ depends linearly on the number of particles and on the sum of the first and second moments of the $n$ components of the target distribution.  


\begin{thm} \label{thm:tv_IM_accuracy_mf}
Under the assumptions made above,
\begin{align} \label{eq:tv_IM_accuracy_mf} 
&\TV{\mu}{\tilde \mu} \ \le \  h^2 \, \Big[ (3/2)  \, \left( T^{-2}  + 34 \, k \, (L_H + 8 \epsilon \tilde{L}_H)^2 \, T^4 \right)^{1/2}  \, M_2  +  C \Big] ,\;\quad\text{where} \\
\begin{split} \label{eq:C_mf} 
& C \ := \ \Big[ 
 n^2 k \bigg(  17 (L+4 \epsilon \tilde{L})^2 +
 28 (L_H +8 \epsilon \tilde{L}_H)^2 T^2
 + 
 104 k (L_H +8 \epsilon \tilde{L}_H)^2 T^2 
 \\  & 
 \qquad  + 180 (2 k  
 + k^2 ) T^4 \big( (L_I + 14 \epsilon \tilde{L}_I)  +  (L_H +8 \epsilon \tilde{L}_H)^2 T^2 \big)^2  \bigg) 
  \\ 
 & \qquad  + n \big( 10 k  (L_H +8 \epsilon \tilde{L}_H)^2 +  7 (L+4 \epsilon \tilde{L})^2 T^{-2} \big) \sum_{\ell=1}^n \int \norm{x^{\ell}}^2 \mu(d x) \\
 & \qquad + n \bigg( 40 k \big((L_I + 14 \epsilon \tilde{L}_I)^2 + (L_H +8 \epsilon \tilde{L}_H)^2 T^2 \big)^2  
 \\
&\qquad  +  7 (L_H +8 \epsilon \tilde{L}_H)^2 T^{-2}  \bigg) \sum_{\ell=1}^n \int \norm{x^{\ell}}^4 \mu(d x)  \Big]^{1/2} .
\end{split}
\end{align} 
\end{thm}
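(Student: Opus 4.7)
The plan is to mirror the proof of Theorem~\ref{thm:tv_IM_accuracy}, replacing the general constants by their mean-field analogues throughout. Starting from the triangle inequality
\begin{equation*}
\TV{\mu}{\tilde \mu}\ \le\ \TV{\mu \pi}{\mu \tilde \pi} + \TV{\mu \tilde \pi}{\tilde \mu \tilde \pi},
\end{equation*}
I would bound the second summand by combining the mean-field regularization estimate Lemma~\ref{lem:regularization_mf} with the $\W_{\ell_1}^1$ accuracy bound \eqref{eq::W1ell1_mf}, which immediately yields
\begin{equation*}
\TV{\mu \tilde \pi}{\tilde \mu \tilde \pi}\ \le\ \tfrac{3}{2}\bigl(T^{-2} + 34\, k\, (L_H + 8\epsilon \tilde{L}_H)^2\, T^4\bigr)^{1/2}\, M_2\, h^2.
\end{equation*}
This is the direct analogue of \eqref{mutpi_tmutpi} in the proof of Theorem~\ref{thm:tv_IM_accuracy}.

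For the first summand $\TV{\mu \pi}{\mu \tilde \pi}$, the plan is to establish a mean-field analogue of Lemma~\ref{lem:tv_strong_error} delivering $\TV{\mu \pi}{\mu \tilde \pi} \le C h^2$ with $C$ given by \eqref{eq:C_mf}. Concretely, I would couple the exact Hamiltonian flow $q_t$ and the Verlet flow $\tilde q_t$ synchronously, started from a common $(x,\xi)$ with $x\sim\mu$ and $\xi\sim\mathcal N(0,I_d)$, and then apply the one-shot overlap estimate of Lemma~\ref{lem:overlap2}. This reduces the task to bounding weighted $L^2$-moments of the Verlet strong error $\tilde q_T - q_T$ (and of the companion velocity error) under $\mu$, together with the Gaussian overlap integral responsible for the $T^{-2}$ factors.

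The technical heart of the argument is then to re-run the strong-error expansion for velocity Verlet while exploiting the sparsity of the derivatives of $U$ in \eqref{mf}. Writing $U(x)=\sum_i V(x^i)+\tfrac{\epsilon}{n}\sum_{i\ne\ell} W(x^i - x^\ell)$, one checks that the operator norm of $\differential^j U$ is controlled by the single-particle constants: $\|\differential^2 U\|\lesssim L+4\epsilon \tilde L$, $\|\differential^3 U\|\lesssim L_H+8\epsilon\tilde L_H$, $\|\differential^4 U\|\lesssim L_I+14\epsilon\tilde L_I$. More importantly, Frobenius-type norms of these tensors pick up factors of $k$ (and $k^2$) from the particle block structure rather than $d=nk$, and sums of the form $\sum_i\|\differential V(x^i)\|^2$ or $\sum_i\|\nabla^2 V(x^i)\|_F^2$ contribute $n$ copies of $\int\|x^\ell\|^p\,\mu(dx)$ in place of $\int\|x\|^p\,\mu(dx)$. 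Substituting these improved bounds into the same strong-error chain that produces \eqref{eq:C} yields exactly the mixed polynomial in $n$ and $k$ appearing in \eqref{eq:C_mf}.

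The main obstacle is purely bookkeeping: one must cleanly decompose every derivative encountered in the Verlet expansion into its single-particle and pairwise contributions, identify which sums collapse to $n$, $nk$, $n^2 k$, $k^2$, etc., and verify that no hidden $\sqrt d$ factor sneaks back in through an operator-norm estimate applied to a sparse tensor. Once the mean-field strong-error bound $\TV{\mu \pi}{\mu \tilde \pi} \le C h^2$ with $C$ as in \eqref{eq:C_mf} is in hand, inserting it together with the bound on $\TV{\mu \tilde \pi}{\tilde \mu \tilde \pi}$ into the triangle inequality produces \eqref{eq:tv_IM_accuracy_mf}, completing the proof.
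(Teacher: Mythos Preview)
Your proposal is correct and follows essentially the same route as the paper: the triangle inequality splits $\TV{\mu}{\tilde\mu}$ into $\TV{\mu\pi}{\mu\tilde\pi}$ and $\TV{\mu\tilde\pi}{\tilde\mu\tilde\pi}$, the second term is handled by Lemma~\ref{lem:regularization_mf} combined with \eqref{eq::W1ell1_mf}, and the first term is bounded by the mean-field strong-error lemma (Lemma~\ref{lem:tv_strong_error_mf} in the paper), which is derived by feeding mean-field one-shot bounds (Lemmas~\ref{lem:oneshot:2:a_mf} and \ref{lem:oneshot:2:b_mf}) into the general overlap estimate Lemma~\ref{lem:overlap}. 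Your description of the bookkeeping---tracking how the block structure of $\differential^j U$ replaces factors of $d$ by $k$, $k^2$ and moment sums $\sum_\ell\int\|x^\ell\|^p\,\mu(dx)$---is exactly what the paper carries out in those supporting lemmas.
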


For mean-field $U$, note that the constant $C$ depends linearly on the number of particles $n$ provided that $n^{-1} \sum_{\ell=1}^n \int \norm{x^{\ell}}^4 \mu(d x)$ does not depend on $n$.  

\begin{proof}
By Lemma~\ref{lem:tv_strong_error_mf} below, \begin{align}
& \TV{\mu \pi}{\mu \tilde \pi} \ \le \ C\,   h^2  \;. \label{mupi_mutpi_mf}
\end{align}
Moreover, by Lemma~\ref{lem:regularization_mf}, and by \eqref{eq::W1ell1_mf}, \begin{align}
\TV{\mu \tilde \pi}{\tilde \mu \tilde \pi} &\ \le\  (3/2) \, \left( T^{-2}  + 34 \, k \, (L_H + 8 \epsilon \tilde{L}_H)^2 \, T^4 \right)^{1/2}  \, \W_{\ell_1}^1(\mu, \tilde \mu) \nonumber \\
&\ \le\ (3/2)  \, \left( T^{-2}  + 34 \, k \, (L_H + 8 \epsilon \tilde{L}_H)^2 \, T^4 \right)^{1/2} \, M_2 \, h^2 \;. \label{mutpi_tmutpi_mf}
\end{align}
Inserting \eqref{mutpi_tmutpi_mf} and \eqref{mupi_mutpi_mf} into \eqref{tri::ineq} gives \eqref{eq:tv_IM_accuracy_mf}.  
\end{proof}

 Inserting \eqref{eq:tv_IM_accuracy_mf} and \eqref{eq:tv_to_equilibrium_mf} into \eqref{tri:::ineq} gives the following corollary.

\begin{cor} \label{cor:tv_to_target_mf}
For any $m \in \mathbb{N}_0$ and any initial law $\nu\in \mathcal P(\mathbb R^{n k})$,
\begin{equation} \label{eq:tv_to_target_mf}
\begin{aligned}
 \TV{\mu}{\nu \tilde{\pi}^{m+1}} \ &\le \  
\frac{3}{2} \, \left( T^{-2}  + 34 \, k \, (L_H + 8 \epsilon \tilde{L}_H)^2 \, T^4 \right)^{1/2} \, M \, \W_{\ell_1}^1(\tilde \mu, \nu) \, e^{-c \, m}  \\
& \quad + h^2 \, \bigg[ \frac{3}{2} \, \left( T^{-2}  + 34 \, k \, (L_H + 8 \epsilon \tilde{L}_H)^2  \, T^4 \right)^{1/2}   \, M_2  + C\bigg] .
\end{aligned}
\end{equation}
\end{cor}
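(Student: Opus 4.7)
The proof is a direct consequence of the triangle inequality combined with the two theorems that immediately precede the corollary. The plan is to start from the decomposition
\begin{equation*}
\TV{\mu}{\nu \tilde{\pi}^{m+1}} \ \le\  \TV{\nu \tilde{\pi}^{m+1}}{\tilde\mu} \, +\, \TV{\tilde\mu}{\mu},
\end{equation*}
which is exactly the bound \eqref{tri:::ineq} already stated in the excerpt. The first summand measures convergence of the uHMC chain to its own invariant measure $\tilde\mu$, and is bounded by Theorem~\ref{thm:tv_to_equilibrium_mf}, yielding the exponentially decaying term $\tfrac{3}{2}(T^{-2} + 34 k (L_H + 8\epsilon\tilde L_H)^2 T^4)^{1/2} M\,\W_{\ell_1}^1(\tilde\mu,\nu)\,e^{-cm}$. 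The second summand quantifies the systematic bias between $\tilde\mu$ and the target $\mu$, and is controlled by Theorem~\ref{thm:tv_IM_accuracy_mf}, providing the $h^2$-order contribution $h^2\bigl[\tfrac{3}{2}(T^{-2} + 34 k (L_H + 8\epsilon\tilde L_H)^2 T^4)^{1/2} M_2 + C\bigr]$ with $C$ given by \eqref{eq:C_mf}. Adding these two bounds yields \eqref{eq:tv_to_target_mf}.

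There is no genuine obstacle at this step: the assumptions (A1)--(A6) together with the step-size constraint \eqref{eq:Tconstraint_mf} are exactly the hypotheses of both theorems being invoked, so the substitution is immediate. All the analytical work — construction of the one-shot coupling for the $\W_{\ell_1}^1$-to-TV regularization bound (Lemma~\ref{lem:regularization_mf}), the componentwise contraction bound \eqref{eq:W1ell1_mf} from \cite{BoSc2020}, and the strong-accuracy estimate of Lemma~\ref{lem:tv_strong_error_mf} that produces the constant $C$ — is already carried out in the machinery supporting Theorems~\ref{thm:tv_to_equilibrium_mf} and \ref{thm:tv_IM_accuracy_mf}. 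Consequently the corollary is presented only to record the combined bound in a form convenient for extracting complexity estimates in the number of gradient evaluations.
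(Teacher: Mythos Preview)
Your proposal is correct and matches the paper's own argument: the paper simply states that inserting \eqref{eq:tv_IM_accuracy_mf} and \eqref{eq:tv_to_equilibrium_mf} into the triangle inequality \eqref{tri:::ineq} gives the corollary, which is precisely what you describe.
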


\section{Key ingredients of the proofs}

\subsection{Velocity Verlet as a Variational Integrator, Revisited}
\label{sec:vi}

 Here we recall a well-known variational characterization of the velocity Verlet integrator.  In particular, velocity Verlet can be derived from a discrete-time variational principle, and hence, is a variational integrator \cite{MaWe2001}.  A key tool in our analysis is a sufficient condition for convexity of the corresponding discrete action sum.  

\medskip

To this end, we introduce the \emph{discrete Lagrangian} $L_h: \mathbb{R}^d \times \mathbb{R}^d \to \mathbb{R}$ corresponding to velocity Verlet  \[
L_h(x, y) \ = \  \frac{h}{2} \left(  \frac{| y - x |^2}{h^2}  -  U(y ) -  U(x) \right) \;.  
\]
Given $\mathsf{a}, \mathsf{b} \in \mathbb{R}^d$, define the \emph{discrete path space} \[
\mathcal{C}_{h} = \left\{ \tilde{q}: \{t_k \}_{k=0}^{N} \to \mathbb{R}^d ~~\text{with endpoint conditions}~ \tilde{q}_{0} = \mathsf{a} ~\text{and}~ \tilde{q}_{T} = \mathsf{b} \right\} \;,
\]
and the \emph{discrete action sum} $S_h: \mathcal{C}_{h} \to \mathbb{R}$ by \[
S_h(\tilde{q}) = \sum_{k=0}^{N-1}  L_h(  \tilde{q}_{t_k}, \tilde{q}_{t_{k+1}} )  \;.  
\] Note that $t_0 =0$ and $t_N = T$.
Computing the directional derivative of $S_h$ in the direction of $u: \{t_k \}_{k=0}^{N} \to \mathbb{R}^d $ with $u_{0} = u_{T} = 0$ gives the first variation of $S_h$ \begin{align*}
\partial_u S_h(\tilde{q})  \ &= \  \left. \frac{\partial}{\partial \epsilon} S_h(\tilde{q} + \, \epsilon \, u) \right|_{\epsilon = 0} \\
\ &= \ \sum_{k=0}^{N-1} \left[ D_1 L_h(\tilde{q}_{t_k}, \tilde{q}_{t_{k+1}}) \cdot u_{t_k} + D_2 L_h(\tilde{q}_{t_k}, \tilde{q}_{t_{k+1}}) \cdot u_{t_{k+1}} \right]   \\
\ &= \ \sum_{k=1}^{N-1} \left( D_1 L_h(\tilde{q}_{t_k}, \tilde{q}_{t_{k+1}}) + D_2 L_h(\tilde{q}_{t_{k-1}}, \tilde{q}_{t_{k}}) \right)   \cdot u_{t_k}  
\end{align*} 
where in the last step we used summation by parts and  $u_{0} = u_{T} = 0$.  Stationarity of this action sum gives the discrete Euler-Lagrange equations \begin{equation}
    \label{eq:del}
     D_1 L_h(\tilde{q}_{t_k}, \tilde{q}_{t_{k+1}}) + D_2 L_h(\tilde{q}_{t_{k-1}}, \tilde{q}_{t_{k}}) = 0 \;,  \quad k \in \{1, \dots , N-1 \} \;.
\end{equation}  Introducing discrete velocities and simplifying yields the velocity Verlet integrator \begin{align*}
\begin{rcases*}
\tilde{v}_{t_{k}}  \ = \ - D_1 L_h (\tilde{q}_{t_{k}} , \tilde{q}_{t_{k+1}})  \\ 
\tilde{v}_{t_{k+1}}  \ = \  D_2 L_h (\tilde{q}_{t_{k}}, \tilde{q}_{t_{k+1}})  
\end{rcases*} \implies 
\begin{cases}
\tilde{q}_{t_{k+1}} \ = \ \tilde{q}_{t_{k}}+ h \tilde{v}_{t_{k}}  - ( h^2 / 2 ) \nabla U(\tilde{q}_{t_{k}})   \;,  \\
 \tilde{v}_{t_{k+1}} \ = \ \tilde{v}_{t_{k}} - (h/2) [ \nabla U(\tilde{q}_{t_k}) +   \nabla U(\tilde{q}_{t_{k+1}}) ] \;. \end{cases}  
\end{align*}
The following lemma indicates that for sufficiently short time intervals the discrete action sum $S_h$ corresponding to velocity Verlet is strongly convex.

\begin{lem}[M\"{u}ller \& Ortiz 2004 \cite{MueOr2004}]\label{lem:MuOr2004}
Suppose that (A2) holds.  For any $\mathsf{a},\mathsf{b} \in \mathbb{R}^d$, for any $h>0$ and $T>0$ satisfying $LT^2 \le (2/5) \pi^2$ and $T/h \in \mathbb{N}$, the discrete action sum $S_h$ is strongly convex.  
\end{lem}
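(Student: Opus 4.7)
The plan is to compute the Hessian of $S_h$ with respect to the interior nodes $\tilde q_{t_1},\dots ,\tilde q_{t_{N-1}}$ (the endpoints are fixed), decompose it into a \emph{kinetic} piece coming from the $|y-x|^2/(2h)$ term and a \emph{potential} piece coming from $-(h/2)(U(x)+U(y))$, and then control the potential piece by the kinetic piece via a sharp discrete Poincar\'e inequality on a path with Dirichlet endpoints.

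First I would differentiate $L_h$ twice: $\partial^2_{11}L_h = \partial^2_{22}L_h = h^{-1}I - (h/2)\mathbf H$, while $\partial^2_{12}L_h = -h^{-1}I$. Summing the contributions of two consecutive summands of $S_h$ at each interior node yields a block-tridiagonal Hessian whose action on a tangent vector $u=(u_{t_1},\dots ,u_{t_{N-1}})$, extended by $u_{0}=u_{T}=0$, equals
\begin{equation*}
u^{\top}\nabla^{2}S_{h}(\tilde q)\,u \;=\; \sum_{k=0}^{N-1}\frac{|u_{t_{k+1}}-u_{t_{k}}|^{2}}{h} \;-\; h\sum_{k=1}^{N-1} u_{t_{k}}^{\top}\mathbf H(\tilde q_{t_{k}})\,u_{t_{k}} ,
\end{equation*}
where the identity for the kinetic part is obtained by expanding $|u_{t_{k+1}}-u_{t_{k}}|^{2}$ and collecting the tridiagonal coefficients $2/h$ and $-1/h$ produced by the second derivatives of $L_h$.

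Next, (A2) gives $-h\,u_{t_k}^{\top}\mathbf H(\tilde q_{t_k})u_{t_k}\ge -hL\,|u_{t_k}|^{2}$, so strong convexity of $S_h$ reduces to showing a discrete Poincar\'e inequality of the form
\begin{equation*}
\sum_{k=0}^{N-1}\frac{|u_{t_{k+1}}-u_{t_{k}}|^{2}}{h} \;\ge\; \lambda_{1}\,h\sum_{k=1}^{N-1}|u_{t_{k}}|^{2} ,\qquad u_{0}=u_{T}=0 ,
\end{equation*}
with $\lambda_{1}>L$. The sharp constant is the smallest eigenvalue of the discrete Dirichlet Laplacian on $N$ intervals, namely $\lambda_{1}=(2/h^{2})(1-\cos (\pi h/T))$, realized by $u_{t_k}=\sin (\pi k/N)$; this is a one-line Fourier computation on the tridiagonal matrix with diagonal $2$ and off-diagonal $-1$ (tensored with $I_d$).

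Finally I would close the bound via the elementary inequality $1-\cos x\ge (2/\pi^{2})x^{2}$ for $x\in [0,\pi]$ (easily verified by checking that the difference vanishes at the endpoints, has vanishing derivative at $0$, and is first convex then concave on $[0,\pi ]$). Setting $x=\pi h/T\in (0,\pi ]$ gives $\lambda_{1}\ge 4/T^{2}$. Combined with the hypothesis $LT^{2}\le (2/5)\pi^{2}\approx 3.948<4$, this yields $\lambda_{1}-L\ge (4-(2/5)\pi^{2})/T^{2}>0$, which is uniform in the base point $\tilde q$ and implies strong convexity of $S_h$ with an explicit modulus. The only delicate point is choosing a Poincar\'e lower bound that is tight enough to accommodate the constant $(2/5)\pi^{2}$; the bound $1-\cos x\ge (2/\pi^{2})x^{2}$ is essentially tight at $x=\pi$ and is what makes the constant $(2/5)\pi^{2}$ (rather than something smaller) work for \emph{every} admissible step size $h\le T$.
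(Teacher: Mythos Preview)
Your proof is correct and follows essentially the same approach as the paper: compute the Hessian of $S_h$ on the interior nodes, split it into the kinetic quadratic form $\sum_k |u_{t_{k+1}}-u_{t_k}|^2/h$ and the potential term $-h\sum_k u_{t_k}^\top\mathbf H(\tilde q_{t_k})u_{t_k}$, bound the latter by $-Lh\sum_k|u_{t_k}|^2$ via (A2), and control the former from below by the smallest eigenvalue of the discrete Dirichlet Laplacian. The only difference is cosmetic: you lower-bound that eigenvalue via $1-\cos x\ge (2/\pi^2)x^2$, obtaining $\lambda_1\ge 4/T^2$, whereas the paper writes the same eigenvalue as $(\pi^2/T^2)\big(\sin(\pi/(2N))/(\pi/(2N))\big)^2$ and bounds the sinc factor below by $2/5$; your estimate is in fact marginally sharper and yields a strictly positive modulus even at the endpoint $LT^2=(2/5)\pi^2$.
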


\begin{proof}
The proof shows that the second derivative of the action sum is positive definite.  
The second derivative of the action sum is given by
\begin{align*}
& D^2 S_h(\tilde{q}) (u, u) \ = \  \left. \frac{\partial^2}{\partial \epsilon^2}  S_h(\tilde{q} + \epsilon \, u) \right|_{\epsilon= 0 } \ = \ I_1 + I_2 \;,
\end{align*}
where $I_1$ and $I_2$ are defined and bounded as follows. Using summation by parts, $u_0 = u_T = 0$, and $\| D^2 U \| < L$ (by assumption (A2)) yields
\begin{align*}
I_1 \ &:= \ 
\frac{-h}{2} \sum_{k=0}^{N-1} \left( D^2 U(\tilde{q}_{t_k})(u_{t_k},u_{t_k}) + D^2 U(\tilde{q}_{t_{k+1}})(u_{t_{k+1}},u_{t_{k+1}}) \right)  \\
\ &= \ 
-h \sum_{k=1}^{N-1}  D^2 U(\tilde{q}_{t_k})(u_{t_k},u_{t_k})    \ \ge \  - L \sum_{k=1}^{N-1} h |u_{t_k}|^2 \;.
\end{align*}
Applying a Poincar\'{e} inequality yields \[
I_2 \ := \   h \sum_{k=0}^{N-1}  \frac{|u_{t_{k+1}} - u_{t_k}|^2}{h^2} \ \ge \  \frac{\pi^2}{T^2} \left( \frac{\sin(\pi/ (2 N))}{\pi / (2 N)} \right)^2 \sum_{k=1}^{N-1}  h |u_{t_k}|^2 \ \ge \ \frac{2}{5} \frac{\pi^2}{T^2} \sum_{k=1}^{N-1}  h |u_{t_k}|^2 \;.
\]  Thus, $D^2 S_h( \tilde{q}) ( u, u)  \ge ( (2/5) (\pi^2 / T^2) - L)  \sum_{k=1}^{N-1}  h |u_{t_k}|^2 $.
\end{proof}

\medskip
As an immediate corollary to Lemma~\ref{lem:MuOr2004} we obtain.

\begin{cor} \label{cor:MuOr2004}
For any $\mathsf{a},\mathsf{b} \in \mathbb{R}^d$, and for any $h>0$ and $T>0$ satisfying $LT^2 \le (2/5) \pi^2$ and $T/h \in \mathbb{N}$, there exists a unique solution $\tilde{q}^{\star}: \{t_k \}_{k=0}^{N} \to \mathbb{R}^d$ of the discrete Euler-Lagrange equations \eqref{eq:del} with endpoint conditions $\tilde{q}_0^{\star} = \mathsf{a}$ and $\tilde{q}_T^{\star} = \mathsf{b}$.
\end{cor}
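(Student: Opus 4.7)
The plan is to derive both existence and uniqueness directly from the strong convexity of $S_h$ established in Lemma~\ref{lem:MuOr2004}. First, I would identify the admissible set of discrete paths with fixed endpoints as a finite-dimensional affine subspace: writing $N:=T/h$, a path $\tilde q\in\mathcal C_h$ is determined by its interior values $(\tilde q_{t_1},\dots,\tilde q_{t_{N-1}})\in(\mathbb R^d)^{N-1}$, so $\mathcal C_h$ is naturally parametrized by $\mathbb R^{d(N-1)}$. On this affine space $S_h$ is smooth, and by Lemma~\ref{lem:MuOr2004} it is strongly convex under the hypothesis $LT^2\le (2/5)\pi^2$ with $T/h\in\mathbb N$.

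Next I would handle existence: strong convexity on a finite-dimensional space forces coercivity, i.e.\ $S_h(\tilde q)\to\infty$ as $\|\tilde q\|\to\infty$ along $\mathcal C_h$, so the continuous function $S_h$ attains its infimum at some $\tilde q^\star\in\mathcal C_h$. Since $\tilde q^\star$ is an interior minimizer of a smooth function on the affine constraint set, the first variation vanishes along all admissible variations, i.e.\ $\partial_u S_h(\tilde q^\star)=0$ for every $u:\{t_k\}_{k=0}^N\to\mathbb R^d$ with $u_0=u_T=0$. By the integration-by-parts computation carried out in the text just before Lemma~\ref{lem:MuOr2004}, this stationarity condition is equivalent to the discrete Euler-Lagrange equations \eqref{eq:del}. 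Hence $\tilde q^\star$ is a solution.

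For uniqueness, I would argue that a strongly convex smooth function on a finite-dimensional affine space has at most one critical point: if $\tilde q^{(1)}$ and $\tilde q^{(2)}$ both satisfied \eqref{eq:del}, they would both be critical points of $S_h$ on $\mathcal C_h$, and strong convexity would then imply both are global minimizers with $\tilde q^{(1)}=\tilde q^{(2)}$ (alternatively, by strong convexity the second derivative is positive definite so the gradient map is injective on the affine subspace). Combining the two paragraphs yields the claimed existence and uniqueness.

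I do not anticipate a serious obstacle here: the corollary is a direct translation of strong convexity (existence of a unique minimizer) into the Euler-Lagrange picture (the minimizer is the unique critical point). The only step worth writing carefully is the equivalence between stationarity of $S_h$ on the affine set $\mathcal C_h$ and the Euler-Lagrange equations \eqref{eq:del}, but this is already done in the discussion preceding Lemma~\ref{lem:MuOr2004}, so it suffices to cite that computation.
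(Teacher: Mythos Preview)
Your proposal is correct and matches the paper's approach: the paper gives no explicit proof, simply presenting the corollary as immediate from the strong convexity in Lemma~\ref{lem:MuOr2004} and noting afterwards that $\tilde q^\star$ is the minimizer of $S_h$. Your write-up just fills in the standard details behind that one-line deduction.
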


Note that $q^{\star}$ in Cor.~\ref{cor:MuOr2004} is the minimum of the corresponding action sum $S_h$.

\subsection{Overlap between Reference and Perturbed Gaussian Measure}

Let $\xi \sim \mathcal{N}(0,1)^d$ and $\Phi: \mathbb{R}^d \to \mathbb{R}^d$ be a differentiable, near identity map.    Here we provide a general upper bound for the TV distance between the \emph{reference} Gaussian measure $\law(\xi)$ and the \emph{perturbed} Gaussian measure $\law(\Phi(\xi))$.  The couple $(\xi, \Phi(\xi))$ is an example of a deterministic coupling \cite[Definition 1.2]{villani2008optimal}.  In conjunction with Corollary~\ref{cor:MuOr2004}, the general bound given below is crucial to proving that two copies of uHMC can meet in one step.

\begin{lem} \label{lem:overlap}
Let $\xi \sim \mathcal{N}(0,1)^d$ and suppose that $\Phi: \mathbb{R}^d \to \mathbb{R}^d$ is an invertible and differentiable map satisfying $\mnorm{D \Phi(v) - I_d} \le 1/2$ for all $v \in \mathbb{R}^d$. Then
\begin{equation} \label{tvbound}
\TV{\law(\xi)}{\law(\Phi(\xi))} \ \le \ 
\sqrt{ E[ \norm{\Phi(\xi)-\xi}^2 + 2  \mnorm{D \Phi(\xi) - I_d}_F^2 ] } \;.
\end{equation}
\end{lem}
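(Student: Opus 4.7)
The plan is to apply Pinsker's inequality and then evaluate the Kullback--Leibler divergence directly. Write $\eta (v):=\Phi (v)-v$ and $A(v):=D\eta (v)=D\Phi (v)-I_d$, so the hypothesis becomes $\mnorm{A(v)}\le 1/2$ uniformly in $v$. Under this assumption $\det D\Phi >0$, and the change-of-variable formula gives the density of $\Phi (\xi )$ as $q(y)=\rho (\Phi^{-1}(y))/\det D\Phi (\Phi^{-1}(y))$, where $\rho$ is the standard Gaussian density on $\mathbb{R}^d$. Pinsker's inequality $\TV{\law (\xi )}{\law (\Phi (\xi ))}^2\le\tfrac{1}{2}\KL{\law (\Phi (\xi ))}{\law (\xi )}$ then reduces the claim to an upper bound on the KL divergence.

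To evaluate the KL divergence I would change variables $y=\Phi (v)$ inside $\int q\log (q/\rho )\,dy$, turning it into an expectation under $\xi\sim\mathcal{N}(0,I_d)$:
\begin{equation*}
\KL{\law (\Phi (\xi ))}{\law (\xi )}=E\bigl[\tfrac{1}{2}(\norm{\Phi (\xi )}^2-\norm{\xi }^2)-\log\det (I_d+A(\xi ))\bigr].
\end{equation*}
Expanding $\norm{\Phi (\xi )}^2-\norm{\xi }^2=2\xi\cdot\eta (\xi )+\norm{\eta (\xi )}^2$ and invoking Gaussian integration by parts $E[\xi\cdot\eta (\xi )]=E[\operatorname{tr}(A(\xi ))]$---justified because $\mnorm{D\eta}\le 1/2$ forces $\eta$ to have linear growth, so Stein's identity applies componentwise---produces the clean form
\begin{equation*}
\KL{\law (\Phi (\xi ))}{\law (\xi )}=E\bigl[\tfrac{1}{2}\norm{\eta (\xi )}^2+F(A(\xi ))\bigr],\qquad F(A):=\operatorname{tr}(A)-\log\det (I_d+A).
\end{equation*}

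The remaining deterministic estimate $F(A)\le 2\mnorm{A}_F^2$ on $\{\mnorm{A}\le 1/2\}$ is the technical heart of the proof. I would start from the absolutely convergent expansion $\log\det (I_d+A)=\operatorname{tr}\log (I_d+A)=\sum_{k\ge 1}(-1)^{k-1}\operatorname{tr}(A^k)/k$, giving $F(A)=\sum_{k\ge 2}(-1)^k\operatorname{tr}(A^k)/k$. The key bound is $|\operatorname{tr}(A^k)|\le\mnorm{A}^{k-2}\mnorm{A}_F^2$ for $k\ge 2$, which follows from $|\operatorname{tr}(A^k)|\le\sum_i|\lambda_i|^k\le\mnorm{A}^{k-2}\sum_i|\lambda_i|^2$ combined with Weyl's majorization $\sum_i|\lambda_i|^2\le\sum_i\sigma_i^2=\mnorm{A}_F^2$. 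Summing the resulting geometric tail at $\mnorm{A}\le 1/2$ gives $|F(A)|\le\mnorm{A}_F^2\le 2\mnorm{A}_F^2$. Combining with Pinsker then yields $\TV{\law(\xi)}{\law(\Phi(\xi))}^2\le\tfrac{1}{2}E[\tfrac{1}{2}\norm{\eta (\xi )}^2+2\mnorm{A(\xi )}_F^2]\le E[\norm{\eta (\xi )}^2+2\mnorm{A(\xi )}_F^2]$, which is the lemma after taking square roots. The main obstacle is the scalar-to-matrix passage in the bound on $F$: because $A$ may be non-symmetric and even non-normal, one cannot diagonalize over $\mathbb{R}$, and the Weyl majorization of eigenvalue moduli by singular values is what introduces the Frobenius norm that appears on the right-hand side of the statement.
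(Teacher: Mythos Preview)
Your proof is correct and follows essentially the same route as the paper: Pinsker's inequality, the change-of-variables computation of the KL divergence, Gaussian integration by parts to convert the cross term $\xi\cdot\eta(\xi)$ into $\operatorname{tr}(A(\xi))$, and then a deterministic bound on $F(A)=\operatorname{tr}(A)-\log\det(I_d+A)$. The only substantive difference is in this last step: the paper invokes an external result (Theorem~1.1 of Rump, 2018) to obtain $F(A)\le \tfrac{\|A\|_F^2/2}{1-\|A\|}\le\|A\|_F^2$, whereas you give a self-contained argument via the series expansion of $\log\det(I_d+A)$ together with Weyl's majorization $\sum_i|\lambda_i|^2\le\|A\|_F^2$; your derivation is arguably more transparent and avoids the external citation, while the cited inequality is slightly sharper as an intermediate bound.
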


Let $\varphi$ denote the  probability density function of the standard $d$-dimensional normal distribution satisfying \[
\law(\xi)(dv) \ = \ \mathcal{N}(0,I_d)(dv) \ = \ \varphi(v) dv \ =\ (2 \pi)^{-d/2} \exp(-\norm{x}^2 / 2) dx\;.
\] By change of variables, note that  \begin{equation} \label{eq:LawOfEta}
\law(\Phi(\xi))(dv) \ = \ |\det( D \Phi^{-1}(v) ) | \varphi( \Phi^{-1}(v)) dv \;. 
\end{equation}

To prove Lemma~\ref{lem:overlap}, we recall Pinsker's inequality.  For probability measures $\nu_1, \nu_2 \in \mathcal{P}(\mathbb{R}^d)$  with probability densities $p_{1}$ and $p_{2}$ with respect to a common reference measure $\lambda$, define  the Kullback-Leibler divergence by\[
\KL{\nu_1}{\nu_2} \ := \ \int_{\mathbb{R}^d}  p_2(v) G\Big( \frac{p_1(v)}{p_2(v)}\Big) \, \lambda(d v)  ~~\text{where~~$G(x) = \begin{cases} x \log(x) & \text{if $x \ne 0$}, \\
0 & \text{else}.  \end{cases}$}  
\] Although non-negative, the Kullback-Leibler divergence is not a metric on $\mathcal{P}(\mathbb{R}^d)$ because it is not symmetric and does not satisfy the triangle inequality.  In this situation to bound the TV distance between $\nu_1$ and $\nu_2$, it is convenient to use Pinsker's inequality \begin{equation} \label{pinsker}
\TV{\nu_1}{\nu_2} \ \le \  \sqrt{2 \ \KL{\nu_1}{\nu_2} } \;.
\end{equation}

\begin{proof}
Since $\TV{\law(\xi)}{\law(\Phi(\xi)} =\TV{\law(\xi)}{\law(\Phi^{-1}(\xi)}$,
by Pinsker's inequality \eqref{pinsker}, it suffices to bound \begin{align}
&    \KL{\law(\xi)}{\law(\Phi^{-1}(\xi)} \ = \  \int_{\mathbb{R}^d} \frac{e^{- \frac{\norm{v}^2}{2}}}{ (2 \pi)^{d /2}}  \left[ \frac{\norm{\Phi(v)}^2}{2} -\frac{\norm{v}^2}{2} - \log|\det D\Phi(v)| \right] dv \nonumber \\
& \quad = \int_{\mathbb{R}^d} \frac{e^{- \frac{\norm{v}^2}{2}}}{ (2 \pi)^{d /2}}  \left[ \frac{1}{2} \norm{\Phi(v)-v}^2 + ( \Phi(v)-v) \cdot v - \log|\det D\Phi(v)| \right] dv    \nonumber \\
& \quad = \int_{\mathbb{R}^d} \frac{e^{- \frac{\norm{v}^2}{2}}}{ (2 \pi)^{d /2}}  \left[ \frac{1}{2} \norm{\Phi(v)-v}^2 + \tr( D\Phi(v)-I_d) - \log|\det D\Phi(v)| \right] dv  \label{eq:KL}  
\end{align}
where in the last step we used the following integration by parts identity \[  \int_{\mathbb{R}^d} e^{- \frac{\norm{v}^2}{2}} (\Phi(v) - v) \cdot v dv \ = \  \int_{\mathbb{R}^d} e^{- \frac{\norm{v}^2}{2}} \tr(D \Phi(v) - I_d) dv \;. \]
Since $\mnorm{D \Phi(v) - I_d} \le 1/2$, the spectral radius of $D \Phi(v) - I_d$ does not exceed $1/2$.   Therefore, we can invoke Theorem 1.1 of \cite{rump2018estimates}, to obtain  \begin{equation} \label{eq:pertId} 
\tr( D\Phi(v)-I_d) - \log|\det D\Phi(v)| \le \frac{\mnorm{D\Phi(v)-I_d}_F^2 / 2}{1-\mnorm{D\Phi(v)-I_d}} \le  \mnorm{D\Phi(v)-I_d}_F^2 .
\end{equation} 
Inserting \eqref{eq:pertId} into \eqref{eq:KL} yields \[
 \KL{\law(\xi)}{\law(\Phi^{-1}(\xi)} \ \le \ E\left[ (1/2) \norm{\Phi(\xi)-\xi}^2 + \mnorm{D \Phi(\xi) - I_d}_F^2 \right] \;.  
\] Applying Pinsker's inequality \eqref{pinsker} gives \eqref{tvbound}.
\end{proof}

\subsection{TV Bounds and Regularization by One-Shot Couplings} 

By using the one-shot coupling illustrated in Figure~\ref{fig:coupling} (a), here we prove that the transition kernel of uHMC has a regularizing effect.  


\begin{lem} \label{lem:overlap1}
Suppose Assumption~\ref{A1234} (A1)-(A3) hold and that the duration $T>0$ and time step size $h \ge 0$ satisfy $L(T^2 + T h) \le 1/6$ and $T/h \in \mathbb{N}$.  For any $x,y,v \in \mathbb{R}^d$, let $\Phi: \mathbb{R}^d \to \mathbb{R}^d$ be the map defined by $\tilde{q}_T(x,v) = \tilde{q}_T(y,\Phi(v))$.  Then 
\begin{equation} \label{tvbound1}
\TV{\delta_x \tilde \pi}{\delta_y \tilde \pi} \, \le \, 
\TV{\law(\xi)}{\law(\Phi(\xi))} \, \le \, \frac{3}{2}  \left( T^{-2}  + 27  d L_H^2  T^4 \right)^{1/2}  |x-y|.
\end{equation}
\end{lem}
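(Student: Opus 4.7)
My plan proceeds in two main steps: an elementary maximal-coupling argument for the first inequality, and a quantitative estimate for the second based on Lemma~\ref{lem:overlap}.

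For the first inequality $\TV{\delta_x \tilde\pi}{\delta_y \tilde\pi} \le \TV{\law(\xi)}{\law(\Phi(\xi))}$, let $\xi \sim \mathcal N(0, I_d)$ so that $\tilde q_T(x, \xi) \sim \delta_x \tilde\pi$. A maximal coupling of $\mathcal N(0, I_d)$ with $\law(\Phi(\xi))$ produces $\xi' \sim \mathcal N(0, I_d)$ on the same probability space as $\Phi(\xi)$ with $P[\xi' = \Phi(\xi)] = 1 - \TV{\law(\xi)}{\law(\Phi(\xi))}$. On the event $\{\xi' = \Phi(\xi)\}$, the defining property of $\Phi$ gives $\tilde q_T(y, \xi') = \tilde q_T(y, \Phi(\xi)) = \tilde q_T(x, \xi)$, so $(\tilde q_T(x, \xi), \tilde q_T(y, \xi'))$ is a coupling of $\delta_x \tilde\pi$ and $\delta_y \tilde\pi$ realizing the bound via \eqref{eq:tv_coupling}.

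Next I verify the hypotheses of Lemma~\ref{lem:overlap}. Well-definedness and invertibility of $\Phi$ follow from Corollary~\ref{cor:MuOr2004} applied to endpoints $\mathsf a = y$ and $\mathsf b = \tilde q_T(x, v)$: since $LT^2 \le L(T^2 + Th) \le 1/6 < (2/5)\pi^2$, a unique Verlet solution of the two-point boundary value problem exists, and $\Phi(v)$ is its initial velocity. Differentiability and the operator-norm bound $\mnorm{D\Phi(v) - I_d} \le 1/2$ come from the implicit function theorem applied to $\tilde q_T(y, \Phi(v)) = \tilde q_T(x, v)$, together with standard estimates showing $\partial_v \tilde q_T$ differs from $T I_d$ by $O(T^3 L)$ under our smallness hypothesis.

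The quantitative heart is the estimate $E[\norm{\Phi(\xi) - \xi}^2 + 2\mnorm{D\Phi(\xi) - I_d}_F^2] \le (9/4)(T^{-2} + 27 d L_H^2 T^4)\norm{x - y}^2$, which combined with Lemma~\ref{lem:overlap} yields the claim. To estimate $\Phi(v) - v$, I consider the family of Verlet trajectories with $\mathsf a(\lambda) = (1-\lambda)x + \lambda y$ and initial velocity $\mathsf v(\lambda)$ chosen so that $\tilde q_T(\mathsf a(\lambda), \mathsf v(\lambda)) = \tilde q_T(x, v)$ for all $\lambda \in [0,1]$; then $\mathsf v(0) = v$, $\mathsf v(1) = \Phi(v)$, and differentiating in $\lambda$ gives $\mathsf v'(\lambda) = -[\partial_v \tilde q_T]^{-1}\partial_x \tilde q_T\,(y - x)$. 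Using $\partial_v \tilde q_T = T I_d + O(T^3 L)$ and $\partial_x \tilde q_T = I_d + O(T^2 L)$, integration yields $\Phi(v) - v \approx (x-y)/T$, producing the $T^{-2}\norm{x-y}^2$ contribution. For the Frobenius term, differentiating the implicit relation in $v$ shows that $D\Phi(v) - I_d$ is controlled by the difference $\partial_v \tilde q_T(x, v) - \partial_v \tilde q_T(y, \Phi(v))$, which by the mean value theorem involves second variations of $\tilde q_T$ and hence $\nabla^3 U$ (bounded by $L_H$). Taking expectation and using $E\norm{\xi}^2 = d$ then produces the $d L_H^2 T^4$ contribution. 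The main obstacle will be the delicate perturbative analysis of the Verlet Jacobian over $N = T/h$ steps: tracking how third-derivative contributions propagate through the recursion, summing the resulting geometric series using $L(T^2 + Th) \le 1/6$, and extracting the explicit constants $9/4$ and $27$ require careful bookkeeping, most naturally organized via the discrete Euler-Lagrange system of Section~\ref{sec:vi} and Lemma~\ref{lem:MuOr2004}.
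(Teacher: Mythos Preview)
Your proposal follows the same overall framework as the paper: establish the first inequality by a coupling/pushforward argument, then invoke Lemma~\ref{lem:overlap} and feed in pointwise bounds on $\norm{\Phi(v)-v}$ and $\mnorm{D\Phi(v)-I_d}$. The paper packages those two bounds as Lemmas~\ref{lem:oneshot:1:a} and~\ref{lem:oneshot:1:b}, obtained by writing the integrated Verlet relation $\tilde q_T(x,v)=\tilde q_T(y,\Phi(v))$ directly and using the a priori estimates of Lemmas~\ref{lem:apriori} and~\ref{lem:apriori2}. Your continuation argument $\lambda\mapsto\mathsf a(\lambda)$ is an alternative route to the same $T\norm{\Phi(v)-v}\le (3/2)\norm{x-y}$ bound; it works, but the paper's direct estimate from the integral equation is shorter and avoids having to control the Jacobian along the whole path.

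One point needs correction. You attribute the factor $d$ in the $dL_H^2T^4$ term to taking expectation and using $E\norm{\xi}^2=d$. That is not where it comes from. The paper's Lemma~\ref{lem:oneshot:1:b} gives a bound $\mnorm{D\Phi(v)-I_d}\le (11/2)L_HT^2\norm{x-y}$ that is \emph{uniform in $v$}; the $d$ enters purely through the inequality $\mnorm{D\Phi(v)-I_d}_F^2\le d\,\mnorm{D\Phi(v)-I_d}^2$, with no expectation over $\xi$ needed at all. If you instead try to bound the Frobenius norm directly and allow $v$-dependence that brings in $\norm{v}$, you will get an extra factor that does not match the stated constant. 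So when you carry out the ``delicate perturbative analysis'' you mention, aim for a uniform-in-$v$ operator norm bound first, then convert to Frobenius.
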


\begin{proof}
Since $\tilde{q}_T: \mathbb{R}^{2 d} \to \mathbb{R}^d$ is deterministic and measurable \cite[Lemma 3]{madras2010quantitative}, \begin{align}
\TV{\delta_x \tilde \pi}{\delta_y \tilde \pi} &= \TV{\law(\tilde{q}_T(y,\xi))}{\law(\tilde{q}_T(y,\Phi(\xi)))} \nonumber \\
&\le  \TV{\law(\xi)}{\law(\Phi(\xi))} \;,
\end{align} 
which gives the first inequality in \eqref{tvbound1}.
Moreover, by Lemmas~\ref{lem:overlap},~\ref{lem:oneshot:1:a} and~\ref{lem:oneshot:1:b}, we have \[
 \TV{\law(\xi)}{\law(\Phi(\xi)}^2 \ \le \
\left( (9/4) \, T^{-2}  + (121/2) \, d \, L_H^2 \, T^4 \right)  \, |x-y|^2
\] 
where we used $\mnorm{D\Phi(v)-I_d}_F^2 \le d \mnorm{D\Phi(v)-I_d}^2$.  Taking square roots and inserting $ 27> (4/9)(121/2)$ gives the second inequality in \eqref{tvbound1}.
\end{proof}

 Lemma~\ref{lem:overlap1} implies that the transition kernel of uHMC has a regularizing effect in the following sense (cf. Theorem 12 (a) of \cite{madras2010quantitative}).  

\begin{lem} \label{lem:regularization}
Suppose Assumption~\ref{A1234} (A1)-(A3) hold and that the duration $T>0$ and time step size $h \ge 0$ satisfy $L(T^2 + T h) \le 1/6$ and $T/h \in \mathbb{N}$.  For any  $\nu,\eta \in \mathcal{P}(\mathbb{R}^d)$,
\begin{equation} \label{eq:regularization}
\TV{\eta \tilde \pi}{\nu \tilde \pi} \ \le \ 
 \frac{3}{2} \, \left( T^{-2}  + 27  d L_H^2  T^4 \right)^{1/2} \W^1(\eta, \nu)  \;.
\end{equation}
\end{lem}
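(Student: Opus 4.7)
The plan is to reduce this from the pointwise bound already established in Lemma~\ref{lem:overlap1} by a standard convexity/coupling argument. The key observation is that the total variation distance is convex in both arguments, so for any coupling $\gamma\in\J(\eta,\nu)$ we can write $\eta\tilde\pi = \int \delta_x\tilde\pi\,\gamma(dx,dy)$ and $\nu\tilde\pi = \int \delta_y\tilde\pi\,\gamma(dx,dy)$, and then Jensen's inequality (or, equivalently, the coupling characterization \eqref{eq:tv_coupling} combined with the gluing lemma) yields
\[
\TV{\eta\tilde\pi}{\nu\tilde\pi} \ \le\ \int \TV{\delta_x\tilde\pi}{\delta_y\tilde\pi}\,\gamma(dx,dy).
\]

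Given this, the rest of the argument is immediate. First I would apply Lemma~\ref{lem:overlap1} under the integral to obtain
\[
\TV{\eta\tilde\pi}{\nu\tilde\pi}\ \le\ \frac{3}{2}\left(T^{-2}+27\,d\,L_H^2\,T^4\right)^{1/2}\int |x-y|\,\gamma(dx,dy).
\]
Then I would take the infimum over all couplings $\gamma\in\J(\eta,\nu)$ on the right-hand side; by the definition of the $L^1$-Wasserstein distance with respect to the Euclidean metric this infimum equals $\W^1(\eta,\nu)$, which yields \eqref{eq:regularization}.

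I do not anticipate any real obstacle here: the hypotheses $L(T^2+Th)\le 1/6$ and $T/h\in\mathbb N$ are exactly those required to invoke Lemma~\ref{lem:overlap1} pointwise, and the constant out front is inherited directly without change. The only modest subtlety is the justification of the convex-combination inequality for TV, which can be supplied either by observing that $A\mapsto|\eta\tilde\pi(A)-\nu\tilde\pi(A)|\le\int|\delta_x\tilde\pi(A)-\delta_y\tilde\pi(A)|\gamma(dx,dy)$ holds for every Borel $A$ and then taking the supremum, or by constructing a joint coupling of $(X,Y,X',Y')$ such that $\law(X,Y)=\gamma$ and, conditionally on $(X,Y)=(x,y)$, $(X',Y')$ realizes the TV-optimal coupling of $(\delta_x\tilde\pi,\delta_y\tilde\pi)$ guaranteed by Lemma~\ref{lem:overlap1} (applied through \eqref{eq:tv_coupling}), using a measurable selection argument to ensure the latter can be chosen in a jointly measurable way.
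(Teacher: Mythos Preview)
Your proposal is correct and follows essentially the same approach as the paper: take an arbitrary coupling $\gamma\in\J(\eta,\nu)$, use convexity of TV (or the coupling characterization \eqref{eq:tv_coupling}) to bound $\TV{\eta\tilde\pi}{\nu\tilde\pi}$ by $\int\TV{\delta_x\tilde\pi}{\delta_y\tilde\pi}\,\gamma(dx,dy)$, apply Lemma~\ref{lem:overlap1} pointwise, and then infimize over couplings to get $\W^1(\eta,\nu)$. The paper's proof is the same argument written slightly more tersely.
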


\begin{proof}
Let $\omega$ be an arbitrary coupling of $\nu, \eta$.
By the coupling characterization of the TV distance in \eqref{eq:tv_coupling}, 
\begin{align*}
\TV{\eta \tilde \pi}{\nu \tilde \pi} \le E[ \TV{\delta_X \tilde \pi}{\delta_Y \tilde \pi} ] \le  \frac{3}{2} \, \left( T^{-2}  + 27  d L_H^2  T^4 \right)^{1/2} E[ \norm{X - Y} ] 
\end{align*}
where $\law(X,Y) = \omega$ and in the last step we inserted \eqref{tvbound1} in Lemma~\ref{lem:overlap1}.  Since $\omega$ is arbitrary, we can take the infimum over all $\omega \in \J(\nu,\eta)$ to obtain \eqref{eq:regularization}.  
\end{proof}

To bound the bias in the invariant measure of uHMC, we bound the TV distance between $\mu \tilde{\pi}$ and $\mu \pi$ by using the one-shot coupling illustrated in Figure~\ref{fig:coupling}(b).


\begin{lem} \label{lem:overlap2}
Suppose Assumption~\ref{A1234} holds and that the duration $T>0$ and time step size $h \ge 0$ satisfy $L(T^2 + T h) \le 1/6$ and $T/h \in \mathbb{N}$. 
For any $x,v \in \mathbb{R}^d$, let $\Phi: \mathbb{R}^d \to \mathbb{R}^d$ be the map defined by $\tilde{q}_T(x,v) = q_T(x,\Phi(v))$.  Then
\begin{equation} \label{tvbound2}
\begin{aligned}
& \TV{\delta_x \pi}{\delta_x \tilde \pi} \ \le \ \TV{\law(\xi)}{\law(\Phi(\xi))} \\ 
& \ \le \
h^2 \, \Big[  d^3 \big( 4 L_I^2 T^4 + 14 L_H^2 L_I T^6 + 14 L_H^4 T^8 \big) \\
& \qquad+ d^2 \big( 35 L_H^2 T^2 + 8 L_I^2 T^4 + 28 L_H^2 L_I T^6 + 28 L_H^4 T^8 \big) \\
& \qquad+ d \big( 16 L^2 + 4 L_H^2 T^2) + \big(2 d L_H^2  + L^2 T^{-2}  \big) \norm{x}^2 \\ 
& \qquad + \big( d L_I^2 + d L_H^2 L_I T^2 + d L_H^4 T^4 + L_H^2 T^{-2} \big) \norm{x}^4   \Big]^{1/2} \;.
\end{aligned}
\end{equation}
\end{lem}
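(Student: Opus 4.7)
The plan is to execute the one-shot coupling depicted in Figure~\ref{fig:coupling}(b), comparing $\delta_x\pi$ and $\delta_x\tilde\pi$ by comparing the laws of the initial velocities required to reach the same final position under exact versus Verlet Hamiltonian flow. The argument parallels the proof of Lemma~\ref{lem:overlap1}, with the discretization gap between Verlet and the exact flow playing the role previously played by the spatial offset from $x$ to $y$. First I would verify that $\Phi$ is a $C^1$-diffeomorphism of $\mathbb R^d$: Corollary~\ref{cor:MuOr2004} implies strong convexity of the Verlet action sum and hence that $v\mapsto \tilde q_T(x,v)$ has a $C^1$-inverse, while the analogous (simpler) continuous-time variational argument does the same for $v\mapsto q_T(x,v)$. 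Consequently $\Phi=q_T(x,\cdot)^{-1}\circ\tilde q_T(x,\cdot)$ is well defined. The first inequality in \eqref{tvbound2} then follows from the fact that TV is nonincreasing under measurable maps, since $\delta_x\tilde\pi=\law(q_T(x,\Phi(\xi)))$ and $\delta_x\pi=\law(q_T(x,\xi))$.

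To obtain the second inequality, the plan is to apply Lemma~\ref{lem:overlap} to $\Phi$, which reduces the task to bounding $E[\norm{\Phi(\xi)-\xi}^2]+2E[\mnorm{D\Phi(\xi)-I_d}_F^2]$. The required hypothesis $\mnorm{D\Phi(v)-I_d}\le 1/2$ will fall out of the Jacobian estimate of the last paragraph, combined with the step-size constraint $L(T^2+Th)\le 1/6$.

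The easier of the two contributions is $E[\norm{\Phi(\xi)-\xi}^2]$. Differentiating the defining relation $q_T(x,\Phi(v))=\tilde q_T(x,v)$ and applying the mean value theorem yields
\begin{equation*}
\Phi(v)-v \ = \ [D_v q_T(x,v^\star)]^{-1}\bigl(\tilde q_T(x,v)-q_T(x,v)\bigr)
\end{equation*}
for some intermediate $v^\star$. Under the step-size condition $D_v q_T$ is uniformly invertible with norm of order $T$ for small $T$, so the inverse contributes the factor $T^{-1}$ visible in the $T^{-2}\norm{x}^2$ and $T^{-2}\norm{x}^4$ pieces of the final bound. The strong accuracy $\norm{\tilde q_T(x,v)-q_T(x,v)}=O(h^2)$ is the standard global error of velocity Verlet: Taylor expansion of a single step against the exact flow gives local truncation errors controlled by $\nabla U$, $\mathbf H$ and $\differential\mathbf H$, and a Gr\"onwall-type accumulation over $T/h$ steps produces the global bound with coefficients involving $L$ and $L_H$. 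Taking the expectation over $\xi$ using $E\norm{\xi}^2=d$, $E\norm{\xi}^4\le 3d^2$, and an a priori bound $\norm{q_t(x,\xi)}\lesssim \norm{x}+T\norm{\xi}$, delivers the terms in \eqref{tvbound2} carrying $L^2$, $L_H^2$, $\norm{x}^2$ and $\norm{x}^4$.

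The main obstacle is the Jacobian term $E[\mnorm{D\Phi(\xi)-I_d}_F^2]$. Writing $D\Phi(v)=[D_v q_T(x,\Phi(v))]^{-1}D_v\tilde q_T(x,v)$, the task reduces to comparing the variational equation of the exact flow,
\begin{equation*}
\ddot\zeta_t \ = \ -\mathbf H(q_t)\,\zeta_t,
\end{equation*}
with its Verlet discretization. One derivative has already been spent to obtain the variational equation, so the local truncation error for the Jacobian involves $\differential^2\mathbf H$ as well, which is exactly where $L_I$ enters the final bound. A further Gr\"onwall argument then produces $\mnorm{D\Phi(v)-I_d}=O(h^2)$ with coefficients depending on $L$, $L_H$, $L_I$, $T$, and on moments of $q_t$ and $\dot q_t$ along the trajectory. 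Passing to the Frobenius norm loses a factor $d$ via $\mnorm{A}_F^2\le d\mnorm{A}^2$, and a subsequent factor $d$ from $E\norm{\xi}^2$ is the source of the $d^3$ terms in \eqref{tvbound2}. Collecting this estimate with the position-error bound of the third paragraph and factoring out $h^2$ from the square root yields the right-hand side of \eqref{tvbound2}.
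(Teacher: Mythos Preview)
Your plan is correct and mirrors the paper's proof: the first inequality is data processing under $q_T(x,\cdot)$, and the second follows from Lemma~\ref{lem:overlap} combined with explicit $O(h^2)$ bounds on $\norm{\Phi(v)-v}$ and $\mnorm{D\Phi(v)-I_d}$, which the paper records as Lemmas~\ref{lem:oneshot:2:a} and~\ref{lem:oneshot:2:b} (the latter built on the Verlet-error Lemmas~\ref{lem:verlet_error:1} and~\ref{lem:verlet_error:2}). One minor correction to your dimension count: the $d^3$ prefactor arises from the Frobenius factor $d$ times $E\norm{\xi}^4\sim d^2$, not from $E\norm{\xi}^2$.
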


\begin{proof}
Since $q_T: \mathbb{R}^{2 d} \to \mathbb{R}^d$ is deterministic and measurable \cite[Lemma 3]{madras2010quantitative}, \begin{align}
\TV{\delta_x  \pi}{\delta_x \tilde \pi} &= \TV{\law(q_T(x,\xi))}{\law(q_T(x,\Phi(\xi)))} \nonumber \\
&\le  \TV{\law(\xi)}{\law(\Phi(\xi))} \;,
\end{align} 
which gives the first inequality in \eqref{tvbound2}.
 By Lemmas~\ref{lem:overlap},~\ref{lem:oneshot:2:a} and~\ref{lem:oneshot:2:b}, $\mnorm{D\Phi(v)-I_d}_F^2 \le d \mnorm{D\Phi(v)-I_d}^2$, and the Cauchy-Schwarz inequality we have \begin{align*}
& \TV{\law(\xi)}{\law(\Phi(\xi)}^2
\ \le \ E\left[  \norm{\Phi(\xi)-\xi}^2 + 2 d \mnorm{D \Phi(\xi) - I_d}^2 \right] \\
& \quad \le \  h^4 \, E\Big[  10 d L^2 
 +  \big( 2 d L_H^2 + L^2 T^{-2} \big) \norm{x}^2  + \big( 6 L^2 + 33 d L_H^2 T^2  \big) \norm{\xi}^2 \\
& \quad \qquad + \big( d L_I^2 + L_H^2 T^{-2} + d L_H^2 L_I T^2 + d L_H^4 T^4 \big) \norm{x}^4  \\
&\quad \qquad + \big( L_H^2 T^2 + 2 d L_I^2 T^4 + 7 d L_H^2 L_I T^6 + 7 d L_H^4 T^8 \big) \norm{\xi}^4 \Big] \\ 
& \quad \le \  h^4 \, \Big[  d^3 \big( 4 L_I^2 T^4 + 14 L_H^2 L_I T^6 + 14 L_H^4 T^8 \big) \\
& \quad \qquad+ d^2 \big( 35 L_H^2 T^2 + 8 L_I^2 T^4 + 28 L_H^2 L_I T^6 + 28 L_H^4 T^8 \big) \\
& \quad \qquad+ d \big( 16 L^2 + 4 L_H^2 T^2) + \big(2 d L_H^2  + L^2 T^{-2}  \big) \norm{x}^2 \\ 
& \quad \qquad + \big( d L_I^2 + d L_H^2 L_I T^2 + d L_H^4 T^4 + L_H^2 T^{-2} \big) \norm{x}^4   \Big]
\end{align*} 
where in the last step we used 
$E[ \norm{\xi}^2 ] = d$ and
$E[ \norm{\xi}^4 ] = 2 d (d+2)$.
Taking square roots gives the second inequality in \eqref{tvbound2}.
\end{proof}

 Lemma~\ref{lem:overlap2} implies the following bound on the TV distance between $\mu \pi$ and $\mu \tilde \pi$.  

\begin{lem} \label{lem:tv_strong_error}
Suppose Assumption~\ref{A1234} holds and that the duration $T>0$ and time step size $h \ge 0$ satisfy $L(T^2 + T h) \le 1/6$ and $T/h \in \mathbb{N}$. 
 Then
$\ \TV{\mu \pi}{\mu \tilde \pi}  \le C  h^2$,
where $C$ is defined in \eqref{eq:C}.
\end{lem}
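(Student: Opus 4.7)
The plan is to deduce the bound on $\TV{\mu\pi}{\mu\tilde\pi}$ from the pointwise estimate already established in Lemma~\ref{lem:overlap2}. The natural coupling of $\mu\pi$ and $\mu\tilde\pi$ is the ``identity'' coupling: sample $X \sim \mu$ once and then evolve the two chains from the same state. Applying the coupling characterization \eqref{eq:tv_coupling} of TV distance together with the trivial pointwise bound $\TV{\delta_X\pi}{\delta_X\tilde\pi}$ gives
\begin{equation*}
\TV{\mu\pi}{\mu\tilde\pi} \ \le\ \int \TV{\delta_x\pi}{\delta_x\tilde\pi}\,\mu(dx).
\end{equation*}

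Next, I would insert the bound from Lemma~\ref{lem:overlap2}. That bound has the form $h^2\,\sqrt{A(x)}$ where $A(x)$ is an affine-in-$\|x\|^2$ plus quadratic-in-$\|x\|^2$ polynomial (with coefficients $d^3,d^2,d$ and $\|x\|^2,\|x\|^4$ terms, as spelled out in \eqref{tvbound2}). Since the square root is concave, Jensen's inequality yields
\begin{equation*}
\int \sqrt{A(x)}\,\mu(dx)\ \le\ \sqrt{\int A(x)\,\mu(dx)},
\end{equation*}
which is the key move that pulls the $\mu$-integration through the square root and allows the moments $\int\|x\|^2\mu(dx)$ and $\int\|x\|^4\mu(dx)$ to appear directly inside $C$.

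Combining these two steps,
\begin{equation*}
\TV{\mu\pi}{\mu\tilde\pi}\ \le\ h^2\sqrt{\int A(x)\,\mu(dx)},
\end{equation*}
and by linearity of the integral each term in $A(x)$ is replaced by its $\mu$-expectation: constants stay as constants, $\|x\|^2$ becomes $\int\|x\|^2\mu(dx)$, and $\|x\|^4$ becomes $\int\|x\|^4\mu(dx)$. A direct comparison of the resulting expression with the definition \eqref{eq:C} shows it equals exactly $h^2 C$, completing the argument.

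The proof is essentially a one-line application of Lemma~\ref{lem:overlap2} combined with Jensen's inequality, so no substantive obstacle is anticipated; the only point to verify carefully is that the term-by-term bookkeeping between the right-hand side of \eqref{tvbound2} after $\mu$-integration and the expression \eqref{eq:C} indeed agrees, which is a mechanical check since the $d^3$, $d^2$, $d$, and moment coefficients in the two formulas match term-for-term by construction.
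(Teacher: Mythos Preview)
Your proposal is correct and follows essentially the same approach as the paper: the paper states that the proof is analogous to that of Lemma~\ref{lem:regularization}, which is precisely the coupling-characterization argument you describe (use the identity coupling $X\sim\mu$, bound by $\int\TV{\delta_x\pi}{\delta_x\tilde\pi}\,\mu(dx)$, and insert Lemma~\ref{lem:overlap2}). The only additional ingredient needed to match the stated constant $C$ in \eqref{eq:C} is the Jensen step $\int\sqrt{A(x)}\,\mu(dx)\le\sqrt{\int A(x)\,\mu(dx)}$, which you correctly identify.
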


The proof of this result is similar to the proof of Lemma~\ref{lem:regularization} and therefore omitted.

\begin{figure}[t]
\centering
\begin{minipage}{\textwidth} 
      \begin{minipage}[b]{0.50\textwidth}
\begin{tikzpicture}[scale=1.]
\begin{scope}[very thick] 
       \tikzmath{
       \t=1.0; \x1 = 0.0; \x2 =-4.0; \y1=0.5; \y2=-4.5;  \u1=4.; \u2=0.0; \om=1.;
                      \v1=\u1+\om*cos(deg(\om*\t))/sin(deg(\om*\t))*(\x1-\y1); 
                      \v2=\u2+\om*cos(deg(\om*\t))/sin(deg(\om*\t))*(\x2-\y2); 
                      \z1=cos(deg(\om*\t))*\x1+sin(deg(\om*\t))*\u1/\om;
                      \z2=cos(deg(\om*\t))*\x2+sin(deg(\om*\t))*\u2/\om;
                      } 
                       \draw  [color=black,dashed,smooth,domain=0:\t,samples=10,variable=\xx]  plot ({cos(deg(\om*\xx))*\x1+sin(deg(\om*\xx))*\u1/\om},{cos(deg(\om*\xx))*\x2+sin(deg(\om*\xx))*\u2/\om});          
                       \draw  [color=red,dashed,smooth,domain=0:\t,samples=10,variable=\x]  plot ({cos(deg(\om*\x))*\y1+sin(deg(\om*\x))*\v1/\om},{cos(deg(\om*\x))*\y2+sin(deg(\om*\x))*\v2/\om});
\filldraw[color=black,fill=black] (\x1,\x2) circle (0.1) node [left,black, scale=1.5]  {$x$};
\filldraw[color=black,fill=black] (\y1,\y2) circle (0.1) node [left,black, scale=1.5]  {$y$};
\filldraw[color=black,fill=black] (\z1,\z2) circle (0.1) node [above right,black, scale=1.5] {$\tilde q_T(x,\xi)$};
\draw[->,-{Latex[length=3mm]}](\x1,\x2)--(\x1+0.25*\u1,\x2+0.25*\u2) node [above,black, scale=1.5] {$\xi$};
\draw[->,-{Latex[length=3mm]}](\y1,\y2)--(\y1+0.25*\v1,\y2+0.25*\v2) node [below right,black, scale=1.5] {$\Phi(\xi)$};
 \end{scope}
\end{tikzpicture}
\end{minipage}
      \begin{minipage}[b]{0.50\textwidth}
\begin{tikzpicture}[scale=1.0]
\begin{scope}[very thick] 
       \tikzmath{
       \t=1.0; \x1 = 0.0; \x2 =-4;  \u1=4; \u2=0.0; \y1=\x1; \y2=\x2; \om=1.5;
                      \v1=sin(deg(\t))/sin(deg(\om*\t))*\u1+cos(deg(\t))/sin(deg(\om*\t))*\x1-cos(deg(\om*\t))/sin(deg(\om*\t))*\y1); 
                      \v2=sin(deg(\t))/sin(deg(\om*\t))*\u2+cos(deg(\t))/sin(deg(\om*\t))*\x2-cos(deg(\om*\t))/sin(deg(\om*\t))*\y2; 
                      \z1=cos(deg(\t))*\x1+sin(deg(\t))*\u1;
                      \z2=cos(deg(\t))*\x2+sin(deg(\t))*\u2;
                      } 
                                  \draw  [color=black,dashed,smooth,domain=0:\t,samples=10,variable=\xx]  plot ({cos(deg(\xx))*\x1+sin(deg(\xx))*\u1},{cos(deg(\xx))*\x2+sin(deg(\xx))*\u2});            
                       \draw  [color=red,dashed,smooth,domain=0:\t,samples=10,variable=\x]  plot ({cos(deg(\om*\x))*\y1+sin(deg(\om*\x))*\v1},{cos(deg(\om*\x))*\y2+sin(deg(\om*\x))*\v2});
\filldraw[color=black,fill=black] (\x1,\x2) circle (0.1) node [left,black, scale=1.5]  {$x$};
\filldraw[color=black,fill=black] (\z1,\z2) circle (0.1) node [above right,black, scale=1.5] {$\tilde q_T(x,\xi)$};
\draw[->,-{Latex[length=3mm]}](\x1,\x2)--(\x1+0.25*\u1,\x2+0.25*\u2) node [above,black, scale=1.5] {$\xi$};
\draw[->,-{Latex[length=3mm]}](\y1,\y2)--(\y1+0.25*\v1,\y2+0.25*\v2) node [below right,black, scale=1.5] {$\Phi(\xi)$};
 \end{scope}
\end{tikzpicture}
\end{minipage}
\end{minipage}
\begin{minipage}{\textwidth}
      \begin{minipage}[b]{0.48\textwidth}
      \centering
{\normalsize (a) $\tilde q_T(x,\xi) = \tilde q_T(y, \Phi(\xi))$}
\end{minipage}
\begin{minipage}[b]{0.48\textwidth}
\centering
{\normalsize (b) $\tilde q_T(x,\xi) =  q_T(x, \Phi(\xi))$}
\end{minipage}
\end{minipage}
\caption{\small {\bf  One-shot couplings.} (a) To obtain TV convergence bounds for uHMC, the initial velocities are coupled such that $\tilde q_T(x,\xi) = \tilde q_T(y, \Phi(\xi))$ with maximal possible probability.  (b) For TV accuracy of the invariant measure of uHMC, the initial velocities are coupled such that $\tilde q_T(x,\xi) =  q_T(x, \Phi(\xi))$ with maximal possible probability. }
  \label{fig:coupling}
\end{figure}
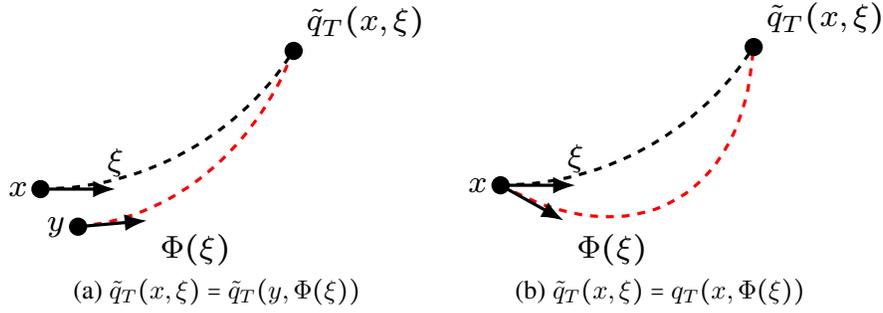

\subsection{Successful Coupling for uHMC}\label{sec:main_results:couplings}

The TV bound in Theorem~\ref{thm:tv_to_equilibrium} can be realized by a successful coupling for uHMC which combines a coupling that brings the two copies arbitrarily close together and the following one-shot coupling construction.

\medskip

Let $\mathcal U \sim \text{Unif}(0,1)$ and $\xi \sim \mathcal{N}(0,I_d)$ be independent random variables. 
For notational convenience, set \begin{equation} 
\alpha(v) = 	\min\Big( \dfrac{\varphi(\Phi(v)) |\det(D \Phi(v))|}{\varphi(v)} , 1 \Big)  \;,
\end{equation}
where for any $x,y,v \in \mathbb{R}^d$ we define $\Phi$ via \begin{equation}
\tilde q_T(x,v) =\tilde q_T(y,\Phi(v)) \;.
\end{equation} 

The transition step of the one-shot coupling is given by \begin{equation}\label{**} \begin{cases} 		\tilde{X}(x,y) \ = \ \tilde{q}_T(x,\xi) , \\		\tilde{Y}(x,y) \ = \ \tilde{q}_T(y,\eta), \end{cases} \quad \text{where} \quad \eta \:= \ \begin{cases} 		\Phi(\xi)  & \text{if } \ \mathcal U \ \leq \ 		\alpha(\xi),		\\ 		\tilde \eta  & \text{otherwise},		\end{cases} \end{equation} where $\tilde \eta$ is independent of $\xi$ and $\mathcal{U}$ and satisfies  \begin{equation}
\law( 
\tilde{\eta})(dv)  =  \frac{(\varphi(v) -  \varphi(\Phi^{-1}(v)) |\det(D\Phi^{-1}(v))|)^+}{1-\gamma}  dv 
\end{equation}
where $\gamma = E[\alpha(\xi)]$.
By definition of $\Phi$, $\tilde{q}_T(x,\xi) = \tilde{q}_T(x,\Phi(\xi))$.

In order to verify that $(\tilde{X}(x,y),\tilde{Y}(x,y))$ is indeed a coupling of the transition probabilities $\tilde{\pi} (x,\cdot )$ and $\tilde{\pi} (y,\cdot ) $, we remark that the distribution of $\eta$ is  $\mathcal{N}(0,I_d)$ since, by definition of $\eta$ in \eqref{**} and a change of variables, \begin{eqnarray*} P[ \eta \in B ] &  =& E \left[ I_B( \Phi(\xi))\,  \alpha(\xi) \right] \, + \, E \left[ I_B( \tilde{\eta} ) 
\right] \, E \left[  1 - \alpha(\xi)  \right] \\ &=& \int I_B( v )\,   \min(\varphi(\Phi^{-1}(v)) |\det(D\Phi^{-1}(v))|, \varphi(v)) \,dv\\ &&\quad +\int I_B( v )  (\varphi(v) - \varphi(\Phi^{-1}(v)) |\det(D\Phi^{-1}(v))|)^+\, dv\\ &=&\int I_B(v)\,\varphi(v)\, dv\   =\ P[\xi \in B]  \end{eqnarray*} for any measurable set $B$.
As a byproduct of this calculation, note also that \begin{align*} P[ \eta \ne \Phi(\xi) ] \ &= \ \int \left( \varphi( v)- \varphi(\Phi^{-1}(v)) |\det(D\Phi^{-1}(v))| \right)^+ dv \\ 
\ &= \ \TV{\law(\xi)}{\law(\Phi(\xi))} \;. \end{align*} 
This calculation shows that the one-shot coupling in \eqref{**} ensures that $\eta=\Phi(\xi)$ with maximal possible probability, and with remaining probability, $
\eta = \tilde{\eta}$ with $\tilde{\eta} \perp \xi$.  Thus, the coupling coalesces in one step whenever $\eta=\Phi(\xi)$, as illustrated in Figure~\ref{fig:coupling}(a).

\section{A priori estimates}

Here we state several bounds for the dynamics \eqref{velVerlet} that are crucial in the proof of our main results.  Throughout this section, we fix a duration parameter $T>0$ and a time step size $h>0$ such that
$T/h\in\mathbb N$ and 
\begin{equation}
    \label{eq::Tconstraint}
    L(T^2 + T h) \ \le\ 1/6\, .
\end{equation}

\subsection{A priori estimates for the dynamics}

\begin{lem} \label{lem:apriori}
Suppose that Assumption~\ref{A1234} (A1)-(A2) hold. For any $x,y,u,v\in\R^d$,
	\begin{align}
	\label{apriori:1}
& \max_{s\leq T}\norm{\tilde{q}_s(x,v)} \, \leq \, (1+L (T^2+T h)) \max(\norm{x},\norm{x+T v}),\\
\label{apriori:2}
& \max_{s\leq T}\norm{\tilde{v}_s(x,v)} \, \leq \,
\norm{v} + L T (1+L (T^2+T h))  \max(\norm{x},\norm{x+T v}), \\ 
\begin{split}
& \max_{s\leq T} \norm{\tilde{q}_s(x,u)-\tilde{q}_s(y,v)} \label{apriori:3} \\
& \qquad \leq \, (1+ L (T^2+T h))\max\left(\norm{x-y},\norm{x-y+T (u-v)}\right) .
\end{split}
\end{align}
\end{lem}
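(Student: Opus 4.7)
The plan is to decompose the Verlet trajectory as a linear interpolation plus a correction governed by $\nabla U$, and then close a self-consistency inequality via convexity of the norm. Telescoping the velocity half-steps (the second equation in \eqref{velVerlet}) gives, for every grid point,
\[
\tilde v_{t_k} \ =\ v - \tfrac{1}{2}\int_0^{t_k}\bigl(\nabla U(\tilde q_{\lb r}) + \nabla U(\tilde q_{\ub r})\bigr)\,dr,
\]
and integrating the position ODE in \eqref{velVerlet} therefore yields the representation
\[
\tilde q_t \ =\ x + tv - R_t, \quad R_t \ :=\ \tfrac{1}{2}\int_0^t\!\!\int_0^{\lb s}\!\!\bigl(\nabla U(\tilde q_{\lb r}) + \nabla U(\tilde q_{\ub r})\bigr)\,dr\,ds + \int_0^t (s-\lb s)\,\nabla U(\tilde q_{\lb s})\,ds.
\]
By Assumptions (A1)--(A2) we have $\|\nabla U(q)\| \leq L\|q\|$; combining this with the elementary estimates $\int_0^t \lb s\,ds \leq t^2/2$ and $\int_0^t (s-\lb s)\,ds \leq th/2$ gives $\|R_t\|\leq \tfrac{L}{2}(T^2+Th)\,M$, where $M := \max_{s\leq T}\|\tilde q_s\|$.

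The key observation is the convex-combination identity $x+tv = (1-t/T)\,x + (t/T)(x+Tv)$, valid for $t\in [0,T]$, which yields $\|x+tv\| \leq M_0 := \max(\|x\|,\|x+Tv\|)$ by the triangle inequality. Combined with the bound on $R_t$, one gets $\|\tilde q_t\| \leq M_0 + \tfrac{L}{2}(T^2+Th)\,M$ for all $t\in[0,T]$; taking the supremum and solving yields $M \leq M_0/(1 - L(T^2+Th)/2)$. A short algebraic check using $a:=L(T^2+Th)\leq 1/6$ shows $(1-a/2)(1+a) = 1+a/2-a^2/2\geq 1$, i.e.\ $1/(1-a/2)\leq 1+a$, which proves \eqref{apriori:1}. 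For \eqref{apriori:2}, the telescoped formula for $\tilde v_{t_k}$ together with $\|\nabla U(q)\|\leq L\|q\|$ immediately gives $\|\tilde v_{t_k}\| \leq \|v\| + LTM$; piecewise-linearity of $\tilde v_t$ between grid points and convexity of $\|\cdot\|$ extend this to every $t\in [0,T]$, and substituting the bound on $M$ yields \eqref{apriori:2}.

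Bound \eqref{apriori:3} follows by applying the same argument to the difference trajectory. Writing $\tilde q^1 := \tilde q(x,u)$ and $\tilde q^2:=\tilde q(y,v)$, linearity of the integral representation gives $\tilde q^1_t - \tilde q^2_t = (x-y) + t(u-v) - (R^1_t - R^2_t)$, where $R^1_t - R^2_t$ has the same form as $R_t$ but with $\nabla U(\tilde q^j_{\cdot})$ replaced by its difference. Since $\nabla U$ is $L$-Lipschitz by (A2), $\|R^1_t - R^2_t\|\leq \tfrac{L}{2}(T^2+Th)\,D$ with $D := \max_{s\leq T}\|\tilde q^1_s-\tilde q^2_s\|$. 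Convexity applied to $(x-y)+t(u-v)$ bounds the linear part by $\max(\|x-y\|,\|x-y+T(u-v)\|)$, and the same Gr\"onwall-type closure as above delivers the stated constant $1+L(T^2+Th)$. There is no serious obstacle; the only bookkeeping that requires care is keeping the $\lb{\cdot}$ and $\ub{\cdot}$ operators aligned in the nested integral so that the factor $(T^2+Th)/2$ emerges exactly, after which all three bounds fall out simultaneously.
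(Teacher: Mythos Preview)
Your proof is correct. The paper does not give its own argument for this lemma; it simply records that the result is contained in Lemmas~3.1 and~3.2 of \cite{BoEbZi2020} and omits the proof. Your approach---write $\tilde q_t = x+tv - R_t$ via the telescoped velocity formula, bound the remainder by $\tfrac{L}{2}(T^2+Th)\max_{s\le T}\|\tilde q_s\|$ using $\nabla U(0)=0$ and the Lipschitz bound, control the linear part through the convex-combination identity $x+tv=(1-t/T)x+(t/T)(x+Tv)$, and close the self-consistency inequality---is precisely the natural route and is essentially the argument in that reference. The same template carried over to differences gives \eqref{apriori:3}, and your treatment of \eqref{apriori:2} via piecewise linearity of $\tilde v_t$ is also correct.
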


Lemma~\ref{lem:apriori} is contained in Lemmas~3.1 and 3.2 of \cite{BoEbZi2020} and hence a proof is omitted. 

\subsection{A priori estimates for velocity derivative flow}

The next lemma provides similar bounds for $D_2 \tilde{q}_T(x,u) := \partial \tilde{q}_T(x,u) / \partial u$.

\begin{lem} \label{lem:apriori2}
Suppose that Assumption~\ref{A1234} (A1)-(A3) hold. For any $x,y,u,v\in\R^d$,
	\begin{align}
	\label{apriori:5}
&\max_{s\leq T}\mnorm{ D_2 \tilde{q}_s(x,v) } \, \leq \, (6/5) \, T \, , \\
	\label{apriori:6}
&\max_{s\leq T}\mnorm{ D_2 \tilde{v}_s(x,v) } \, \leq \, (6/5) \, , \quad\mbox{and} \\ 
	\begin{split} \label{apriori:7} 	
&\lefteqn{\max_{s\leq T} \mnorm{D_2 \tilde{q}_s(x,u)-D_2 \tilde{q}_s(y,v)}} \\
& \qquad \leq \,  L_H (6/5)^2 T^3 (1+L(T^2+T h))\max\left(\norm{x-y},\norm{(x-y)+T(u-v)}\right) . 
\end{split}
\end{align}
\end{lem}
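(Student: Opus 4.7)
The plan is to express the first variation in velocity, $J_t := D_2\tilde q_t(x,v)$ and $K_t := D_2\tilde v_t(x,v)$, as the solution of the \emph{linearization} of \eqref{velVerlet} in the initial velocity. Differentiating \eqref{velVerlet} in $v$ gives
\begin{align*}
\frac{d}{dt}J_t &= K_{\lb{t}} - (t-\lb{t})\,\mathbf H(\tilde q_{\lb{t}})\,J_{\lb{t}}, \\
\frac{d}{dt}K_t &= -\tfrac12\bigl(\mathbf H(\tilde q_{\lb{t}})J_{\lb{t}} + \mathbf H(\tilde q_{\ub{t}})J_{\ub{t}}\bigr),
\end{align*}
with initial data $J_0=0$ and $K_0=I_d$. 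For any unit vector $e\in\mathbb R^d$, the pair $(J_te,K_te)$ is the solution of a Verlet-type ODE in $\mathbb R^d$ with initial position $0$, initial velocity $e$, and a time-inhomogeneous linear ``force'' $\xi\mapsto\mathbf H(\tilde q_{\lb{s}})\xi$ whose Lipschitz constant in $\xi$ is at most $L$ by (A2). Consequently the estimates \eqref{apriori:1} and \eqref{apriori:2} of Lemma~\ref{lem:apriori} carry over verbatim to this linear problem.

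First I would specialize \eqref{apriori:1} and \eqref{apriori:2} with $x\leftarrow 0$ and $v\leftarrow e$. Since $\max(\|0\|,\|0+Te\|)=T$, this gives
\[
\|J_s e\| \le (1+L(T^2+Th))\,T \le \tfrac{7}{6}T \le \tfrac{6}{5}T,
\qquad
\|K_s e\| \le 1 + LT^2\bigl(1+L(T^2+Th)\bigr) \le \tfrac{43}{36} \le \tfrac{6}{5},
\]
where \eqref{eq::Tconstraint} is used in each inequality. Taking the supremum over unit vectors $e$ promotes Euclidean norms to operator norms and yields \eqref{apriori:5} and \eqref{apriori:6}.

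For \eqref{apriori:7} I would difference two linearized flows $(J^{(i)},K^{(i)})$ along the base trajectories $(\tilde q^{(i)},\tilde v^{(i)})$ starting from $(x,u)$ and $(y,v)$ respectively. Setting $\Delta J_t := J^{(1)}_t - J^{(2)}_t$, $\Delta K_t := K^{(1)}_t - K^{(2)}_t$, and using the splitting
\[
\mathbf H(\tilde q^{(1)}_s)J^{(1)}_s - \mathbf H(\tilde q^{(2)}_s)J^{(2)}_s \ =\ \mathbf H(\tilde q^{(1)}_s)\Delta J_s + \bigl(\mathbf H(\tilde q^{(1)}_s)-\mathbf H(\tilde q^{(2)}_s)\bigr)J^{(2)}_s,
\]
one sees that $(\Delta J,\Delta K)$ satisfies the same linearized Verlet scheme driven by $\mathbf H(\tilde q^{(1)})$, but with \emph{zero} initial data and an external source $R_s := \bigl(\mathbf H(\tilde q^{(1)}_s)-\mathbf H(\tilde q^{(2)}_s)\bigr)J^{(2)}_s$. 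By (A3), \eqref{apriori:3}, and the just-proved \eqref{apriori:5},
\[
\mnorm{R_s} \le L_H\,(1+L(T^2+Th))\,\tfrac{6}{5}T\,\max\bigl(\|x-y\|,\|(x-y)+T(u-v)\|\bigr) \ =:\ S_0.
\]

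The final step is a Gronwall-type closure on the perturbed linearized Verlet system for $(\Delta J,\Delta K)$, mirroring the argument behind \eqref{apriori:3}. Writing $G(T):=\max_{s\le T}\mnorm{\Delta J_s}$ and $H(T):=\max_{s\le T}\mnorm{\Delta K_s}$, integrating the velocity equation yields $H(T)\le T(LG(T)+S_0)$, and integrating the position equation yields an estimate of the shape $G(T)\le TH(T) + L(T^2+Th)G(T) + c\,T^2 S_0$ for a small explicit constant $c$. Substituting and using \eqref{eq::Tconstraint} to absorb $L(T^2+Th)G(T)\le\tfrac16 G(T)$ into the left-hand side produces the amplification $1/(1-L(T^2+Th))\le \tfrac{6}{5}$ and a bound of the form $G(T)\le \tfrac{6}{5}\,T^2 S_0$. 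Inserting the definition of $S_0$ then recovers \eqref{apriori:7}. I expect this Gronwall closure to be the main technical obstacle: the two factors of $6/5$ in the stated constant must emerge separately, one via the a priori bound $\mnorm{J^{(2)}}\le\tfrac{6}{5}T$ that is packaged inside $S_0$, and a second via the Gronwall absorption of $L(T^2+Th)G(T)$.
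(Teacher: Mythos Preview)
Your approach matches the paper's: differentiate Verlet in the initial velocity, close \eqref{apriori:5}--\eqref{apriori:6} by a bootstrap using $LT^2\le 1/6$, and for \eqref{apriori:7} split the difference into a homogeneous piece plus the source $R_s$ controlled by (A3), \eqref{apriori:3}, and \eqref{apriori:5}. Two caveats. First, Lemma~\ref{lem:apriori} is stated only for the nonlinear Verlet flow, so you cannot cite it directly for $(J_te,K_te)$; the paper instead writes out the integrated form \eqref{dervelVerlet} and runs the bootstrap explicitly (your argument does carry over, but it has to be redone rather than quoted). Second, your two-step closure via $H(T)=\max_s\mnorm{\Delta K_s}$ does not quite deliver the constant in \eqref{apriori:7}: bounding $\int_0^T\Delta K_{\lb{s}}\,ds$ by $T\,H(T)$ gives total weight $T^2+Th/2$ on $S_0$, hence $L_H(6/5)^2(T^3+T^2h/2)(1+L(T^2+Th))\max(\cdots)$ rather than the stated $T^3$ factor. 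The paper avoids this by working directly with the second-order integral representation as in \eqref{dervelVerlet}, where the combined weight is $\int_0^T(T-s)\,ds+\int_0^T(s-\lb{s})\,ds=T^2/2+Th/2\le T^2$, and then $LT^2\le 1/6$ produces the second factor of $6/5$. You can recover the exact constant either by switching to that form or by using the sharper time-dependent bound $\mnorm{\Delta K_s}\le s\,(L\,G(T)+S_0)$ inside the position integral instead of the uniform $H(T)$.
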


\begin{proof}
From \eqref{velVerlet}, note that $D_2 \tilde{q}_t(x,u)$ and $D_2 \tilde{v}_t(x,u)$ satisfy: \begin{align} 
\begin{split}
\label{dervelVerlet}
D_2 \tilde{q}_T(x,u)  \ &= \ T I_d - \frac{1}{2} \int_0^T (T - s) \left[ \mathbf{H}(\tilde{q}_{\lb{s}}) D_2 \tilde{q}_{\lb{s}}  + \mathbf{H}(\tilde{q}_{\ub{s}}) D_2 \tilde{q}_{\ub{s}} \right] ds  \\
    & \quad + \frac{1}{2} \int_0^T (s - \lb{s}) \left[ \mathbf{H}(\tilde{q}_{\ub{s}}) D_2 \tilde{q}_{\ub{s}} - \mathbf{H}(\tilde{q}_{\lb{s}}) D_2 \tilde{q}_{\lb{s}} \right] ds \;,  
    \end{split} \\
  D_2 \tilde{v}_T(x,u)  \ &= \ I_d - \frac{1}{2} \int_0^T \left[ \mathbf{H}(\tilde{q}_{\lb{s}}) D_2 \tilde{q}_{\lb{s}}  + \mathbf{H}(\tilde{q}_{\ub{s}}) D_2 \tilde{q}_{\ub{s}} \right] ds  \;. \nonumber
\end{align} Applying Assumption~\ref{A1234} (A2) and inserting $L T^2 \le 1/6$, we obtain \begin{align*}
&\max_{s \le T}  \mnorm{D_2 \tilde{q}_s(x,u)} \ \le \ T \mnorm{I_d} + T^2 \max_{s \le T} \mnorm{\mathbf{H}(\tilde{q}_s) D_2 \tilde{q}_s(x,u)}    \\
 & \qquad \le \ T + L T^2 \max_{s \le T} \mnorm{D_2 \tilde{q}_s(x,u)}  \ \le \ T + (1/6) \max_{s \le T} \mnorm{D_2 \tilde{q}_s(x,u)} \ \le \ (6/5) T \;, \\
&\max_{s \le T}  \mnorm{D_2 \tilde{v}_s(x,u)} \ \le \ 
1 + L T \max_{s \le T} \mnorm{D_2 \tilde{q}_s(x,u)} \ \le \ 1 + L T^2 (6/5) \le 6/5 \;, 
\end{align*} 
which gives \eqref{apriori:5} and 
\eqref{apriori:6}.   For \eqref{apriori:7}, it is notationally convenient to introduce the shorthand $\tilde{q}_s^{(1)} : = \tilde{q}_s(x,u)$ and $\tilde{q}_s^{(2)} : = \tilde{q}_s(y,v)$ for any $s \in [0,T]$.  Then from \eqref{dervelVerlet}  \begin{align*}
& D_2 \tilde{q}_T^{(1)} - D_2 \tilde{q}_T^{(2)}  \ = \ - \frac{1}{2} \int_0^T (T - s) \left[ \mathbf{H}(\tilde{q}_{\lb{s}}^{(1)}) D_2 \tilde{q}_{\lb{s}}^{(1)}  - \mathbf{H}(\tilde{q}_{\lb{s}}^{(2)}) D_2 \tilde{q}_{\lb{s}}^{(2)} \right] ds  \\
&\qquad\qquad\qquad\qquad - \frac{1}{2} \int_0^T (T - s) \left[  \mathbf{H}(\tilde{q}_{\ub{s}}^{(1)}) D_2 \tilde{q}_{\ub{s}}^{(1)} -  \mathbf{H}(\tilde{q}_{\ub{s}}^{(2)}) D_2 \tilde{q}_{\ub{s}}^{(2)} \right] ds  \\
    &\qquad\qquad\qquad\qquad + \frac{1}{2} \int_0^T (s - \lb{s}) \left[ \mathbf{H}(\tilde{q}_{\ub{s}}^{(1)}) D_2 \tilde{q}_{\ub{s}}^{(1)} - \mathbf{H}(\tilde{q}_{\ub{s}}^{(2)}) D_2 \tilde{q}_{\ub{s}}^{(2)}  \right] ds \\
        &\qquad\qquad\qquad\qquad + \frac{1}{2} \int_0^T (s - \lb{s}) \left[ \mathbf{H}(\tilde{q}_{\lb{s}}^{(2)}) D_2 \tilde{q}_{\lb{s}}^{(2)}  - \mathbf{H}(\tilde{q}_{\lb{s}}^{(1)}) D_2 \tilde{q}_{\lb{s}}^{(1)} \right] ds \\
& =  \int_0^T \frac{T - s}{2} \left[ \mathbf{H}(\tilde{q}_{\lb{s}}^{(2)}) ( D_2 \tilde{q}_{\lb{s}}^{(2)} - D_2 \tilde{q}_{\lb{s}}^{(1)} )   - 
( \mathbf{H}(\tilde{q}_{\lb{s}}^{(1)}) -  \mathbf{H}(\tilde{q}_{\lb{s}}^{(2)})) D_2 \tilde{q}_{\lb{s}}^{(1)} \right] ds  \\
& + \int_0^T \frac{T - s}{2} \left[ \mathbf{H}(\tilde{q}_{\ub{s}}^{(2)}) ( D_2 \tilde{q}_{\ub{s}}^{(2)} - D_2 \tilde{q}_{\ub{s}}^{(1)}) - 
( \mathbf{H}(\tilde{q}_{\ub{s}}^{(1)}) - \mathbf{H}(\tilde{q}_{\ub{s}}^{(2)}) )  D_2 \tilde{q}_{\ub{s}}^{(1)}
 \right] ds  \\
    & -  \int_0^T \frac{s - \lb{s}}{2} \left[  \mathbf{H}(\tilde{q}_{\ub{s}}^{(2)}) ( D_2 \tilde{q}_{\ub{s}}^{(2)}  -D_2 \tilde{q}_{\ub{s}}^{(1)})  -  ( \mathbf{H}(\tilde{q}_{\ub{s}}^{(1)}) - \mathbf{H}(\tilde{q}_{\ub{s}}^{(2)}) )  D_2 \tilde{q}_{\ub{s}}^{(1)} \right] ds \\
        & -  \int_0^T \frac{s - \lb{s}}{2} \left[ \mathbf{H}(\tilde{q}_{\lb{s}}^{(1)}) ( D_2 \tilde{q}_{\lb{s}}^{(1)} - D_2 \tilde{q}_{\lb{s}}^{(2)}) - ( \mathbf{H}(\tilde{q}_{\lb{s}}^{(2)}) - \mathbf{H}(\tilde{q}_{\lb{s}}^{(1)}) ) D_2 \tilde{q}_{\lb{s}}^{(2)}    \right] ds \;.
\end{align*}
By using Assumption~\ref{A1234} (A2),   Assumption~\ref{A1234} (A3), and \eqref{apriori:5}, we then get 
\begin{align*}
& \max_{s \le T} \mnorm{D_2 \tilde{q}_s^{(1)} - D_2 \tilde{q}_s^{(2)}} \\
& \qquad \le \ L T^2 \max_{s \le T} \mnorm{ D^2 \tilde{q}_s^{(1)} -  D^2 \tilde{q}_s^{(2)} } + L_H (6/5) T^3 \max_{s \le T} \norm{\tilde{q}_s^{(1)} - \tilde{q}_s^{(2)}} \;. 
\end{align*}
Inserting $LT^2 \le 1/6$ and \eqref{apriori:3}, and simplifying yields \eqref{apriori:7}.  
\end{proof}

\subsection{Discretization error bounds for Verlet}

In this part we gather standard estimates of the error between the exact solution $q_T(x,v)$ and Verlet $\tilde q_T(x,v)$.   These estimates are a key ingredient in quantifying the invariant measure accuracy in TV distance of uHMC.

\medskip

To this end, the following error estimate for a variant of the trapezoidal rule is useful because \eqref{velVerlet} involves this particular trapezoidal rule approximation.

\begin{lem} \label{trapz}
Fix $T>0$.  Let $(\mathbf{V},\mnorm{\cdot})$ be a normed space.  Let $f: [0,T] \to \mathbf{V}$ be a twice differentiable function such that $\max_{s \in [0,T]} \mnorm{f'(s)} \le B_1$ and $\max_{s \in [0,T]} \mnorm{f''(s)} \le B_2$.  Then for any $h > 0$ such that $T/h \in \mathbb{N}$ 
\begin{equation} \label{eq:trapz_2}
\begin{aligned}
& \mnorm{ \int_0^T (T-s) f(s) ds - \frac{1}{2} \int_0^T (T-s) \left[ f(\lb{s}) + f(\ub{s}) \right] ds } \\
& \qquad \, \le \,  \frac{h^2}{12}  ( B_2  T^2 + B_1  T) \, .
\end{aligned}
\end{equation}
\end{lem}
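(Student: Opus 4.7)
The strategy is to split the integral into a sum over the subintervals $[t_k,t_{k+1}]$ with $t_k=kh$ and $N=T/h$, on each of which the difference between the true and approximate integrand reduces to $(T-s)[f(s)-\tfrac{1}{2}(f(t_k)+f(t_{k+1}))]$. The global error thus decomposes as $\sum_{k=0}^{N-1}e_k$, and all I need is a sharp local bound $\|e_k\|\lesssim h^3(\,\cdot\,)$ together with the right summation.

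The key device for the local estimate is the midpoint decomposition $T-s=(T-m_k)+(m_k-s)$, where $m_k:=(t_k+t_{k+1})/2$, which splits $e_k=A_k+B_k$. The term $A_k$ is the constant $(T-m_k)$ times the standard trapezoidal quadrature error of $f$ on $[t_k,t_{k+1}]$; using the Peano kernel form (which works in any normed space $\mathbf V$, bypassing the mean-value form)
\[
\int_{t_k}^{t_{k+1}}\!f(s)\,ds-\tfrac{h}{2}(f(t_k)+f(t_{k+1}))=\int_{t_k}^{t_{k+1}}\!\tfrac{(s-t_k)(s-t_{k+1})}{2}\,f''(s)\,ds,
\]
one obtains $\|A_k\|\le(T-m_k)\,\tfrac{h^3}{12}B_2$. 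For $B_k$, I would exploit the odd-symmetry identity $\int_{t_k}^{t_{k+1}}(m_k-s)\,ds=0$ to subtract any constant from the bracket; replacing $\tfrac{1}{2}(f(t_k)+f(t_{k+1}))$ by $f(m_k)$ gives $B_k=\int_{t_k}^{t_{k+1}}(m_k-s)[f(s)-f(m_k)]\,ds$, whence the mean-value bound $\|f(s)-f(m_k)\|\le B_1|s-m_k|$ yields $\|B_k\|\le B_1\,\tfrac{h^3}{12}$.

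Summing over $k$ is then straightforward: since the midpoint rule is exact on linear functions, $h\sum_k(T-m_k)=\int_0^T(T-s)\,ds=T^2/2$, so $\sum_k(T-m_k)=T^2/(2h)$, while $N=T/h$. This produces a total bound of $\tfrac{h^2T^2B_2}{24}+\tfrac{h^2TB_1}{12}$, which is comfortably dominated by the asserted $\tfrac{h^2}{12}(B_2T^2+B_1T)$.

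None of this is especially hard; the only slight obstacle is recognizing the midpoint decomposition that cleanly isolates the two different sources of error (the $f''$ contribution coming from the trapezoidal inaccuracy of $f$, and the $f'$ contribution coming from the interaction of $f$ with the varying weight $T-s$). Once that split is in place, the Peano-kernel estimate and the symmetry argument handle the vector-valued setting without any complication.
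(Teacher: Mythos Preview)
Your argument is correct and in fact yields the slightly sharper bound $\tfrac{h^2}{24}B_2T^2+\tfrac{h^2}{12}B_1T$, which implies the stated inequality. The route, however, differs from the paper's. The paper does not split $T-s$ around the midpoint; instead it integrates $\int_{t_k}^{t_{k+1}}(T-s)[f(s)-\tfrac12(f(t_k)+f(t_{k+1}))]\,ds$ by parts twice, choosing the antiderivatives so that the boundary terms collapse and a single residual term $-\tfrac{1}{12}h^3 f'(t_k)$ survives alongside an integral of $f''$ against a cubic kernel. Bounding those two pieces directly gives the per-interval estimate $\epsilon_k\le\tfrac{1}{12}(B_2T+B_1)h^3$, and summing over $T/h$ intervals yields the claim. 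Your midpoint decomposition buys a cleaner conceptual separation of the two error sources (trapezoidal inaccuracy of $f$ versus interaction with the weight $T-s$) and avoids computing explicit integration constants; the paper's double integration by parts is more mechanical but self-contained, requiring no appeal to the Peano kernel formula. Both approaches work without modification in a normed space.
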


\begin{proof}
The error over $[t_k, t_{k+1}]$ is given by
\[
\epsilon_k \ := \  \mnorm{ \int_{t_k}^{t_{k+1}} (T-s) f(s) ds - \frac{1}{2} \int_{t_k}^{t_{k+1}} (T-s) \left[ f(\lb{s}) + f(\ub{s}) \right] ds } \;.
\] By integrating by parts,  \[
\epsilon_k \ = \ \mnorm{ \int_{t_k}^{t_{k+1}} \left[ \frac{s^2}{2}  - T s - \alpha_k \right] f'(s) ds }
\] where $\alpha_k = (1/4) (t_{k+1}^2 - 2 T t_{k+1} - 2 T t_k + t_k^2)$.  A second integration by parts gives \begin{align*}
\epsilon_k =  \mnorm{ \int_{t_k}^{t_{k+1}} \left[ \frac{s^3}{6} - \frac{T s^2}{2}  - \alpha_k s + \beta_k \right] f''(s) ds  - \frac{1}{12} f'(t_{k}) h^3 } \le  \frac{1}{12} ( B_2 T + B_1 ) h^3 
\end{align*}
where $\beta_k = (1/12) (-6 T t_k + 3 t_k^2  + t_{k+1}^2) t_{k+1}$ and in the last step we used \[
\int_{t_k}^{t_{k+1}} \left| \frac{s^3}{6} - \frac{T s^2}{2}  - \alpha_k s + \beta_k \right| ds = \frac{1}{12} (T-t_k) h^3 
\le \frac{1}{12} \, T \, h^3  \;.  
\]
Summing these errors over the $T/h$ subintervals gives the upper bound in \eqref{eq:trapz_2}.  
\end{proof}

\medskip

\begin{lem} \label{lem:verlet_error:1}
Suppose Assumption~\ref{A1234} (A1)-(A3) hold. For any $x,v\in\R^d$,
\begin{equation} \label{verlet_error:1}
	\begin{aligned}
&\max_{s\leq T}\norm{ q_s(x,v) -  \tilde{q}_s(x,v)}   \\
& \qquad  \leq  h^2 \big(\frac{7}{45} L  \norm{x}   + \frac{1547}{1800} L T \norm{v}
 + \frac{1}{120} L_H \norm{x}^2 + \frac{3}{10} L_H T^2 \norm{v}^2  \big)  \;.
\end{aligned}
\end{equation}
\end{lem}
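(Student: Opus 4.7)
The plan is to write both $q_s(x,v)$ and $\tilde q_s(x,v)$ in integrated form from the equations of motion, subtract, decompose the difference into a trapezoidal-rule-error piece, a Lipschitz-propagation piece, and an asymmetric endpoint-correction piece, then apply Lemma~\ref{trapz}, the a priori bounds of Lemma~\ref{lem:apriori}, and a Gronwall-type absorption based on $L(T^2+Th)\le 1/6$.

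First I would integrate $\ddot q_t = -\nabla U(q_t)$ twice to obtain
\[
q_s(x,v) \; = \; x + sv - \int_0^s (s-u)\,\nabla U(q_u)\,du,
\]
and integrate \eqref{velVerlet} (using $\tilde v_{\lb u} = v - \tfrac12\int_0^{\lb u}[\nabla U(\tilde q_{\lb r})+\nabla U(\tilde q_{\ub r})]\,dr$ and swapping the order of integration) to obtain, for grid points $s=t_k$,
\[
\tilde q_s(x,v) \; = \; x + sv - \tfrac{1}{2}\!\int_0^s (s-u)\bigl[\nabla U(\tilde q_{\lb u})+\nabla U(\tilde q_{\ub u})\bigr]du + \tfrac{1}{2}\!\int_0^s (u-\lb u)\bigl[\nabla U(\tilde q_{\ub u})-\nabla U(\tilde q_{\lb u})\bigr]du.
\]
The maximum over all $s\le T$ can be reduced to grid points (off-grid $s$ contribute a smaller piecewise-quadratic remainder bounded directly). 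Subtracting splits $q_s-\tilde q_s$ into (I) the trapezoidal-rule error for $f(u):=\nabla U(q_u)$, (II) the Lipschitz difference $\tfrac{1}{2}\int(s-u)\bigl[(\nabla U(q_{\lb u})-\nabla U(\tilde q_{\lb u}))+(\nabla U(q_{\ub u})-\nabla U(\tilde q_{\ub u}))\bigr]du$, and (III) the endpoint correction $\tfrac12\int(u-\lb u)[\nabla U(\tilde q_{\ub u})-\nabla U(\tilde q_{\lb u})]\,du$.

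For piece (I) I apply Lemma~\ref{trapz}, where $f'(u)=\mathbf{H}(q_u)v_u$ yields $\mnorm{f'}\le L\norm{v_u}$, and $f''(u)=\differential\mathbf{H}(q_u)(v_u,v_u)-\mathbf{H}(q_u)\nabla U(q_u)$ yields $\mnorm{f''}\le L_H\norm{v_u}^2+L^2\norm{q_u}$ (using (A1)-(A3) with $\nabla U(0)=0$). I then substitute the a priori bounds from Lemma~\ref{lem:apriori} applied at $h=0$, giving $\max_s\norm{q_s}\le (7/6)\max(\norm{x},\norm{x+Tv})$ and $\max_s\norm{v_s}\le \norm{v}+LT(7/6)\max(\norm{x},\norm{x+Tv})$, and dominate $\max(\norm{x},\norm{x+Tv})\le \norm{x}+T\norm{v}$. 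Piece (III) is bounded by $\tfrac{L h T}{2}\max_u\norm{\tilde q_{\ub u}-\tilde q_{\lb u}}\le \tfrac{L h^2 T}{2}\max_u\norm{\tilde v_u}$ via the Verlet one-step update, giving another $h^2$ contribution controllable by Lemma~\ref{lem:apriori} in terms of $\norm{x}$ and $\norm{v}$. For piece (II), the Lipschitz bound $L$ produces an implicit $L(T^2+Th)\max_{s\le T}\norm{q_s-\tilde q_s}$ term, which by the hypothesis $L(T^2+Th)\le 1/6$ is at most $(1/6)\max_{s\le T}\norm{q_s-\tilde q_s}$; I move this to the left and absorb, paying a multiplicative factor $6/5$.

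The main obstacle will be the numerical bookkeeping. After substituting the a priori bounds, dominating $\norm{x+Tv}^2\le 2(\norm{x}^2+T^2\norm{v}^2)$, using $LT^2\le 1/6$ to re-express terms like $L^2T^2\norm{x}$ as $(LT^2)\cdot L\norm{x}$ and $L_HL^2T^2\norm{x}^2$ as $(LT^2)\cdot L_H L\norm{x}^2$, and multiplying through by the Gronwall amplification $6/5$, I must recover exactly the constants $7/45$, $1547/1800$, $1/120$, and $3/10$ in \eqref{verlet_error:1}. This is a purely algebraic but delicate term-by-term accounting, and optimal constants emerge only by carefully splitting the contributions of Lemma~\ref{trapz}'s $B_1T$ and $B_2T^2$ pieces together with piece (III) before applying the $6/5$ factor.
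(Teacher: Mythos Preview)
Your proposal is essentially the paper's proof: same three-way split into trapezoidal error, Lipschitz propagation, and endpoint correction, followed by Lemma~\ref{trapz}, Lemma~\ref{lem:apriori}, and the $6/5$ absorption.

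One small difference worth flagging: you keep the endpoint correction (your piece~(III)) in terms of $\tilde q$, whereas the paper first adds and subtracts the \emph{exact} flow so that the endpoint piece becomes $-\tfrac12\int_0^T(s-\lb s)[\nabla U(q_{\ub s})-\nabla U(q_{\lb s})]\,ds$ and the residual $\tilde q$-vs-$q$ difference in that integral is absorbed into the Lipschitz term~I. The advantage of the paper's grouping is that $q_{\ub s}-q_{\lb s}=\int_{\lb s}^{\ub s}v_r\,dr$ exactly, so the bound $\le h\max_r\|v_r\|$ is immediate. In your grouping, the step ``$\|\tilde q_{\ub u}-\tilde q_{\lb u}\|\le h\max_u\|\tilde v_u\|$'' is not literally correct, since for Verlet $\dot{\tilde q}_r=\tilde v_{\lb r}-(r-\lb r)\nabla U(\tilde q_{\lb r})$ is not the interpolated velocity $\tilde v_r$; you pick up an extra $hL\|\tilde q\|$ term. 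This is harmless (either absorb it using $Lh\le LT\le 1/(6T)$ and the a~priori bound on $\|\tilde q\|$, or simply switch to the paper's grouping), but as written your inequality for~(III) needs that small repair before the numerical constants can match.
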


\begin{proof}
By \eqref{apriori:1} and \eqref{apriori:2} with $h=0$, and since $LT^2 \le 1/6$, note that \begin{align}
    \label{max:q} \max_{s \le T} \norm{q_s} \ &\le \ (7/6) \, (\norm{x} + T \norm{v}) \;,  \\
    \label{max:v} \max_{s \le T} \norm{v_s} \ &\le \ \norm{v} + (7/6) \, L T  \, (\norm{x} + T \norm{v}) \ \le \ (7/6) \, L T \, \norm{x}  +  (6/5) \, \norm{v} \;,  \\
        \label{max:v2} \max_{s \le T} \norm{v_s}^2 \ &\le \ 2 (7/6)^2 L^2 T^2 \norm{x}^2 + 2 (6/5)^2 \norm{v}^2 \ \le \ (1/2) L \norm{x}^2 + 3 \norm{v}^2 \;.
\end{align}
Introduce the shorthand $q_T:= q_T(x,v)$ and $\tilde q_T := \tilde q_T(x,v)$.  Using \eqref{velVerlet}, note that
 \begin{align}
 q_T &  - \tilde q_T \ = \ \rn{1} + \rn{2} + \rn{3} \quad \text{where} \label{diff:q_tq} \\
\rn{1} \ &:= \ \frac{1}{2} \int_0^T (T-s) [ \nabla U(\tilde q_{\lb{s}} ) - \nabla U(q_{\lb{s}} ) + \nabla U(\tilde q_{\ub{s}}) - \nabla U(q_{\ub{s}}) ] ds  \nonumber \\
& \quad - \frac{1}{2} \int_0^T ( s - \lb{s}) [ \nabla U(\tilde q_{\ub{s}}) - \nabla U(q_{\ub{s}}) - (\nabla U(\tilde q_{\lb{s}}) - \nabla U(q_{\lb{s}}) ) ] ds \;, \nonumber \\
\rn{2} \ &:= \ -\frac{1}{2} \int_0^T ( s - \lb{s}) [ \nabla U(q_{\ub{s}}) - \nabla U(q_{\lb{s}}) ] ds \;,  \quad \text{and} \nonumber \\
\rn{3} \ &:= \ - \int_0^T (T-s) \nabla U(q_s) ds + \frac{1}{2} \int_0^T (T-s) [ \nabla U(q_{\lb{s}} ) + \nabla U(q_{\ub{s}}) ] ds   \;. \nonumber
\end{align} 
By Assumption~\ref{A1234} (A2) and the condition $LT^2 \le 1/6$,  \begin{align} \label{qerr:1}
    \norm{\rn{1}} \ &\le \ L T^2 \, \max_{s \le T} \norm{q_s - \tilde q_s} \ \le \ (1/6) \, \max_{s \le T} \norm{q_s - \tilde q_s} \;, \\
  \norm{\rn{2}} \ &\le \ (1/2) h L \int_0^T \norm{q_{\ub{s}}-q_{\lb{s}}} ds \ = \  (1/2) h L \int_0^T \norm{\int_{\lb{s}}^{\ub{s}} v_r dr} ds \;, \nonumber \\
\  & \le \  (1/2) L h^2 T \max_{s \le T} \norm{v_s} \ \le \  L h^2 \, ( (7/72) \, \norm{x} + (3/5) \, T \, \norm{v} ) \label{qerr:2} \;,
\end{align}
where for \eqref{qerr:2} we  used \eqref{max:v}. Next, we apply Lemma~\ref{trapz} with 
$f(s) = - \nabla U(q_s)$ and 
\begin{align*}
&\norm{f'(s)} = \norm{\mathbf{H}(q_s) v_s}  \le L \norm{v_s} \le L ( L T (7/6) |x| + (6/5) |v|) =: B_1 \;,    \\
&\norm{f''(s)} = \norm{-\differential^3 U(q_s)(v_s,v_s) + \mathbf{H}(q_s) \nabla U(q_s)} \;, \\
&\quad \le L_H \norm{v_s}^2 + L^2 \norm{q_s} \le L_H ( L (1/2) \norm{x}^2 + 3 \norm{v}^2 ) + L^2 (7/6) (\norm{x} + T \norm{v}) =: B_2   \;, 
\end{align*} where we used Assumption~\ref{A1234} (A2), (A3) and $L T^2 \le 1/6$. Thus, \begin{align}
  \norm{\rn{3}} \le \frac{h^2}{12} \left(  \frac{7}{18} L  \norm{x} +  \frac{251}{180} L T \norm{v} + \frac{1}{12} L_H \norm{x}^2 + 3 L_H T^2 \norm{v}^2 ) \right)  
    \label{qerr:3} \;.
\end{align}
Inserting \eqref{qerr:1}, \eqref{qerr:2} and \eqref{qerr:3}  into \eqref{diff:q_tq} and simplifying gives \eqref{verlet_error:1}.
\end{proof}

\begin{lem} \label{lem:verlet_error:2}
Suppose Assumption~\ref{A1234} (A1)-(A4) hold. For any $x,v\in\R^d$,
\begin{equation} \label{verlet_error:2}
	\begin{aligned}
&\max_{s\leq T} \mnorm{D_2 q_s(x,v)-D_2 \tilde{q}_s(x,v)}    \leq   h^2 \Big(  \frac{43}{50} L T  +  \frac{1183}{4500} L_H T \norm{x}    +  \frac{1847}{1250} L_H T^2 \norm{v} \\
&\qquad 
+
( \frac{3}{250}  L_H^2 T^3 + \frac{1}{100} L_I T ) \norm{x}^2  + ( \frac{54}{125}  L_H^2 T^5 + \frac{9}{25}  L_I T^3 )  \norm{v}^2 \Big) \;.  
\end{aligned}
\end{equation}
\end{lem}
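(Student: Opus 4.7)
My plan is to mirror the argument of Lemma~\ref{lem:verlet_error:1}, now applied to the matrix-valued function $D_2 q_s$ in place of $q_s$. The starting point is the integral representation for $D_2 \tilde{q}_T$ from \eqref{dervelVerlet}, combined with the analogous exact-flow identity $D_2 q_T = T I_d - \int_0^T (T-s)\,\mathbf{H}(q_s) D_2 q_s \, ds$. Subtracting, I split $D_2 q_T - D_2 \tilde{q}_T$ into four pieces analogous to $\rn{1},\rn{2},\rn{3}$ of Lemma~\ref{lem:verlet_error:1} but with $\nabla U(\cdot)$ replaced by the product $\mathbf{H}(\cdot)\,D_2(\cdot)$:
\textbf{(i)} a self-referencing piece $\tfrac{1}{2}\int_0^T(T-s)\mathbf{H}(\tilde q_{\lb{s}})(D_2\tilde q_{\lb{s}} - D_2 q_{\lb{s}}) ds$ (and the $\ub{s}$ counterpart, plus the small $s-\lb{s}$ correction), which is bounded by $LT^2 \max_s\|D_2 q_s - D_2\tilde q_s\|$ and absorbed via $LT^2\le 1/6$;
\textbf{(ii)} a Hessian-difference piece $\tfrac12\int_0^T(T-s)(\mathbf{H}(\tilde q_{\lb s})-\mathbf{H}(q_{\lb s}))D_2 q_{\lb s}\, ds$ (and its $\ub{s}$ version), which is bounded using (A3), $\|D_2 q_s\|\le (6/5)T$ from \eqref{apriori:5}, and Lemma~\ref{lem:verlet_error:1} for $\max_s\|q_s-\tilde q_s\|$;
\textbf{(iii)} a correction term $-\tfrac12\int_0^T(s-\lb s)[\mathbf{H}(q_{\ub s})D_2 q_{\ub s}-\mathbf{H}(q_{\lb s})D_2 q_{\lb s}]\,ds$, bounded using
$\|\tfrac{d}{ds}(\mathbf{H}(q_s)D_2 q_s)\|\le L_H\|v_s\|\|D_2 q_s\|+L\|D_2 v_s\|$
together with the a priori bounds \eqref{max:v}, \eqref{apriori:5}, \eqref{apriori:6}; and
\textbf{(iv)} a pure quadrature-error piece $\int_0^T(T-s)\mathbf{H}(q_s)D_2 q_s\,ds - \tfrac12\int_0^T(T-s)[\mathbf{H}(q_{\lb s})D_2 q_{\lb s}+\mathbf{H}(q_{\ub s})D_2 q_{\ub s}]\,ds$, which I handle with Lemma~\ref{trapz} applied to $f(s)=-\mathbf{H}(q_s)D_2 q_s$.

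For the trapezoid step (iv), I need explicit $B_1,B_2$. Using that $\tfrac{d}{ds}D_2 q_s=D_2 v_s$ and $\tfrac{d}{ds}D_2 v_s=-\mathbf{H}(q_s)D_2 q_s$, along with $v_s'=-\nabla U(q_s)$, one computes
\[
f'(s)=-\differential^3 U(q_s)(v_s,\cdot\,)D_2 q_s - \mathbf{H}(q_s)D_2 v_s,
\]
\[
f''(s)=-\differential^4 U(q_s)(v_s,v_s,\cdot\,)D_2 q_s + \differential^3 U(q_s)(\nabla U(q_s),\cdot\,)D_2 q_s - 2\differential^3 U(q_s)(v_s,\cdot\,)D_2 v_s + \mathbf{H}(q_s)^2 D_2 q_s.
\]
Then (A2)--(A4), together with \eqref{max:q}, \eqref{max:v}, \eqref{max:v2}, \eqref{apriori:5} and \eqref{apriori:6}, yield bounds of the schematic form
$\|f'(s)\|\lesssim L + L_H T\|v_s\|$ and
$\|f''(s)\|\lesssim L^2 + L_H\|v_s\| + L_I T\|v_s\|^2 + L_H T\|\nabla U(q_s)\|$,
which after substituting \eqref{max:v}--\eqref{max:v2} and $\|\nabla U(q_s)\|\le L\|q_s\|$ become polynomials of degree two in $(\|x\|,\|v\|)$ with coefficients depending on $L,L_H,L_I$ and powers of $T$.

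Inserting these bounds into \eqref{eq:trapz_2} produces an $h^2$-term matching the structure of \eqref{verlet_error:2}, while pieces (ii) and (iii) contribute $h^2$-terms involving $L\|x\|,LT\|v\|,L_H\|x\|^2,L_H T^2\|v\|^2$ via Lemma~\ref{lem:verlet_error:1}, and piece (i) is absorbed into $\max_s\|D_2 q_s-D_2\tilde q_s\|$ with an overall factor $(1-LT^2)^{-1}\le 6/5$. Combining and simplifying via $L(T^2+Th)\le 1/6$ yields~\eqref{verlet_error:2}.

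The main obstacle is bookkeeping: the second derivative $f''(s)$ generates numerous cross terms with powers of $L,L_H,L_I$ and different growth in $\|x\|,\|v\|$, and every one of them must be tracked and combined with the contributions from pieces (ii) and (iii) to produce the exact numerical coefficients $\tfrac{43}{50}$, $\tfrac{1183}{4500}$, $\tfrac{1847}{1250}$, etc. This is a routine but tedious accounting exercise; the only conceptual subtlety is ensuring that each appearance of $L\cdot(\text{something})\cdot T^2$ is collapsed using $LT^2\le 1/6$ in the same way as in the proof of Lemma~\ref{lem:verlet_error:1}, so that pieces (ii) and (iii) do not re-introduce higher powers of $L$ that would violate the stated form of the bound.
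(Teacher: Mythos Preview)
Your proposal is correct and follows essentially the same route as the paper: the paper decomposes $D_2 q_T - D_2\tilde q_T$ into the same four pieces $\rn{1}$--$\rn{4}$ (your (i)--(iv)), bounds $\rn{1}$ by $LT^2\max_s\mnorm{D_2 q_s-D_2\tilde q_s}$ and absorbs it with the factor $(1-1/6)^{-1}=6/5$, bounds $\rn{2}$ via (A3), \eqref{apriori:5} and Lemma~\ref{lem:verlet_error:1}, bounds $\rn{3}$ by the same $L_H\|v_s\|\|D_2 q_s\|+L\|D_2 v_s\|$ estimate you describe, and handles $\rn{4}$ with Lemma~\ref{trapz} using exactly your expressions for $f'(s)$ and $f''(s)$. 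The only remaining work is the arithmetic bookkeeping, as you note.
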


\begin{proof}
Using \eqref{velVerlet}, write the difference as
 \begin{align}
 D_2 q_T & (x,v)  - D_2 \tilde q_T(x,v) \ = \ \rn{1} + \rn{2} + \rn{3} + \rn{4} \quad \text{where} \label{diff:D2q_D2tq} \\
 \rn{1} \ &:= \ \frac{1}{2} \int_0^T (T-s)   \mathbf{H}(\tilde q_{\lb{s}} ) (D_2 \tilde q_{\lb{s}} - D_2 q_{\lb{s}})   ds \nonumber  \\
& \quad + \frac{1}{2} \int_0^T (T-s)  \mathbf{H}(\tilde q_{\ub{s}}) ( D_2 \tilde q_{\ub{s}} - D_2  q_{\ub{s}})  ds  \nonumber \\
& \quad - \frac{1}{2} \int_0^T ( s - \lb{s})  \mathbf{H}(\tilde q_{\ub{s}}) ( D_2 \tilde q_{\ub{s}} - D_2  q_{\ub{s}} ) ds \nonumber \\
& \quad + \frac{1}{2} \int_0^T ( s - \lb{s}) \mathbf{H}(\tilde q_{\lb{s}}) (D_2 \tilde q_{\lb{s}} - D_2  q_{\lb{s}})   ds \nonumber \\
\rn{2} \ &:= \ \frac{1}{2} \int_0^T (T-s)  ( \mathbf{H}(\tilde q_{\lb{s}} ) - \mathbf{H}(q_{\lb{s}} ) ) D_2  q_{\lb{s}}  ds \nonumber  \\
& \quad + \frac{1}{2} \int_0^T (T-s)  ( \mathbf{H}(\tilde q_{\ub{s}}) - \mathbf{H}(q_{\ub{s}}) ) D_2  q_{\ub{s}}  ds  \nonumber \\
& \quad - \frac{1}{2} \int_0^T ( s - \lb{s})  (\mathbf{H}(\tilde q_{\ub{s}}) - \mathbf{H}(q_{\ub{s}}) ) D_2  q_{\ub{s}} ds \nonumber \\
& \quad + \frac{1}{2} \int_0^T ( s - \lb{s}) (\mathbf{H}(\tilde q_{\lb{s}}) - \mathbf{H}(q_{\lb{s}}) )  D_2 q_{\lb{s}} ds \nonumber \\
\rn{3} \ &:= \ 
  -\frac{1}{2} \int_0^T ( s - \lb{s}) [ (\mathbf{H}(q_{\ub{s}}) - \mathbf{H}(q_{\lb{s}})) D_2 q_{\ub{s}}] \nonumber \\
 & \quad - \frac{1}{2} \int_0^T ( s - \lb{s}) [ \mathbf{H}(q_{\lb{s}}) (D_2 q_{\ub{s}} - D_2 q_{\lb{s}} ) ] ds 
  \;,  \quad \text{and} \nonumber \\
\rn{4} \ &:= \ - \int_0^T (T-s) \mathbf{H}(q_s) D_2 q_s ds \nonumber \\ 
& \quad + \frac{1}{2} \int_0^T (T-s) [ \mathbf{H}(q_{\lb{s}} ) D_2 q_{\lb{s}} + \mathbf{H}(q_{\ub{s}}) D_2 q_{\ub{s}} ] ds   \;. \nonumber
\end{align} 
By Assumptions~\ref{A1234} (A2) and (A3), and the condition $LT^2 \le 1/6$,  \begin{align} \label{D2qerr:1}
    \mnorm{\rn{1}} \ &\le \ L T^2 \, \max_{s \le T} \mnorm{D_2 q_s - D_2 \tilde q_s} \ \le \ (1/6) \, \max_{s \le T}\mnorm{D_2 q_s - D_2 \tilde q_s} \;, \\
  \mnorm{\rn{2}} \ &\le \   L_H T^2 \max_{s \le T} \norm{q_s - \tilde q_s} \, \max_{s \le T} \mnorm{D_2 q_s} \ \le \ (6/5) L_H T^3 \, \max_{s \le T} \norm{q_s - \tilde q_s}  \label{D2qerr:2} \;, \\
  \mnorm{\rn{3}} 
  \ &\le \   \frac{h L_H}{2}  \max_{s \le T} \mnorm{D_2 q_s}  \int_0^T  \norm{ \int_{\lb{s}}^{\ub{s}} v_r dr} ds  + \frac{h L}{2}  \int_0^T \mnorm{ \int_{\lb{s}}^{\ub{s}} D_2 v_r dr } ds \;, \nonumber \\
  \ &\le \ \frac{h^2 L_H T^2}{2} \frac{6}{5}  \max_{s \le T} \norm{v_s} +  \frac{h^2 L T}{2} \max_{s \le T}  \mnorm{D_2 v_s} \;, \nonumber \\
  \ &\le \ \frac{3}{5} h^2 L T + h^2 L_H ( \frac{7}{60} T \norm{x} + 
  \frac{18}{25} T^2 \norm{v} )     \label{D2qerr:3} \;,
\end{align}
where for \eqref{D2qerr:2} we used \eqref{apriori:5} and for \eqref{D2qerr:3} we used \eqref{max:v} and \eqref{apriori:6}.  Next, we apply Lemma~\ref{trapz} with 
$f(s) = -\mathbf{H}(q_s) D_2 q_s$ and 
\begin{align*}
&\mnorm{f'(s)} \ = \ \mnorm{(\differential^3 U(q_s) v_s) D_2 q_s + \mathbf{H}(q_s) D_2 v_s }  \ \le \ L_H \norm{v_s} \mnorm{D_2 q_s} + L \mnorm{D_2 v_s}  \;,    \\
&\quad  \ \le \ \frac{6}{5} ( L_H  ( \frac{7}{36} \norm{x} +\frac{6}{5} T \norm{v} )   + L ) \ =: \ B_1 \;, \\
&\mnorm{f''(s)} \ = \ \mnorm{(\differential^4 U(v_s,v_s) - \differential^3 U \nabla U - \mathbf{H}^2) D_2 q_s +
2 (\differential^3 U v_s) D_2 v_s
} \;, \\
&\quad \ \le \ (L_I \norm{v_s}^2 + L_H L \norm{q_s} + L^2) \mnorm{D_2 q_s} + 2 L_H \norm{v_s} \mnorm{D_2 v_s} \;, \\
&\quad \ \le \ [L_I ( \frac{L}{2} \norm{x}^2 + 3 \norm{v}^2 ) + L_H L \frac{7}{6} (\norm{x} + T \norm{v}) + L^2 ] \frac{6}{5} T +  L_H  [\frac{7}{3} L T \norm{x} + \frac{12}{5} \norm{v}] =: B_2   \;,
\end{align*}
where we used $L T^2 \le 1/6$,
Assumption~\ref{A1234},
\eqref{max:v}, \eqref{max:v2}, \eqref{apriori:5} and \eqref{apriori:6}.  Thus,
  \begin{align}
  \mnorm{\rn{4}} \le \frac{h^2}{12} \left( \frac{7}{5} L T + \frac{77}{90} L_H T \norm{x} + \frac{611}{150}  L_H T^2  \norm{v}  +  \frac{1}{10} L_I T \norm{x}^2   + \frac{18}{5}  L_I T^3  \norm{v}^2   \right)  
    \label{D2qerr:4} .
\end{align}
Inserting \eqref{D2qerr:1}, \eqref{D2qerr:2}, \eqref{D2qerr:3}, 
\eqref{D2qerr:4} and
Lemma~\ref{lem:verlet_error:1}
into \eqref{diff:D2q_D2tq} and simplifying gives \eqref{verlet_error:2}.
\end{proof}

\section{One-shot Coupling Bounds}
\label{sec:one_shot_bounds}

Here we prove bounds related to the one-shot coupling.  Throughout this section, we fix a duration parameter $T>0$ and a time step size $h>0$ such that
$T/h\in\mathbb N$ and 
\begin{equation}
    L(T^2 + T h) \ \le\ 1/6\, .
\end{equation}

\subsection{One-shot for $\tilde q_T(x,\xi) = \tilde q_T(y, \Phi(\xi))$}

By Corollary~\ref{cor:MuOr2004}, for any $x,y,v \in \mathbb{R}^d$, there exists a unique minimum $(\tilde{q}_t^{\star})_{0 \le t \le T}$ of the action sum satisfying the endpoint conditions $\tilde{q}_0^{\star} = y$ and $\tilde{q}_T^{\star} = \tilde{q}_T(x, v)$.  In terms of this minimum, we introduce a function $\Phi: \mathbb{R}^{d}  \to \mathbb{R}^d$ defined as $\Phi(v) \ := \ - D_1 L_h(\tilde{q}_0^{\star}, \tilde{q}_{h}^{\star})$, which by definition satisfies $\tilde q_T(x, v) = \tilde q_T^{\star}(y, \Phi(v))$. Here we develop some bounds for $\Phi(v)$ and $D \Phi(v)$.  

\begin{lem}\label{lem:oneshot:1:a}
Suppose that Assumption~\ref{A1234} (A1)-(A2) hold.  Then for any $x,y,v \in \mathbb{R}^d$ such that $\tilde q_T(x,v) = \tilde q_T(y,\Phi(v))$, we have \begin{equation} \label{Phiv}
T \norm{\Phi(v)-v} \ \le \ (3/2) \ \norm{x - y} \;.
\end{equation}
\end{lem}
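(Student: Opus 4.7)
The plan is to prove this inequality via a mean value theorem argument that inverts the velocity-to-endpoint map $u \mapsto \tilde q_T(y,u)$, using the a priori bounds of Section~5 together with the assumption $L(T^2+Th)\le 1/6$. The defining relation $\tilde q_T(x,v)=\tilde q_T(y,\Phi(v))$ lets me compare the trajectory from $(x,v)$ to a synchronously coupled trajectory from $(y,v)$ and then translate the endpoint mismatch into an initial-velocity mismatch.

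First, I apply \eqref{apriori:3} with $u=v$ (i.e., synchronous coupling of the initial velocities) to obtain
\[
\|\tilde q_T(x,v)-\tilde q_T(y,v)\|\ \le\ (1+L(T^2+Th))\,\|x-y\|\ \le\ \tfrac{7}{6}\|x-y\|.
\]
Second, since $\tilde q_T(x,v)=\tilde q_T(y,\Phi(v))$, the mean value theorem applied to $u\mapsto\tilde q_T(y,u)$ gives
\[
\tilde q_T(y,\Phi(v))-\tilde q_T(y,v)\ =\ M\,(\Phi(v)-v),\qquad M:=\int_0^1 D_2\tilde q_T\bigl(y,v+s(\Phi(v)-v)\bigr)\,ds.
\]
So what remains is a lower bound on $\|Mw\|$ uniform in $w$.

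The main technical step is to show $\|D_2\tilde q_T(y,u)-TI_d\|\le T/10$ for every $u\in\mathbb R^d$. Starting from the integral representation \eqref{dervelVerlet} and bounding $\|\mathbf H\|\le L$ via (A2) together with $\max_s\|D_2\tilde q_s\|\le (6/5)T$ from \eqref{apriori:5}, the two integral terms are controlled respectively by $\int_0^T(T-s)\,ds=T^2/2$ and $\int_0^T(s-\lfloor s\rfloor_h)\,ds=Th/2$, yielding
\[
\|D_2\tilde q_T-TI_d\|\ \le\ \tfrac{6}{5}LT\cdot\tfrac{T^2+Th}{2}\ =\ \tfrac{3}{5}LT(T^2+Th)\ \le\ \tfrac{3}{5}T\cdot\tfrac{1}{6}\ =\ \tfrac{T}{10},
\]
where the last inequality uses $L(T^2+Th)\le 1/6$. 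Averaging, $\|M-TI_d\|\le T/10$, hence $\|Mw\|\ge (9T/10)\|w\|$ for all $w$.

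Combining the three steps, $(9T/10)\|\Phi(v)-v\|\le (7/6)\|x-y\|$, i.e.\
\[
T\|\Phi(v)-v\|\ \le\ \tfrac{35}{27}\|x-y\|\ \le\ \tfrac{3}{2}\|x-y\|,
\]
which is the claim. The main obstacle is the uniform invertibility estimate for $D_2\tilde q_T$: one has to be careful to use both the pointwise bound \eqref{apriori:5} at the full constant $(6/5)T$ and the precise split of the integrals in \eqref{dervelVerlet} (including the correction involving $s-\lfloor s\rfloor_h$ that contributes the extra $Th/2$ factor) in order for the combined constraint $L(T^2+Th)\le 1/6$ to deliver a bound small enough to preserve invertibility with a comfortable margin.
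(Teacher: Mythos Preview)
Your proof is correct and takes a genuinely different route from the paper's. The paper directly subtracts the integral representations of $\tilde q_T(x,v)$ and $\tilde q_T(y,\Phi(v))$ and applies the a~priori bound \eqref{apriori:3} in its full generality (different initial positions \emph{and} different initial velocities), obtaining an inequality of the form $T\lvert\Phi(v)-v\rvert\le \lvert x-y\rvert+(7/36)(\lvert x-y\rvert+T\lvert\Phi(v)-v\rvert)$, which it then solves self-consistently. You instead split the problem in two: first bound the endpoint mismatch $\|\tilde q_T(x,v)-\tilde q_T(y,v)\|$ via \eqref{apriori:3} with synchronous velocities, then invert the velocity-to-endpoint map $u\mapsto\tilde q_T(y,u)$ through a mean value argument and a uniform bound on $\|D_2\tilde q_T-TI_d\|$ derived from \eqref{dervelVerlet} and \eqref{apriori:5}. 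Your decomposition is a bit more modular and even yields a sharper constant ($35/27$ versus the paper's $43/29$), while the paper's argument is marginally more self-contained since it does not need to invoke the separate velocity-derivative estimate \eqref{apriori:5}. One minor formal point: \eqref{apriori:5} is stated in Lemma~\ref{lem:apriori2} under (A1)--(A3), but the proof of that particular inequality only uses (A2), so your argument does work under just (A1)--(A2) as the present lemma assumes.
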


\begin{proof}
Let $u=\Phi(v)$.  Integrating \eqref{velVerlet} yields \begin{align*}
   \tilde{q}_T(x,u) & = x + T u - \frac{1}{2} \int_0^T (T - s) \left[ \nabla U(\tilde{q}_{\lb{s}}) + \nabla U(\tilde{q}_{\ub{s}}) \right] ds  \\
    & + \frac{1}{2} \int_0^T (s - \lb{s}) \left[ \nabla U(\tilde{q}_{\ub{s}}) - \nabla U(\tilde{q}_{\lb{s}}) \right] ds 
\end{align*}
and since $\tilde{q}_T(x,u) =\tilde{q}_T(y,v)$, we obtain \begin{align*}
    T \norm{u - v} &\le \norm{x - y}   +  L T^2 \max_{s \le T} \norm{\tilde{q}_s(x,u)-\tilde{q}_s(y,v)} \\
    &\le \norm{x - y}   +  L T^2  (1+ L (T^2+T h))\max\left(\norm{x-y},\norm{(x-y)+T(u-v)}\right) \\
    &\le \norm{x - y}   +  (7/36) (\norm{x-y}+T\norm{u-v} ) \;,
\end{align*}
where in the second to last step we used \eqref{apriori:3} from Lemma~\ref{lem:apriori} and in the last step we used $L (T^2 + T h) \le 1/6$.  Simplifying and using $(1+7/36)/(1-7/36) < 3/2$ gives  \eqref{Phiv}.
\end{proof}

\begin{lem}\label{lem:oneshot:1:b}
Suppose that Assumption~\ref{A1234} (A1)-(A3) hold. Then for any $x,y,v \in \mathbb{R}^d$ such that $\tilde q_T(x,v) = \tilde q_T(y,\Phi(v))$, we have \begin{equation} \label{DPhiv}
\mnorm{D \Phi(v)- I_d} \ \le \ (1/2) \min(1, 11 \, L_H  T^2 \, \norm{x - y}) \;.   
\end{equation}
\end{lem}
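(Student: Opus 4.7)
The plan is to start from the defining identity $\tilde q_T(x,v) = \tilde q_T(y,\Phi(v))$ and differentiate in $v$. The chain rule gives $D_2\tilde q_T(x,v) = D_2\tilde q_T(y,\Phi(v))\,D\Phi(v)$, so, provided the matrix $A := D_2\tilde q_T(y,\Phi(v))$ is invertible, we obtain the explicit representation $D\Phi(v) = A^{-1}B$ with $B := D_2\tilde q_T(x,v)$, and hence the key identity
\[
D\Phi(v) - I_d \;=\; A^{-1}(B-A).
\]
The task then splits into two independent bounds on $\|B-A\|$ combined with one uniform bound on $\|A^{-1}\|$.

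To control $\|A^{-1}\|$, I would revisit the integral representation \eqref{dervelVerlet} and write $A = T I_d + Z_y$ and $B = T I_d + Z_x$, where the remainders $Z_x, Z_y$ are given by the two integrals in \eqref{dervelVerlet}. A direct estimate of these integrals using Assumption~\ref{A1234}(A2), the bound $\max_s \|D_2\tilde q_s\|\le (6/5)T$ from \eqref{apriori:5}, and the step-size constraint $L(T^2+Th)\le 1/6$ (which in particular implies $LT^2 + 2LTh \le 1/3$) yields $\|Z_x\|, \|Z_y\| \le T/5$. Hence $A = T(I_d + Z_y/T)$ with $\|Z_y/T\|\le 1/5$, and a Neumann series argument gives $\|A^{-1}\| \le 5/(4T)$.

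For the first bound $\|D\Phi(v) - I_d\|\le 1/2$, I combine $\|A^{-1}\|\le 5/(4T)$ with the crude estimate $\|B-A\| = \|Z_x - Z_y\| \le \|Z_x\| + \|Z_y\| \le 2T/5$, giving
\[
\|D\Phi(v) - I_d\| \;\le\; \frac{5}{4T}\cdot\frac{2T}{5} \;=\; \frac{1}{2}.
\]
For the second bound $(11/2)L_H T^2\|x-y\|$, I apply \eqref{apriori:7} from Lemma~\ref{lem:apriori2} to compare the two integral representations directly. Together with Lemma~\ref{lem:oneshot:1:a}, which ensures $T\|\Phi(v)-v\|\le (3/2)\|x-y\|$ and hence $\max(\|x-y\|, \|x-y+T(v-\Phi(v))\|)\le (5/2)\|x-y\|$, and with $1+L(T^2+Th)\le 7/6$, this yields
\[
\|B-A\| \;\le\; L_H \bigl(\tfrac{6}{5}\bigr)^{\!2} T^3 \cdot \tfrac{7}{6}\cdot \tfrac{5}{2}\,\|x-y\| \;=\; \tfrac{21}{5}\, L_H T^3 \|x-y\|.
\]
Multiplying by $\|A^{-1}\|\le 5/(4T)$ gives $\|D\Phi(v)-I_d\|\le (21/4)L_H T^2\|x-y\| \le (11/2)L_H T^2\|x-y\|$, as required.

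The main obstacle is pinning down sharp enough constants: the uniform $1/2$ bound relies on the refinement $\|Z\|\le T/5$ (a strict improvement over what one gets by plugging the coarser bound $\|D_2\tilde q_s\|\le (6/5)T$ back into the integral), and this refinement exploits the step-size constraint \eqref{eq:Tconstraint} more carefully than in the proof of Lemma~\ref{lem:apriori2}. Once that numerical point is in place, both halves of the $\min$ follow from routine combinations of Lemmas~\ref{lem:apriori2} and \ref{lem:oneshot:1:a}.
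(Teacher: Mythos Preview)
Your proof is correct and follows essentially the same route as the paper: both differentiate the defining identity to obtain $A(D\Phi-I)=B-A$ with $A=D_2\tilde q_T(y,\Phi(v))$ and $B=D_2\tilde q_T(x,v)$, and then combine control of $A^{-1}$ (you via a Neumann series giving $\|A^{-1}\|\le 5/(4T)$, the paper by moving the $Z_y(D\Phi-I)$ term to the left-hand side to get the factor $(1-(6/5)LT^2)^{-1}=5/4$) with the a~priori estimate \eqref{apriori:7} and Lemma~\ref{lem:oneshot:1:a}, arriving at the identical constant $21/4<11/2$. Your closing remark about the ``refinement'' $\|Z\|\le T/5$ is slightly muddled---plugging $\|D_2\tilde q_s\|\le(6/5)T$ back into the integral representation \eqref{dervelVerlet} already yields $\|Z\|\le (6/5)LT^3\le T/5$ directly---but this is purely expository and does not affect the argument.
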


\begin{proof}
 Introduce the shorthand $\tilde{q}_s^{(1)} : = \tilde{q}_s(x,v)$ and $\tilde{q}_s^{(2)} : = \tilde{q}_s(y,\Phi(v))$ for any $s \in [0,T]$.
Differentiating both sides of $\tilde{q}_T(x,v) = \tilde{q}_T(y,\Phi(v))$ with respect to $v$ yields \begin{equation} \label{eq:DPhimId}
\begin{aligned}
&T (D \Phi(v) - I_d) = \\
& + \frac{1}{2} \int_0^T (T - s) \left[ \mathbf{H}(\tilde{q}_{\lb{s}}^{(2)})  D_2 \tilde{q}_{\lb{s}}^{(2)}   - 
 \mathbf{H}(\tilde{q}_{\lb{s}}^{(1)})  D_2 \tilde{q}_{\lb{s}}^{(1)} \right] ds  \\
& + \frac{1}{2} \int_0^T (T - s) \left[ \mathbf{H}(\tilde{q}_{\ub{s}}^{(2)})  D_2 \tilde{q}_{\ub{s}}^{(2)}  - 
 \mathbf{H}(\tilde{q}_{\ub{s}}^{(1)})   D_2 \tilde{q}_{\ub{s}}^{(1)}
 \right] ds  \\
    & - \frac{1}{2} \int_0^T (s - \lb{s}) \left[  \mathbf{H}(\tilde{q}_{\ub{s}}^{(2)})  D_2 \tilde{q}_{\ub{s}}^{(2)}    -   \mathbf{H}(\tilde{q}_{\ub{s}}^{(1)})   D_2 \tilde{q}_{\ub{s}}^{(1)} \right] ds \\
        & - \frac{1}{2} \int_0^T (s - \lb{s}) \left[ \mathbf{H}(\tilde{q}_{\lb{s}}^{(1)})  D_2 \tilde{q}_{\lb{s}}^{(1)}  -  \mathbf{H}(\tilde{q}_{\lb{s}}^{(2)}) D_2 \tilde{q}_{\lb{s}}^{(2)}    \right] ds \\
& + \frac{1}{2} \int_0^T (T-s) \left[\mathbf{H}(\tilde{q}_{\lb{s}}^{(2)}) D_2 \tilde{q}_{\lb{s}}^{(2)} + \mathbf{H}(\tilde{q}_{\ub{s}}^{(2)}) D_2 \tilde{q}_{\ub{s}}^{(2)} \right] (D \Phi(v) - I_d) ds  \\
 & - \frac{1}{2} \int_0^T (s-\lb{s}) \left[\mathbf{H}(\tilde{q}_{\ub{s}}^{(2)}) D_2 \tilde{q}_{\ub{s}}^{(2)} - \mathbf{H}(\tilde{q}_{\lb{s}}^{(2)}) D_2 \tilde{q}_{\lb{s}}^{(2)} \right] (D \Phi(v) - I_d) ds \;.
 \end{aligned}
 \end{equation}
 By using Assumption~\ref{A1234} (A2),  \eqref{apriori:5},  and $L T^2 \le 1/6$, note that \begin{equation} \label{DPhiv:1}
   \mnorm{D \Phi(v) - I_d} \le (6/5) 2 L T^2 / (1- (6/5) L T^2) \le 1/2 \;.
\end{equation}   We can also rewrite \eqref{eq:DPhimId} as 
 \begin{align*}
 &T (D \Phi(v) - I_d) = \\
&+\frac{1}{2} \int_0^T (T - s) \left[ \mathbf{H}(\tilde{q}_{\lb{s}}^{(2)}) ( D_2 \tilde{q}_{\lb{s}}^{(2)} - D_2 \tilde{q}_{\lb{s}}^{(1)} )   - 
( \mathbf{H}(\tilde{q}_{\lb{s}}^{(1)}) -  \mathbf{H}(\tilde{q}_{\lb{s}}^{(2)})) D_2 \tilde{q}_{\lb{s}}^{(1)} \right] ds  \\
& + \frac{1}{2} \int_0^T (T - s) \left[ \mathbf{H}(\tilde{q}_{\ub{s}}^{(2)}) ( D_2 \tilde{q}_{\ub{s}}^{(2)} - D_2 \tilde{q}_{\ub{s}}^{(1)}) - 
( \mathbf{H}(\tilde{q}_{\ub{s}}^{(1)}) - \mathbf{H}(\tilde{q}_{\ub{s}}^{(2)}) )  D_2 \tilde{q}_{\ub{s}}^{(1)}
 \right] ds  \\
    & - \frac{1}{2} \int_0^T (s - \lb{s}) \left[  \mathbf{H}(\tilde{q}_{\ub{s}}^{(2)}) ( D_2 \tilde{q}_{\ub{s}}^{(2)}  -D_2 \tilde{q}_{\ub{s}}^{(1)})  -  ( \mathbf{H}(\tilde{q}_{\ub{s}}^{(1)}) - \mathbf{H}(\tilde{q}_{\ub{s}}^{(2)}) )  D_2 \tilde{q}_{\ub{s}}^{(1)} \right] ds \\
        & + \frac{1}{2} \int_0^T (s - \lb{s}) \left[ \mathbf{H}(\tilde{q}_{\lb{s}}^{(2)}) ( D_2 \tilde{q}_{\lb{s}}^{(2)} - D_2 \tilde{q}_{\lb{s}}^{(1)}) - ( \mathbf{H}(\tilde{q}_{\lb{s}}^{(1)}) - \mathbf{H}(\tilde{q}_{\lb{s}}^{(2)}) ) D_2 \tilde{q}_{\lb{s}}^{(1)}    \right] ds \\
& + \frac{1}{2} \int_0^T (T-s) \left[\mathbf{H}(\tilde{q}_{\lb{s}}^{(2)}) D_2 \tilde{q}_{\lb{s}}^{(2)} + \mathbf{H}(\tilde{q}_{\ub{s}}^{(2)}) D_2 \tilde{q}_{\ub{s}}^{(2)} \right] (D \Phi(v) - I_d) ds  \\
 & - \frac{1}{2} \int_0^T (s-\lb{s}) \left[\mathbf{H}(\tilde{q}_{\ub{s}}^{(2)}) D_2 \tilde{q}_{\ub{s}}^{(2)} - \mathbf{H}(\tilde{q}_{\lb{s}}^{(2)}) D_2 \tilde{q}_{\lb{s}}^{(2)} \right] (D \Phi(v) - I_d) ds \;.
\end{align*}
By using Assumption~\ref{A1234} (A2),  \eqref{apriori:5}, Assumption~\ref{A1234} (A3), \eqref{apriori:7}, and $L T^2 \le 1/6$, we get \begin{align*}
    (4/5) \, T \,  \mnorm{D \Phi(v) - I_d}   
    & \le 
     ( 1 - L T^2 (6/5) ) \, T \, \mnorm{D \Phi(v) - I_d}  \\
    & \le   L T^2 \max_{s \le T} \mnorm{ D^2 \tilde{q}_s^{(1)} -  D^2 \tilde{q}_s^{(2)} } + L_H (6/5) T^3 \max_{s \le T} \norm{\tilde{q}_s^{(1)} - \tilde{q}_s^{(2)}} \\
    & \le  (42/25) \, L_H T^3 \, ( \norm{x-y} + T \norm{\Phi(v) - v} )  \;.
\end{align*} Inserting \eqref{Phiv}, using $21/4 < 11/2$, simplifying and inserting \eqref{DPhiv:1} gives \eqref{DPhiv}.  
\end{proof}

\subsection{One-shot for $\tilde q_T(x,\xi) = q_T(x, \Phi(\xi))$}


By Corollary~\ref{cor:MuOr2004}, for any $x,v \in \mathbb{R}^d$, there exists a unique minimum $(q_t^{\star})_{0 \le t \le T}$ of the action integral satisfying the endpoint conditions $q_0^{\star} = x$ and $q_T^{\star} = \tilde{q}_T(x, v)$.  In terms of this minimum, we introduce a function $\Phi: \mathbb{R}^{d}  \to \mathbb{R}^d$ defined as $\Phi(v) \ := \ v_0^{\star}$, which by definition satisfies $\tilde q_T(x, v) =  q_T^{\star}(x, \Phi(v))$. Here we develop some bounds for $\Phi(v)$ and $D \Phi(v)$.

\begin{lem}\label{lem:oneshot:2:a}
Suppose that Assumption~\ref{A1234} (A1)-(A3) hold. Then for any $x,v \in \mathbb{R}^d$ such that $\tilde q_T(x,v) = q_T(x,\Phi(v))$, we have \begin{align} 
& T \norm{\Phi(v)-v}  \ \le \ \frac{7}{6}  h^2 \big(\frac{7}{45} L  \norm{x}   + \frac{1547}{1800} L T \norm{v}
 + \frac{1}{120} L_H \norm{x}^2 + \frac{3}{10} L_H T^2 \norm{v}^2  \big) \;.   \label{Phiv2}
\end{align}
\end{lem}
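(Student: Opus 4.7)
The plan is to exploit the variation-of-constants formula for the exact flow and compare with the same formula applied to the Verlet endpoint, treating $\|\tilde q_T(x,v)-q_T(x,v)\|$ (the Verlet consistency error already controlled by Lemma~\ref{lem:verlet_error:1}) as the driving term. Specifically, integrating \eqref{eq:Hamflow} twice shows that for any $u\in\mathbb R^d$,
\begin{equation*}
q_T(x,u)\ =\ x + Tu - \int_0^T (T-s)\,\nabla U(q_s(x,u))\, ds.
\end{equation*}
Applying this identity at $u=\Phi(v)$ and at $u=v$, subtracting, and using the defining identity $q_T(x,\Phi(v))=\tilde q_T(x,v)$ gives
\begin{equation*}
T(\Phi(v)-v)\ =\ [\tilde q_T(x,v)-q_T(x,v)]\ +\ \int_0^T (T-s)\bigl[\nabla U(q_s(x,\Phi(v)))-\nabla U(q_s(x,v))\bigr]\, ds.
\end{equation*}

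Next, I would take norms and use Assumption~\ref{A1234}(A2) together with \eqref{apriori:3} applied with $y=x$ and $h=0$ to obtain $\max_{s\le T}\|q_s(x,\Phi(v))-q_s(x,v)\|\le (1+LT^2)\,T\|\Phi(v)-v\|$. Bounding the integrand by $L$ times this quantity and integrating yields
\begin{equation*}
T\|\Phi(v)-v\|\ \le\ \|\tilde q_T(x,v)-q_T(x,v)\|\ +\ \tfrac{1}{2}\,L T^3(1+LT^2)\,\|\Phi(v)-v\|.
\end{equation*}
Under the standing hypothesis $LT^2\le 1/6$, the prefactor $\tfrac12 LT^2(1+LT^2)$ is at most $7/72$, and rearranging gives $T\|\Phi(v)-v\|\le (72/65)\,\|\tilde q_T(x,v)-q_T(x,v)\|\le (7/6)\,\|\tilde q_T(x,v)-q_T(x,v)\|$, since $72/65<7/6$.

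To conclude, I would insert the explicit discretization bound \eqref{verlet_error:1} from Lemma~\ref{lem:verlet_error:1} for $\|q_T(x,v)-\tilde q_T(x,v)\|$, which reproduces exactly the quantity in parentheses in \eqref{Phiv2} and delivers the advertised $7/6$ prefactor. The main subtlety is the absorption step: one has to exploit both $LT^2\le 1/6$ and the stability inequality \eqref{apriori:3} carefully enough to keep the overall prefactor at $7/6$ rather than a looser constant like $36/29$ obtained from a cruder estimate on the difference of exact flows; once this bookkeeping is done, the result follows directly from the already-proved Verlet error bound.
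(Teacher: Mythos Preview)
Your proof is correct and follows essentially the same route as the paper: subtract the variation-of-constants identities for $q_T(x,\Phi(v))$ and $q_T(x,v)$, use the defining relation $q_T(x,\Phi(v))=\tilde q_T(x,v)$, bound the integral term via (A2) together with \eqref{apriori:3} at $h=0$, and absorb the resulting $\tfrac12 LT^2(1+LT^2)\le 7/72$ contribution before inserting Lemma~\ref{lem:verlet_error:1}. The only cosmetic difference is that you make the intermediate constant $72/65<7/6$ explicit, whereas the paper writes $7/6$ directly.
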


\begin{proof}
Introduce the shorthand $\tilde q_T = \tilde q_T(x,v)$, $q_T^{(1)} = q_T(x,v)$ and $q_T^{(2)} = q_T(x,\Phi(v))$.  Since  $\tilde q_T = q_T^{(2)}$  we have $q_T^{(2)}  - q_T^{(1)} = \tilde q_T - q_T^{(1)}$ which implies that \begin{align}
 T \, \norm{ \Phi(v)  - v }  \ &= \ \norm{ \int_0^T ( T - s) [ \nabla U(q_{s}^{(2)}) - \nabla U(q_{s}^{(1)}) ] ds + \tilde{q}_T - q_T^{(1)} } \nonumber \\
 & \le \
 \frac{L T^2}{2} \max_{s \le T}  \norm{q_s^{(2)} - q_s^{(1)} }
 + \norm{\tilde{q}_T - q_T^{(1)} } \;.
 \label{diff:Phiv_v}
\end{align}  
Moreover, by \eqref{apriori:3} with $h=0$, we obtain $\max_{s \le T}  \norm{q_s^{(2)} - q_s^{(1)} } \le (7/6) T \norm{\Phi(v) - v}$.  Inserting this latter inequality into \eqref{diff:Phiv_v} and simplifying gives \[
T \, \norm{ \Phi(v)  - v } \ \le \ (7/6) \max_{s \le T} \norm{ \tilde  q_s - q_s^{(1)} }   \ = \   (7/6) \max_{s \le T} \norm{ \tilde  q_s(x,v) - q_s(x,v) } \;. 
\] Inserting \eqref{verlet_error:1} gives the required result.  
\end{proof}

\begin{lem}\label{lem:oneshot:2:b}
Suppose that Assumption~\ref{A1234} (A1)-(A4) hold.  Then for any $x,v \in \mathbb{R}^d$ such that $\tilde q_T(x,v) = q_T(x,\Phi(v))$, we have \begin{equation} \label{DPhiv2}
\begin{aligned} 
&  \mnorm{D \Phi(v)- I_d} \ \le \   \min\Bigg(\frac{1}{2}, h^2 \Big(  \frac{43}{45} L  + \frac{1946}{6075}   L_H   \norm{x}   + \frac{873707}{486000}  L_H T \norm{v} \\
&\qquad +  ( \frac{121}{5400}  L_H^2 T^2 + \frac{1}{90} L_I  ) \norm{x}^2  
 + ( \frac{121}{150}  L_H^2 T^4 + \frac{2}{5}  L_I T^2 )  \norm{v}^2 \Big) \Bigg) \;.
\end{aligned} 
\end{equation}
\end{lem}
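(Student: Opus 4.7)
The plan is to mirror the strategy used to prove Lemma~\ref{lem:oneshot:1:b}, with a discretization-error term replacing the spatial-separation term. First, differentiate the defining identity $\tilde q_T(x,v) = q_T(x,\Phi(v))$ with respect to $v$ to obtain
\begin{equation*}
D_2 \tilde q_T(x,v) \ = \ D_2 q_T(x,\Phi(v))\, D\Phi(v),
\end{equation*}
which rearranges to $D_2 q_T(x,\Phi(v))\,(D\Phi(v) - I_d) = D_2 \tilde q_T(x,v) - D_2 q_T(x,\Phi(v))$. Then expand both derivative flows using the integral representation \eqref{dervelVerlet} (for Verlet) and the analogous variational equation $D_2 q_T(x,u) = T I_d - \int_0^T (T-s)\mathbf{H}(q_s) D_2 q_s\, ds$ for the exact flow, and subtract. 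The goal is to write the result in the schematic form $T(D\Phi(v) - I_d) = \mathcal{R}_{\mathrm{absorb}} + \mathcal{R}_{\mathrm{disc}} + \mathcal{R}_{\mathrm{stab}}$, where $\mathcal{R}_{\mathrm{absorb}}$ collects terms multiplying $(D\Phi(v)-I_d)$ that can be moved to the left-hand side using $LT^2\le 1/6$, exactly as in the transition from \eqref{eq:DPhimId} to the concluding inequality of Lemma~\ref{lem:oneshot:1:b}.

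Next, the term $\mathcal{R}_{\mathrm{disc}}$ consists of the discretization error of the derivative flow at the common initial data $(x,v)$; after adding and subtracting $D_2 q_T(x,v)$ in the right place, this is controlled directly by Lemma~\ref{lem:verlet_error:2}. The term $\mathcal{R}_{\mathrm{stab}}$ measures the stability of the exact derivative flow under the velocity perturbation from $v$ to $\Phi(v)$; by the $h=0$, $x=y$ specialization of \eqref{apriori:7}, it is bounded by $L_H (6/5)^2 T^3(1+LT^2)\cdot T\norm{v-\Phi(v)}$, and Lemma~\ref{lem:oneshot:2:a} then converts $T\norm{v-\Phi(v)}$ into $O(h^2)$ times the polynomial in $\norm{x},\norm{v}$ appearing in \eqref{Phiv2}. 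Combining these two contributions and absorbing the $\mathcal{R}_{\mathrm{absorb}}$ coefficient (which is at most $(6/5)LT^2 \le 1/5$ by \eqref{apriori:5} and the step-size condition) yields the explicit $h^2$ bound in \eqref{DPhiv2}.

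Separately, the a priori bound $\mnorm{D\Phi(v) - I_d}\le 1/2$ follows by the same crude estimate as in \eqref{DPhiv:1}: bounding the right-hand side using only $\mnorm{\mathbf{H}}\le L$ (Assumption (A2)), $\mnorm{D_2 q_s}, \mnorm{D_2\tilde q_s}\le (6/5)T$ (Lemma~\ref{lem:apriori2}), and $LT^2\le 1/6$, one obtains $\mnorm{D\Phi(v)-I_d}\le (6/5)\cdot 2LT^2/(1-(6/5)LT^2)\le 1/2$. Taking the minimum of this a priori bound and the quantitative $h^2$ bound produces \eqref{DPhiv2}.

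The main obstacle will be bookkeeping rather than any new idea: the explicit constants in \eqref{DPhiv2} must be chased through three nested integral estimates (Lemma~\ref{lem:verlet_error:2}, the stability step, and the absorption step), and the $\norm{x}^2$, $\norm{v}^2$ polynomial terms in \eqref{verlet_error:2} and \eqref{Phiv2} must be recombined carefully. Once the decomposition $\mathcal{R}_{\mathrm{absorb}}+\mathcal{R}_{\mathrm{disc}}+\mathcal{R}_{\mathrm{stab}}$ is in place, the estimate reduces to algebra analogous to that in Lemmas~\ref{lem:oneshot:1:b} and \ref{lem:oneshot:2:a}.
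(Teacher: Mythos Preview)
Your proposal is correct and follows essentially the same route as the paper: differentiate the defining identity, split off the absorption term, and control the remainder by Lemma~\ref{lem:verlet_error:2} (discretization) plus \eqref{apriori:7} combined with Lemma~\ref{lem:oneshot:2:a} (stability). The paper's only deviation is cosmetic: rather than invoking \eqref{apriori:7} directly for $\mathcal{R}_{\mathrm{stab}}=D_2q_T(x,v)-D_2q_T(x,\Phi(v))$, it re-expands the integral and bounds the $[\mathbf{H}(q_s^{(2)})-\mathbf{H}(q_s^{(1)})]D_2q_s^{(1)}$ and $\mathbf{H}(q_s^{(2)})[D_2q_s^{(2)}-D_2q_s^{(1)}]$ pieces separately before absorbing, which sharpens the stability coefficient (to $\tfrac{14}{15}L_HT^3$ rather than the roughly doubled value your direct application of \eqref{apriori:7} would give) just enough to reproduce the exact constants in \eqref{DPhiv2}.
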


\begin{proof}
Introduce the shorthand $\tilde q_T = \tilde q_T(x,v)$, $q_T^{(1)} = q_T(x,v)$ and $q_T^{(2)} = q_T(x,\Phi(v))$. The derivative of $q_T^{(2)}  - q_T^{(1)} = \tilde q_T - q_T^{(1)}$ with respect to $v$ yields \begin{align}
 & T \, ( D \Phi(v)  - I_d)  \ = \  
 D_2 \tilde q_T - D_2 q_T^{(1)} 
 + \int_0^T ( T - s) [ \mathbf{H}(q_{s}^{(2)}) - \mathbf{H} (q_{s}^{(1)}) ] D_2 q_{s}^{(1)} ds \nonumber \\ 
 & \qquad + \int_0^T ( T - s) \mathbf{H}(q_{s}^{(2)}) [ D_2 q_s^{(2)} - D_2 q_s^{(1)} + D_2 q_s^{(2)} ( D \Phi(v) - I_d)]  ds \;. \nonumber 
\end{align}  
By Assumption~\ref{A1234} (A2)-(A3) and \eqref{apriori:5}, \begin{align}
     & T \, \mnorm{ D \Phi(v)  - I_d } \ \le\ \mnorm{D_2 \tilde q_T - D_2 q_T^{(1)} } +
     \frac{L_H T^2}{2}  \frac{6 T}{5}  \max_{s \le T} \norm{ q_s^{(2)} - q_s^{(1)}} \nonumber \\
     &\qquad + \frac{L T^2}{2} \max_{s \le T} \mnorm{ D_2 q_s^{(2)} - D_2 q_s^{(1)} } + \frac{L T^2}{2}  \frac{6 T}{5} \mnorm{ D \Phi(v)  - I_d } \;. \nonumber
\end{align} Since $L T^2 \le 1/6$, and inserting  \eqref{apriori:3} and \eqref{apriori:7} with $h=0$,  we obtain  
 \begin{align}
     & T \, \mnorm{ D \Phi(v)  - I_d }  
      \le \frac{10}{9} \mnorm{D_2 \tilde q_T - D_2 q_T^{(1)} } + L_H T^3 \frac{14}{15} \left( T \norm{\Phi(v) - v} \right)  \;. \nonumber
\end{align}
Inserting \eqref{Phiv2} and \eqref{verlet_error:2}, simplifying, and combining with a bound similar to \eqref{DPhiv:1} (and therefore omitted), gives \eqref{DPhiv2}. 
\end{proof}

\section{Proofs of Results for Mean-Field $U$}

In this section, we provide the remaining ingredients needed to prove Theorem~\ref{thm:tv_to_equilibrium_mf} and Theorem~\ref{thm:tv_IM_accuracy_mf} from Section~\ref{ex:mean_field}.

\subsection{Preliminaries}

Consider the mean-field model with potential energy function given in \eqref{mf}.  Throughout this section, we assume  that $V,W$ are functions in $C^4(\mathbb{R}^k)$ with   \begin{align*}
L &=\sup\mnorm{\differential^2V},~~
L_H=\sup\mnorm{\differential^3V},~~ 
~~L_I=\sup\mnorm{\differential^4V}, \\
\tilde{L} &=\sup\mnorm{\differential^2W},~~ \tilde{L}_H=\sup\mnorm{\differential^3W},
~~\tilde{L}_I=\sup\mnorm{\differential^4 W},
\end{align*}
which we assume are all bounded.  This assumption is not needed in every statement given in this section, but for simplicity, we assume it throughout.   Moreover,  we assume that $T>0$ and the time step size $h>0$ are such that
$T/h\in\mathbb N$ and
\begin{equation}
    \label{eq::Tconstraint_mf}
    (L + 4 \epsilon \tilde{L}) (T^2 + T h) \ \le\ 1/6\, .
\end{equation}
As we discuss next, the constant $(L + 4 \epsilon \tilde{L})$ represents an effective Lipschitz constant for the gradient of $U$.

Define $\mathbf{H}_{ij}(x) = \nabla_{ij} U(x) = \partial^2 U(x) / \partial x^i \partial x^j$ for $i,j \in \{1, \dots, n \}$.
From \eqref{mf}, note that for all $x, y \in \mathbb{R}^{n k}$ and $i,j \in \{1, \dots, n \}$,  \begin{align}
\norm{\nabla_i U(x)}  \, &\le \,  (L+2 \epsilon \tilde{L}) \norm{x^i} + \frac{2 \epsilon \tilde{L}}{n} \sum_{\ell \ne i} \norm{x^{\ell}} ,  \\
\label{eq:dG_i_mf}
\norm{\nabla_i U(x) - \nabla_i U(y)}  \, &\le \,  (L + 2 \epsilon \tilde{L}) \norm{x^i - y^i} + \frac{2 \epsilon \tilde{L} }{n} \sum_{\ell \ne i}  \norm{x^{\ell} - y^{\ell}}  ,  \\
\label{eq:L_mf}
\mnorm{\mathbf{H}_{ij}(x)} \ &\le \ \begin{cases}
L + 2 \epsilon \tilde{L}  & \text{if $i=j$},  \\
2 \epsilon \tilde L / n & \text{else},
\end{cases} \\
\label{eq:dH_ij_mf}
\mnorm{\mathbf{H}_{ij}(x) - \mathbf{H}_{ij}(y)} \, &\le \,  \begin{cases}
( L_H + 2 \epsilon \tilde{L}_H ) \norm{x^i - y^i} + \frac{2 \epsilon \tilde{L}_H}{n} \sum_{\ell \ne i}  \norm{x^{\ell} - y^{\ell}} & \text{if $i=j$},  \\
\frac{2 \epsilon \tilde L_H}{n} ( \norm{x^i - y^i} + \norm{x^j - y^j})   & \text{else},
\end{cases} 
\end{align}
where we used $\sum_{\ell \ne i} = \sum_{\ell=1, \ell \ne i}^n$.   Additionally, for any $i_1, i_2, i_3, i_4 \in \{ 1, \dots, n \}$, \begin{align}
\label{eq:LH_mf}
    \mnorm{\frac{\partial^3 U(x)}{\partial x^{i_1} \partial x^{i_2} \partial x^{i_3}}}  \, &\le \,  \begin{cases} L_H + 2 \epsilon \tilde{L}_H 
    & \text{if all indices are equal},
    \\
    2 \epsilon \tilde{L}_H/n & \text{if exactly two indices are equal}, \\
    0 & \text{else},
    \end{cases}  \\
\label{eq:LI_mf}
    \mnorm{\frac{\partial^4 U(x)}{\partial x^{i_1} \partial x^{i_2} \partial x^{i_3} \partial x^{i_4}}}  \, &\le \,  \begin{cases} L_I + 2 \epsilon \tilde{L}_I & \text{if all indices are equal}, \\
    2 \epsilon \tilde{L}_I/n & \text{if exactly three indices are equal}, \\
    0 & \text{else}.
    \end{cases} 
\end{align}

As in Section~\ref{sec:one_shot_bounds} for general $U$,  since we assume \eqref{eq::Tconstraint_mf}, we can invoke Corollary~\ref{cor:MuOr2004},  to obtain the existence/uniqueness of a function $\Phi: \mathbb{R}^{n k}
 \to \mathbb{R}^{nk}$ that satisfies either
$\tilde{q}_T(x,v) = \tilde{q}_T(y,\Phi(v))$ for any $x,y,v \in \mathbb{R}^{n k}$ or $\tilde{q}_T(x,v) = q_T(x,\Phi(v))$ for any $x,v \in \mathbb{R}^{n k}$.

\subsection{TV bounds and regularization by one-shot couplings}

Here we prove that the transition kernel of uHMC for mean-field $U$ has a regularizing effect with a better dimension dependence than for general $U$.

\begin{lem} \label{lem:overlap1_mf}
 For any $x,y,v \in \mathbb{R}^{n k}$, let $\Phi: \mathbb{R}^{n k} \to \mathbb{R}^{n k}$ be the map defined by $\tilde{q}_T(x,v) = \tilde{q}_T(y,\Phi(v))$.  Then 
\begin{equation} \label{tvbound1_mf}
\begin{aligned}
& \TV{\delta_x \tilde \pi}{\delta_y \tilde \pi}  \ \le \ 
\TV{\law(\xi)}{\law(\Phi(\xi))}  \\
& \qquad \ \le \ \frac{3}{2} \, \left( T^{-2}  + 34  k (L_H+8 \epsilon \tilde{L}_H)^2  T^4 \right)^{1/2} \, \sum_{\ell =1}^n \norm{x^{\ell} - y^{\ell} } .
\end{aligned}
\end{equation}
\end{lem}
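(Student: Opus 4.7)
The proof plan follows the same template as Lemma~\ref{lem:overlap1} for general $U$, but carefully exploits the mean-field structure of the Hessian and third derivative of $U$ (see \eqref{eq:L_mf}--\eqref{eq:LH_mf}) to replace the total dimension $d=nk$ by the per-particle dimension $k$ in the second term.

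First I would establish the first inequality exactly as in Lemma~\ref{lem:overlap1}: since $\tilde{q}_T:\mathbb R^{2nk}\to\mathbb R^{nk}$ is deterministic and measurable, the data-processing inequality for TV yields
$\TV{\delta_x\tilde\pi}{\delta_y\tilde\pi}=\TV{\law(\tilde q_T(y,\xi))}{\law(\tilde q_T(y,\Phi(\xi)))}\le\TV{\law(\xi)}{\law(\Phi(\xi))}.$
Then I would invoke Lemma~\ref{lem:overlap} to reduce the remaining estimate to bounding the two quantities $E\|\Phi(\xi)-\xi\|^2$ and $E\,\|D\Phi(\xi)-I_{nk}\|_F^2$, after first verifying the hypothesis $\mnorm{D\Phi(v)-I_{nk}}\le 1/2$ via the analog of \eqref{DPhiv:1} using the effective Lipschitz constant $L+4\epsilon\tilde L$ coming from \eqref{eq:L_mf} and the constraint \eqref{eq::Tconstraint_mf}.

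Next I would prove a mean-field analog of Lemma~\ref{lem:oneshot:1:a}, namely
$T\,\|\Phi(v)-v\|\le \tfrac{3}{2}\sum_{\ell=1}^n\|x^\ell-y^\ell\|,$
by repeating the integration-from-\eqref{velVerlet} computation, bounding $\nabla U(\tilde q^{(1)})-\nabla U(\tilde q^{(2)})$ via the componentwise Lipschitz estimate \eqref{eq:dG_i_mf}, summing over components, and applying the mean-field variant of \eqref{apriori:3} (the bound of \cite{BoEbZi2020} stated in $\ell_1$) together with \eqref{eq::Tconstraint_mf} to absorb the self-term. The key point here is that the coefficient on $\sum_\ell \|x^\ell-y^\ell\|$ remains dimension-free.

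The main obstacle — and where the improved dimension dependence comes from — is the analog of Lemma~\ref{lem:oneshot:1:b}, where I need
$\mnorm{D\Phi(v)-I_{nk}}_F^2\le C\,k\,(L_H+8\epsilon\tilde L_H)^2\,T^4\,\Big(\sum_{\ell}\|x^\ell-y^\ell\|\Big)^2.$
Differentiating the identity $\tilde q_T(x,v)=\tilde q_T(y,\Phi(v))$ as in \eqref{eq:DPhimId} produces an equation of the form $T(D\Phi-I_{nk})=\mathcal A+\mathcal B(D\Phi-I_{nk})$ where $\mathcal A$ involves differences $\mathbf H_{ij}(\tilde q^{(1)})-\mathbf H_{ij}(\tilde q^{(2)})$ and $D_2\tilde q^{(1)}-D_2\tilde q^{(2)}$. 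Here I would estimate $\|\mathcal A\|_F^2$ block-by-block using \eqref{eq:dH_ij_mf}: for the diagonal blocks this gives a factor $(L_H+2\epsilon\tilde L_H)^2\|x^i-y^i\|^2+\bigl(\tfrac{2\epsilon\tilde L_H}{n}\sum_{\ell\ne i}\|x^\ell-y^\ell\|\bigr)^2$, and for off-diagonal blocks a factor of order $n^{-2}$. Because each matrix block lives in $\mathbb R^{k\times k}$, the Frobenius norm of a block is at most $\sqrt{k}$ times the operator norm, and summing the $n^2$ blocks one checks that the cross-terms telescope so that only $k$ (not $nk$) appears in the final bound, while the mean-field combinatorics produce the constant $(L_H+8\epsilon\tilde L_H)^2$. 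After this, a mean-field analog of \eqref{apriori:7} — obtained by redoing Lemma~\ref{lem:apriori2} componentwise with \eqref{eq:L_mf}--\eqref{eq:LH_mf} — controls $D_2\tilde q^{(1)}-D_2\tilde q^{(2)}$ in terms of $\sum_\ell\|x^\ell-y^\ell\|$, and the previous step \eqref{Phiv2}-analog closes the recursion.

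Finally, combining the two bounds through Lemma~\ref{lem:overlap} and using $(a+b)^{1/2}\le a^{1/2}+b^{1/2}$ within the square root (or simply a single $\sqrt{\cdot}$ with the two additive contributions) yields the factor $\bigl(T^{-2}+34k(L_H+8\epsilon\tilde L_H)^2 T^4\bigr)^{1/2}$; I would tune the numerical constant $34$ the same way $27$ was obtained in Lemma~\ref{lem:overlap1}, namely by bookkeeping $(3/2)^2=9/4$ against the Pinsker constants. The combinatorial accounting of the Frobenius-norm sum over the $n\times n$ grid of $k\times k$ blocks is the step most likely to hide an error, so I would carry it out explicitly.
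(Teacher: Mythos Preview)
Your proposal is correct and follows essentially the same approach as the paper: establish the data-processing inequality, apply Lemma~\ref{lem:overlap}, and feed in mean-field versions of Lemmas~\ref{lem:oneshot:1:a} and~\ref{lem:oneshot:1:b} (the paper packages these as Lemmas~\ref{lem:oneshot:1:a_mf} and~\ref{lem:oneshot:1:b_mf}, relying on the mean-field a~priori bounds in Lemma~\ref{lem:apriori2_mf}). The one minor technical difference is in the Frobenius estimate: rather than bounding $\|D\Phi-I\|_F^2=\sum_{i,\ell}\|\text{block}_{i\ell}\|_F^2\le k\sum_{i,\ell}\|\text{block}_{i\ell}\|^2$ and working with squared norms as you outline, the paper uses the cruder but linear bound $\|D\Phi-I\|_F\le\sqrt{k}\sum_{i,\ell}\|\text{block}_{i\ell}\|$ and then controls the double sum $\sum_{i,\ell}\mnorm{\partial_{v^\ell}\Phi^i-\delta_{i\ell}I_k}$ directly, which makes the recursion $T(D\Phi-I)=\mathcal A+\mathcal B(D\Phi-I)$ close with a single scalar absorption step and avoids the block-by-block squared-norm accounting you flag as delicate.
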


Note that the prefactor in the upper bound of \eqref{tvbound1_mf} does not depend on the number $n$ of particles.  In this sense \eqref{tvbound1_mf} is an improvement over \eqref{tvbound1}.  

\begin{proof}
This proof is similar to the proof of Lemma~\ref{lem:overlap1}, except here we directly insert the results in Lemma~\ref{lem:oneshot:1:a_mf} and~\ref{lem:oneshot:1:b_mf} into Lemma~\ref{lem:overlap} to obtain \begin{align*}
& \TV{\law(\xi)}{\law(\Phi(\xi)}^2  \ \le \
\left( \frac{9}{4} \, T^{-2}  + \frac{2401}{32} \, k \, (L_H+8 \epsilon \tilde{L}_H)^2 \, T^4 \right)  \, \left( \sum_{\ell =1}^n \norm{x^{\ell} - y^{\ell} } \right)^2 .
\end{align*}
 Taking square roots and inserting $ 34> (4/9)(2401/32)$ gives \eqref{tvbound1}.
\end{proof}

Analogous to Lemmas~\ref{lem:overlap1} and~\ref{lem:regularization}, Lemma~\ref{lem:overlap1_mf} similarly implies that the transition kernel of uHMC has a regularizing effect in the following sense.

\begin{lem} \label{lem:regularization_mf}
 For any  $\nu,\eta \in \mathcal{P}(\mathbb{R}^{nk})$,
\begin{equation} \label{eq:regularization_mf}
\TV{\eta \tilde \pi}{\nu \tilde \pi} \ \le \ 
 \frac{3}{2} \, \left( T^{-2}  + 34  k (L_H+8 \epsilon \tilde{L}_H)^2  T^4 \right)^{1/2} \W_{\ell_1}^1(\eta, \nu)  \;.
\end{equation}
\end{lem}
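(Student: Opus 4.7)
The plan is to mimic the proof of Lemma~\ref{lem:regularization} verbatim, except that the pointwise regularization bound from Lemma~\ref{lem:overlap1} is replaced by its mean-field analog Lemma~\ref{lem:overlap1_mf}, and the Euclidean norm is replaced by the $\ell_1$-norm $\ell_1(x,y)=\sum_{\ell=1}^n\|x^\ell-y^\ell\|$.

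More concretely, I will let $\omega\in\J(\eta,\nu)$ be an arbitrary coupling and take $(X,Y)$ with $\law(X,Y)=\omega$. Applying the coupling characterization \eqref{eq:tv_coupling} of the TV distance to the joint law $\law(\tilde X_1(X),\tilde Y_1(Y))$ built on $\omega$ together with any pointwise coupling of $\tilde\pi(X,\cdot)$ and $\tilde\pi(Y,\cdot)$ yields
\[
\TV{\eta\tilde\pi}{\nu\tilde\pi}\ \le\ E\bigl[\TV{\delta_X\tilde\pi}{\delta_Y\tilde\pi}\bigr].
\]
Then I will invoke the pointwise bound from Lemma~\ref{lem:overlap1_mf}, which gives
\[
\TV{\delta_X\tilde\pi}{\delta_Y\tilde\pi}\ \le\ \tfrac{3}{2}\bigl(T^{-2}+34\,k\,(L_H+8\epsilon\tilde L_H)^2\,T^4\bigr)^{1/2}\,\ell_1(X,Y),
\]
and take expectations, producing $E[\ell_1(X,Y)]$ on the right-hand side.

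Finally, since $\omega\in\J(\eta,\nu)$ was arbitrary, I will take the infimum over $\omega$; by definition of $\W_{\ell_1}^1$, this infimum is exactly $\W_{\ell_1}^1(\eta,\nu)$, which yields \eqref{eq:regularization_mf}. There is no genuine obstacle here—the work has already been done in Lemma~\ref{lem:overlap1_mf}, which is where the dimension-improved prefactor $34k(L_H+8\epsilon\tilde L_H)^2$ and, crucially, the $\ell_1$-structure of the right-hand side originate; the present lemma is simply the lift from a pointwise estimate to a measure-level estimate via a standard coupling argument.
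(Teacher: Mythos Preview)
Your proposal is correct and matches the paper's approach exactly: the paper omits the proof, stating only that it is ``very similar to the proof of Lemma~\ref{lem:regularization} except that it involves the $\W_{\ell_1}^1$ distance rather than the $\W^1$ distance,'' which is precisely what you do. The pointwise input is Lemma~\ref{lem:overlap1_mf} in place of Lemma~\ref{lem:overlap1}, and the coupling argument and infimum step are identical.
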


The proof of Lemma~\ref{lem:regularization_mf} is very similar to the proof of Lemma~\ref{lem:regularization} except that it involves the $\W_{\ell_1}^1$ distance rather than the $\W^1$ distance, and therefore, omitted.

\begin{lem} \label{lem:overlap2_mf}
For any $x,v \in \mathbb{R}^{n k}$, let $\Phi: \mathbb{R}^{n k} \to \mathbb{R}^{n k}$ be the map defined by $\tilde{q}_T(x,v) = q_T(x,\Phi(v))$.  Then
\begin{equation} \label{tvbound2_mf}
\begin{aligned}
& \TV{\delta_x \pi}{\delta_x \tilde \pi} \ \le \ \TV{\law(\xi)}{\law(\Phi(\xi))} \\ 
& \quad\le \
h^2 \, \Big[ 
 n^2 k \Big(  17 (L+4 \epsilon \tilde{L})^2 +
 28 (L_H +8 \epsilon \tilde{L}_H)^2 T^2
 + 
 104 k (L_H +8 \epsilon \tilde{L}_H)^2 T^2 
 \\  &  \quad +
 180 (2 k +k^2) \big( (L_I + 14 \epsilon \tilde{L}_I)^2 T^2 + (L_H +8 \epsilon \tilde{L}_H)^2  T^4 \big)^2  \Big)  \\ 
 & \quad  + n \big( 10 k  (L_H +8 \epsilon \tilde{L}_H)^2 +  7 (L+4 \epsilon \tilde{L})^2 T^{-2} \big) \sum_{\ell=1}^n \norm{x^{\ell}}^2 \\
 & \quad +  n \Big(40 k \big( (L_I + 14 \epsilon \tilde{L}_I) + (L_H +8 \epsilon \tilde{L}_H)^2  T^2 \big)^2 \\ 
 & \quad + 7 (L_H +8 \epsilon \tilde{L}_H)^2 T^{-2}
 \Big) \sum_{\ell=1}^n \norm{x^{\ell}}^4  \Big]^{1/2} .
\end{aligned}
\end{equation}
\end{lem}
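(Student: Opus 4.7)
The plan is to follow the proof of Lemma~\ref{lem:overlap2} line by line, substituting mean-field analogs at each step. First, since $q_T:\mathbb R^{2nk}\to\mathbb R^{nk}$ is deterministic and measurable \cite[Lemma 3]{madras2010quantitative}, one obtains the first inequality $\TV{\delta_x \pi}{\delta_x \tilde \pi}\le \TV{\law(\xi)}{\law(\Phi(\xi))}$. Applying Lemma~\ref{lem:overlap} to $\xi\sim\mathcal N(0,I_{nk})$ then bounds the right-hand side by the square root of $E[\norm{\Phi(\xi)-\xi}^2 + 2\,\mnorm{D\Phi(\xi)-I_{nk}}_F^2]$, once the hypothesis $\mnorm{D\Phi(v)-I_{nk}}\le 1/2$ has been verified using~\eqref{eq::Tconstraint_mf} and the mean-field analog of Lemma~\ref{lem:oneshot:2:b}.

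The two pointwise ingredients required are mean-field versions of Lemmas~\ref{lem:oneshot:2:a} and~\ref{lem:oneshot:2:b}, controlling $\norm{\Phi(v)-v}^2$ and the Frobenius norm $\mnorm{D\Phi(v)-I_{nk}}_F^2$ via the block structure of~\eqref{eq:L_mf}--\eqref{eq:LI_mf}. In each single-particle-style estimate, the constants $L,L_H,L_I$ of $V$ are to be replaced by the effective constants $L+4\epsilon\tilde L$, $L_H+8\epsilon\tilde L_H$, $L_I+14\epsilon\tilde L_I$ that absorb the diagonal-block contributions of $W$, while the off-diagonal interaction blocks retain their $1/n$ prefactor. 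The key novelty, relative to Lemma~\ref{lem:overlap2}, is to replace the crude bound $\mnorm{A}_F^2\le nk\,\mnorm{A}^2$ by the block decomposition $\mnorm{A}_F^2\le k\sum_{i,j=1}^n\mnorm{A_{ij}}^2$, where $A:=D\Phi(v)-I_{nk}$ is viewed as $n\times n$ blocks $A_{ij}\in\mathbb R^{k\times k}$. Differentiating $\tilde q_T(x,v)=q_T(x,\Phi(v))$ in $v$ produces a fixed-point identity for $A$ whose diagonal part is driven by block-diagonal Hessians of order $L+\epsilon\tilde L$ and whose off-diagonal part inherits the $1/n$ decay from~\eqref{eq:L_mf}. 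Iterating as in the proof of Lemma~\ref{lem:oneshot:2:b} and summing over blocks, one obtains a bound on $\mnorm{A}_F^2$ that scales linearly in $n$ rather than quadratically, with the $1/n$ off-diagonal decay producing only one factor of $n$ instead of $n^2$.

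Finally, taking expectations with $\xi^\ell$ i.i.d.\ $\mathcal N(0,I_k)$, using $E\norm{\xi^\ell}^2=k$ and $E\norm{\xi^\ell}^4=k(k+2)$, and combining the two bounds by Cauchy--Schwarz exactly as in the proof of Lemma~\ref{lem:overlap2} yields~\eqref{tvbound2_mf}. I expect the main obstacle to be the block-by-block bookkeeping in the mean-field analog of Lemma~\ref{lem:oneshot:2:b}: one must verify that the $1/n$ decay of the off-diagonal Hessian blocks genuinely survives the iterative solution of the equation defining $A$, so that the Frobenius norm of $A$ really does scale linearly rather than quadratically in $n$. Once this gain is secured, matching the various $n^2k$, $n^2k^2$, and $n$-weighted moment prefactors in~\eqref{tvbound2_mf} against the effective Lipschitz polynomials is a routine if lengthy algebraic calculation.
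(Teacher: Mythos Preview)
Your proposal is correct and follows essentially the same route as the paper: the paper likewise obtains the first inequality via the deterministic-map argument, applies Lemma~\ref{lem:overlap}, and then inserts the mean-field one-shot bounds (stated there as Lemmas~\ref{lem:oneshot:2:a_mf} and~\ref{lem:oneshot:2:b_mf}) before taking Gaussian expectations and simplifying. The only minor difference is in the block Frobenius step: the paper uses the slightly cruder bound $\mnorm{A}_F \le \sqrt{k}\sum_{i,\ell}\mnorm{A_{i\ell}}$ and controls the double sum $\sum_{i,\ell}\mnorm{\partial_{v^\ell}\Phi^i(v)-\delta_{i\ell}I_k}$ directly (via Lemma~\ref{lem:verlet_error:2_mf}), rather than your $\mnorm{A}_F^2\le k\sum_{i,\ell}\mnorm{A_{i\ell}}^2$; both exploit the same $1/n$ off-diagonal decay and yield the same $n$-scaling.
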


\begin{proof}
The proof is similar to the proof of Lemma~\ref{lem:overlap2} except here we directly insert the results of Lemmas \ref{lem:oneshot:2:a_mf} and~\ref{lem:oneshot:2:b_mf} into Lemma~\ref{lem:overlap}, and then use the Cauchy-Schwarz inequality, to obtain \begin{align*}
& \TV{\law(\xi)}{\law(\Phi(\xi)}^2
\ \le \ E\left[  \norm{\Phi(\xi)-\xi}^2 + 2 \mnorm{D \Phi(\xi) - I_d}_F^2 \right] \\
& \le \  h^4 \, E\Big[ 
10 n^2 k  (L+4 \epsilon \tilde{L})^2 + 
  n \big( 10 k  (L_H +8 \epsilon \tilde{L}_H)^2 +  7 (L+4 \epsilon \tilde{L})^2 T^{-2} \big) \sum_{\ell=1}^n \norm{x^{\ell}}^2  \\
& \quad  + 
  n \big( 90 k  (L_H +8 \epsilon \tilde{L}_H)^2 T^2 +  7 (L+4 \epsilon \tilde{L})^2  \big) \sum_{\ell=1}^n \norm{v^{\ell}}^2 \\
 & \quad + n \Big( 10 k (L_I + 14 \epsilon \tilde{L}_I)^2 + 
 7 (L_H +8 \epsilon \tilde{L}_H)^2 T^{-2} \\
& \qquad + 40 k (L_H +8 \epsilon \tilde{L}_H)^2 (L_I + 14 \epsilon \tilde{L}_I) T^2 + 40 k (L_H +8 \epsilon \tilde{L}_H)^4 T^4
 \big) \sum_{\ell=1}^n \norm{x^{\ell}}^4 \\ 
& \quad +  n \Big( 10 k (L_I + 14 \epsilon \tilde{L}_I)^2 T^4 + 
 7 (L_H +8 \epsilon \tilde{L}_H)^2 T^{2} 
\\
& \qquad + 60 k (L_H +8 \epsilon \tilde{L}_H)^2 (L_I + 14 \epsilon \tilde{L}_I) T^6 + 90 k (L_H +8 \epsilon \tilde{L}_H)^4 T^8
 \Big) \sum_{\ell=1}^n \norm{v^{\ell}}^4 \Big] \;. 
 \end{align*}
We then use 
$E[ \norm{\xi^{\ell}}^2 ] = k$ and
$E[ \norm{\xi^{\ell}}^4 ] = 2 k (k+2)$ to obtain
 \begin{align*}
 & \TV{\law(\xi)}{\law(\Phi(\xi)}^2 \\
 & \le \  h^4 \, \Big[ 
 n^2 k \Big(  17 (L+4 \epsilon \tilde{L})^2 +
 28 (L_H +8 \epsilon \tilde{L}_H)^2 T^2
 + 
 104 k (L_H +8 \epsilon \tilde{L}_H)^2 T^2 
 \\  &  \qquad +
 (40 k  +
 20 k^2) (L_I + 14 \epsilon \tilde{L}_I)^2 T^4 + (240 k + 120 k^2) (L_H +8 \epsilon \tilde{L}_H)^2 (L_I + 14 \epsilon \tilde{L}_I) T^6  \\  &  \qquad  + (360 k  
 + 180 k^2 )  (L_H +8 \epsilon \tilde{L}_H)^4 T^8 \Big) 
  \\ 
 & \quad  + n \big( 10 k  (L_H +8 \epsilon \tilde{L}_H)^2 +  7 (L+4 \epsilon \tilde{L})^2 T^{-2} \big) \sum_{\ell=1}^n \norm{x^{\ell}}^2 \\
 & \quad + n \Big( 10 k (L_I + 14 \epsilon \tilde{L}_I)^2 + 
 7 (L_H +8 \epsilon \tilde{L}_H)^2 T^{-2} \\
& \qquad + 40 k (L_H +8 \epsilon \tilde{L}_H)^2 (L_I + 14 \epsilon \tilde{L}_I) T^2 + 40 k (L_H +8 \epsilon \tilde{L}_H)^4 T^4
 \big) \sum_{\ell=1}^n \norm{x^{\ell}}^4
 \Big] \;.
\end{align*} 
Taking square roots and simplifying gives \eqref{tvbound2_mf}.
\end{proof}

 Lemma~\ref{lem:overlap2_mf} implies the following bound on the TV distance between $\mu \pi$ and $\mu \tilde \pi$.  

\begin{lem} \label{lem:tv_strong_error_mf}
We have 
$\TV{\mu \pi}{\mu \tilde \pi}  \le C  h^2$,
where $C$ is defined in \eqref{eq:C_mf}.
\end{lem}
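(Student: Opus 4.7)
The plan is to follow the same template as the proof of Lemma~\ref{lem:tv_strong_error}, adapted to the mean-field setting by substituting Lemma~\ref{lem:overlap2_mf} in place of Lemma~\ref{lem:overlap2}. Concretely, I would start from the coupling characterization \eqref{eq:tv_coupling} of the TV distance, applied to the synchronous coupling of $\mu\pi$ and $\mu\tilde\pi$ in which both chains start from the same point $X\sim\mu$ and then evolve by $\pi$ and $\tilde\pi$ respectively. This immediately yields
\[
\TV{\mu\pi}{\mu\tilde\pi} \ \le\ \int \TV{\delta_x\pi}{\delta_x\tilde\pi}\,\mu(dx)\, .
\]

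Next, I would insert the pointwise bound \eqref{tvbound2_mf} from Lemma~\ref{lem:overlap2_mf} into the integrand. Since that bound has the form $\TV{\delta_x\pi}{\delta_x\tilde\pi}\le h^2\,[F(x)]^{1/2}$ for an explicit affine combination $F(x)$ of $1$, $\sum_{\ell}\norm{x^{\ell}}^2$ and $\sum_{\ell}\norm{x^{\ell}}^4$, I would apply Jensen's inequality to the concave square root to pull the integral inside:
\[
\int [F(x)]^{1/2}\,\mu(dx) \ \le\ \Big(\int F(x)\,\mu(dx)\Big)^{1/2}.
\]
Linearity of the integral then replaces the particle-wise terms $\sum_{\ell}\norm{x^{\ell}}^2$ and $\sum_{\ell}\norm{x^{\ell}}^4$ by $\sum_{\ell}\int\norm{x^{\ell}}^2\mu(dx)$ and $\sum_{\ell}\int\norm{x^{\ell}}^4\mu(dx)$, respectively, which matches the constant $C$ defined in \eqref{eq:C_mf}.

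There is no real obstacle here: this is essentially the same ``take expectation under $\mu$ inside the square root'' argument used for the general-$U$ case, and all the heavy lifting has already been done in Lemma~\ref{lem:overlap2_mf} (which itself rests on Lemmas~\ref{lem:oneshot:2:a_mf} and \ref{lem:oneshot:2:b_mf}, together with the overlap estimate of Lemma~\ref{lem:overlap}). The only bookkeeping step is to check that the coefficients of $1$, $\norm{x^{\ell}}^2$ and $\norm{x^{\ell}}^4$ in the pointwise bound \eqref{tvbound2_mf} are exactly those appearing inside the bracket in \eqref{eq:C_mf}; this is a direct inspection. Given the close parallel to Lemma~\ref{lem:tv_strong_error}, I expect the authors to simply write ``the proof is similar to that of Lemma~\ref{lem:regularization_mf} and is therefore omitted''.
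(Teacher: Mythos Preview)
Your proposal is correct and matches the paper's approach: the paper simply states that ``the proof of this result is similar to the proof of Lemma~\ref{lem:regularization} and therefore omitted,'' and what you have written is precisely the fleshed-out version of that argument (integrate the pointwise bound from Lemma~\ref{lem:overlap2_mf} against $\mu$ and use Jensen for the concave square root). Your prediction of the authors' one-line remark is essentially spot on, except that they cite Lemma~\ref{lem:regularization} rather than Lemma~\ref{lem:regularization_mf}.
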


The proof of this result is similar to the proof of Lemma~\ref{lem:regularization} and therefore omitted.

\subsection{A priori estimates for the dynamics}

Here we develop estimates on the dynamics \eqref{velVerlet} for the special case of the mean-field model. 
Let $\tilde q_t(x,v) = (\tilde q_t^1(x,v), \dots, \tilde q_t^n(x,v))$ and similarly for $\tilde v_t(x,v)$.

\begin{lem} \label{lem:apriori_mf}
For any $x,y,u,v\in\R^{n k}$ and $i \in \{1, \dots, n\}$,
	\begin{align}
	\begin{split} \label{apriori:0a_mf}
& \max_{s \le T} \norm{\tilde{q}_s^i(x,v)}
\, \le \, (1+(L+2 \epsilon \tilde{L}) (T^2+T h)) \max( \norm{x^i}, \norm{x^i + T v^i})  \\ 	
& \qquad + \frac{2 \epsilon \tilde{L} (T^2 + T h)}{n} \sum_{\ell \ne i} \max_{s \le T} \norm{\tilde{q}_s^i(x,v)},
\end{split} \\
\begin{split} \label{apriori:0b_mf} 
& \max_{s \le T} \norm{\tilde{v}_s^i(x,v)} 
\, \le \, \norm{v^i} + \frac{2 \epsilon \tilde{L} T}{n} (1+ (L+2 \epsilon \tilde{L}) (T^2 + T h)) \sum_{\ell \ne i} \max_{s \le T} \norm{\tilde{q}_s^i(x,v)} 	 \\	
 	& \qquad 
 	+ (L+2 \epsilon \tilde{L}) T (1+ (L+2 \epsilon \tilde{L}) (T^2 + T h)) \max( \norm{x^i}, \norm{x^i + T v^i}),
 	\end{split} 
 	\end{align}

\begin{align}
 	\begin{split} \label{apriori:1_mf}
&\sum_{\ell=1}^n \max_{s\leq T}\norm{\tilde{q}^{\ell}_s(x,v)} \, \leq \, (1+(L+4 \epsilon \tilde{L}) (T^2+T h)) \sum_{\ell=1}^n \max(\norm{x^{\ell}},\norm{x^{\ell}+T v^{\ell}}),\\
\end{split} \\
\begin{split} \label{apriori:2_mf}
&\sum_{\ell=1}^n \max_{s\leq T}\norm{\tilde{v}^{\ell}_s(x,v)} \, \leq \,
\sum_{\ell=1}^n \norm{v^{\ell}}  \\
& \qquad + (L + 4 \epsilon \tilde{L}) T (1+(L + 4 \epsilon \tilde{L}) (T^2+T h)) \sum_{\ell=1}^n
\max(\norm{x^{\ell}},\norm{x^{\ell}+T v^{\ell}}), \\
\end{split} \\
\begin{split} \label{apriori:3_mf} 
&\sum_{\ell=1}^n \max_{s\leq T} \norm{\tilde{q}^{\ell}_s(x,u)-\tilde{q}^{\ell}_s(y,v)} 
\\
& \qquad \leq \, (1+ (L + 4 \epsilon \tilde{L}) (T^2+T h))\sum_{\ell=1}^n \max\left(\norm{x^{\ell}-y^{\ell}},\norm{x^{\ell}-y^{\ell}+T (u^{\ell}-v^{\ell})}\right) .
\end{split}
\end{align}
\end{lem}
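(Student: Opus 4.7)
The plan is to follow the same strategy as in Lemmas 3.1 and 3.2 of \cite{BoEbZi2020}, but keep track of the component-wise structure inherited from the mean-field form of $U$. The key observation is the estimate \eqref{eq:dG_i_mf}, which says that the $i$-th component of $\nabla U$ is Lipschitz in $x^i$ with constant $L+2\epsilon\tilde{L}$ and couples to each other component $x^\ell$ only through a constant of size $2\epsilon\tilde{L}/n$. Summing over $i$ then gives a total effective Lipschitz constant
\[
\sum_{i=1}^n \norm{\nabla_i U(x)-\nabla_i U(y)} \, \le\, (L+2\epsilon\tilde L)\sum_\ell \norm{x^\ell-y^\ell} + \frac{2\epsilon\tilde L}{n}\,(n-1)\sum_\ell \norm{x^\ell-y^\ell} \, \le\, (L+4\epsilon\tilde L)\sum_\ell \norm{x^\ell-y^\ell},
\]
which is what is needed to reduce the mean-field bounds to the form of Lemma \ref{lem:apriori}.

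For \eqref{apriori:0a_mf}, I would integrate \eqref{velVerlet} componentwise to obtain
\[
\tilde{q}_t^i(x,v) = x^i + t v^i - \frac{1}{2}\int_0^t (t-s)\bigl[\nabla_i U(\tilde{q}_{\lb{s}}) + \nabla_i U(\tilde{q}_{\ub{s}})\bigr]ds + \frac{1}{2}\int_0^t (s-\lb{s})\bigl[\nabla_i U(\tilde{q}_{\ub{s}}) - \nabla_i U(\tilde{q}_{\lb{s}})\bigr]ds,
\]
take norms, and apply the mean-field bound on $\norm{\nabla_i U}$ from \eqref{eq:dG_i_mf}. This produces the convex-combination estimate
\[
\max_{s\le T}\norm{\tilde q_s^i} \, \le\, \max(\norm{x^i},\norm{x^i+Tv^i}) + (L+2\epsilon\tilde L)(T^2+Th)\max_{s\le T}\norm{\tilde q_s^i} + \frac{2\epsilon\tilde L(T^2+Th)}{n}\sum_{\ell\ne i}\max_{s\le T}\norm{\tilde q_s^\ell},
\]
which rearranges to \eqref{apriori:0a_mf} after absorbing the $\max_{s\le T}\norm{\tilde q_s^i}$ term on the right into the left-hand side using \eqref{eq::Tconstraint_mf}. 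An analogous direct computation from the Verlet update for $\tilde v_s^i$, combined with \eqref{apriori:0a_mf}, gives \eqref{apriori:0b_mf}.

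For the summed bounds \eqref{apriori:1_mf} and \eqref{apriori:2_mf}, I would sum the componentwise integrated Verlet identity over $i=1,\dots,n$, exchange the order of summation on the cross-interaction double sum, and apply the observation in the first paragraph so that the effective Lipschitz constant becomes $L+4\epsilon\tilde L$. Under \eqref{eq::Tconstraint_mf}, the same absorption argument as in the scalar case of Lemma \ref{lem:apriori} then yields \eqref{apriori:1_mf}, and plugging this into the summed Verlet identity for $\tilde v_s^\ell$ gives \eqref{apriori:2_mf}. Finally, \eqref{apriori:3_mf} is obtained by applying the same summed argument to the difference $\tilde q_s^i(x,u)-\tilde q_s^i(y,v)$, where the mean-field Lipschitz estimate \eqref{eq:dG_i_mf} on $\nabla_i U(\tilde q(x,u))-\nabla_i U(\tilde q(y,v))$ produces, after summing over $i$, exactly the constant $L+4\epsilon\tilde L$. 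There is no genuinely new difficulty beyond bookkeeping; the only mild subtlety is making sure the cross-interaction double sum is handled cleanly (the factor $(n-1)/n \le 1$) so that the effective constant is exactly $L+4\epsilon\tilde L$ and the constraint \eqref{eq::Tconstraint_mf} is strong enough to absorb the implicit term.
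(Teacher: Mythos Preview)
Your proposal is correct and aligns with what the paper does: the paper actually omits the proof, noting only that Lemma~\ref{lem:apriori_mf} is the mean-field analogue of Lemma~\ref{lem:apriori} and that the result is contained in Lemmas~12 and~13 of \cite{BoSc2020}. Your sketch---integrating \eqref{velVerlet} componentwise, invoking the mean-field Lipschitz structure \eqref{eq:dG_i_mf}, and summing so that the effective constant becomes $L+4\epsilon\tilde L$---is precisely the mechanism behind those cited lemmas, so you have in fact supplied more detail than the paper itself.
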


Lemma~\ref{lem:apriori_mf} is the mean-field analog of Lemma~\ref{lem:apriori} and is contained in Lemmas~12 and 13 of \cite{BoSc2020}.  Hence, a proof is omitted. 

\subsection{A priori estimates for the velocity derivative flow}

Here we develop estimates on the velocity derivative flows $\partial_{v^j} \tilde q_s^i = \partial \tilde q_s^i/\partial v^j$.

\begin{lem} \label{lem:apriori2_mf}
For any $x,y,u,v\in\R^{n k}$ and $j \in \{1, \dots, n \}$,  
	\begin{align}
	\label{apriori:5_mf}
& \sum_{\ell=1}^n \max_{s\leq T}\mnorm{ \partial_{v^j} \tilde q_s^{\ell}(x,v) } \, \leq \, (6/5) \, T \, , \\
	\label{apriori:5b_mf}
& \sum_{\ell=1}^n \max_{s\leq T}\mnorm{ \partial_{v^{\ell}} \tilde q_s^j(x,v) } \, \leq \, (7/5) \, T   \, , \\
	\label{apriori:6_mf}
&\sum_{\ell=1}^n \max_{s\leq T}\mnorm{ \partial_{v^j} \tilde{v}_s^{\ell}(x,v) } \, \leq \, (6/5) \, , \quad\mbox{and} \\ 
\begin{split} \label{apriori:7_mf} 
&\sum_{i=1}^n  \sum_{\ell=1}^n \max_{s\leq T} \mnorm{\partial_{v^i}  \tilde{q}^{\ell}_s(x,u)-\partial_{v^i}  \tilde{q}^{\ell}_s(y,v)} \, \leq \,   (42/25) (L_H + 8 \epsilon \tilde{L}_H)  T^3  \\
& \qquad \times (1+(L + 4 \epsilon \tilde{L}) (T^2+T h)) \sum_{\ell=1}^n \max\left(\norm{x^{\ell}-y^{\ell}},\norm{x^{\ell}-y^{\ell}+T (u^{\ell}-v^{\ell})}\right)  . 
\end{split}
\end{align}
\end{lem}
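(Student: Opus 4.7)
The plan is to mirror the proof of Lemma~\ref{lem:apriori2} componentwise, exploiting the mean-field Hessian structure \eqref{eq:L_mf} and its Lipschitz refinement \eqref{eq:dH_ij_mf} so that the resulting prefactors are independent of the number of particles $n$. First I would differentiate the Verlet recursion \eqref{velVerlet} with respect to a single velocity block $v^j\in\mathbb{R}^k$ to obtain, for each position index $p$,
\begin{equation*}
\partial_{v^j}\tilde q_T^p \;=\; T\,\delta_{pj}\,I_k \;-\; \sum_m\frac{1}{2}\int_0^T(T-s)\,\bigl[\mathbf{H}_{pm}(\tilde q_{\lb s})\partial_{v^j}\tilde q_{\lb s}^m + \mathbf{H}_{pm}(\tilde q_{\ub s})\partial_{v^j}\tilde q_{\ub s}^m\bigr]\,ds \,+\, \text{(smaller terms)},
\end{equation*}
and an analogous identity for $\partial_{v^j}\tilde v_T^p$.

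For \eqref{apriori:5_mf} I would take operator norms, sum over the position index $\ell$ with $j$ fixed, interchange the sums, and apply the column-sum bound $\sum_\ell\mnorm{\mathbf{H}_{\ell m}}\le L+4\epsilon\tilde L$ (which follows immediately from \eqref{eq:L_mf}) to get
\begin{equation*}
\sum_\ell\max_s\mnorm{\partial_{v^j}\tilde q_s^\ell}\;\le\;T\,+\,(L+4\epsilon\tilde L)\,T^2\,\sum_m\max_s\mnorm{\partial_{v^j}\tilde q_s^m}.
\end{equation*}
Because $(L+4\epsilon\tilde L)T^2\le 1/6$ by \eqref{eq::Tconstraint_mf}, this absorbs into $(6/5)T$. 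The bound \eqref{apriori:6_mf} follows by the same rearrangement applied to the $\tilde v$-equation (where the integrals have length $T$ rather than $T^2$), combined with \eqref{apriori:5_mf}.

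The estimate \eqref{apriori:5b_mf} is the most delicate, because the summation index now ranges over the \emph{velocity} block and the same row-sum argument does not close. I would set $h(j):=\sum_\ell\max_s\mnorm{\partial_{v^\ell}\tilde q_s^j}$, derive the recursion
\begin{equation*}
h(j)\;\le\; T\,+\,(L+2\epsilon\tilde L)\,T^2\,h(j)\,+\,\frac{2\epsilon\tilde L}{n}\,T^2\,\sum_{m\ne j}h(m),
\end{equation*}
and first sum over $j$ (using the column-sum argument once more) to obtain $\sum_j h(j)\le(6/5)nT$. Substituting back and using both $(L+2\epsilon\tilde L)T^2\le 1/6$ and $4\epsilon\tilde L\,T^2\le 1/6$ yields $h(j)\le(6/5)T+(72/25)\epsilon\tilde L\,T^3\le(33/25)T<(7/5)T$.

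For the stability estimate \eqref{apriori:7_mf}, I would write the difference $\partial_{v^i}\tilde q_T^{(1),\ell}-\partial_{v^i}\tilde q_T^{(2),\ell}$ in the same form used in the proof of \eqref{apriori:7}, splitting each integrand into $\mathbf{H}_{\ell m}(\tilde q_s^{(2)})\bigl[\partial_{v^i}\tilde q_s^{(2),m}-\partial_{v^i}\tilde q_s^{(1),m}\bigr]$ plus $\bigl[\mathbf{H}_{\ell m}(\tilde q_s^{(1)})-\mathbf{H}_{\ell m}(\tilde q_s^{(2)})\bigr]\partial_{v^i}\tilde q_s^{(1),m}$. After taking norms and double-summing over $i,\ell$, the first piece gives a contraction factor $(L+4\epsilon\tilde L)T^2\le 1/6$ via the column-sum bound; for the second, the block Lipschitz estimate \eqref{eq:dH_ij_mf} combines (after summing both indices carefully) to
\begin{equation*}
\sum_\ell\sum_m\mnorm{\mathbf{H}_{\ell m}(\tilde q_s^{(1)})-\mathbf{H}_{\ell m}(\tilde q_s^{(2)})}\;\le\;(L_H+8\epsilon\tilde L_H)\,\sum_p\norm{\tilde q_s^{(1),p}-\tilde q_s^{(2),p}}.
\end{equation*}
This, together with $\sum_i\max_s\mnorm{\partial_{v^i}\tilde q_s^{(1),m}}\le(7/5)T$ from \eqref{apriori:5b_mf} and the a priori bound \eqref{apriori:3_mf}, will produce \eqref{apriori:7_mf} after absorbing the contraction and collecting constants $(6/5)(7/5)=42/25$. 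The main obstacle is precisely this last bookkeeping step: the off-diagonal mean-field Hessian Lipschitz contributions of size $2\epsilon\tilde L_H/n$ must be summed carefully over both row and column indices so that the total constant is $L_H+8\epsilon\tilde L_H$ and no spurious factor of $n$ appears.
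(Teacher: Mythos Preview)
Your proposal is correct and follows essentially the same route as the paper's proof: componentwise differentiation of \eqref{velVerlet}, column-sum absorption via \eqref{eq:L_mf} and $(L+4\epsilon\tilde L)T^2\le 1/6$ for \eqref{apriori:5_mf} and \eqref{apriori:6_mf}, the two-step argument for \eqref{apriori:5b_mf} (the paper phrases the intermediate step as a direct application of \eqref{apriori:5_mf} to the inner sum rather than first summing $h(j)$ over $j$, but these are identical since the double sum is symmetric), and the same splitting with \eqref{eq:dH_ij_mf} and \eqref{apriori:5b_mf} for \eqref{apriori:7_mf}. Your bookkeeping for the $L_H+8\epsilon\tilde L_H$ constant and the final $(6/5)(7/5)=42/25$ factor matches the paper exactly.
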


Lemma~\ref{lem:apriori2_mf} is the mean-field analog of Lemma~\ref{lem:apriori2}.

\begin{proof}
From \eqref{velVerlet}, note that for any $\ell, j \in \{1, \dots, n\}$,  \begin{align} 
\begin{split} \label{dervelVerlet_mf}
& \partial_{v^j} \tilde{q}_T^{\ell}(x,u)   =  T I_k \delta_{\ell j} \\
& \qquad - \frac{1}{2} \int_0^T (T - s) \sum_{i=1}^n \left[  \mathbf{H}_{\ell i}(\tilde{q}_{\lb{s}}) \partial_{v^j}  \tilde{q}^i_{\lb{s}}  + \mathbf{H}_{\ell i}(\tilde{q}_{\ub{s}}) \partial_{v^j}  \tilde{q}^i_{\ub{s}} \right] ds  \\
    & \qquad + \frac{1}{2} \int_0^T (s - \lb{s}) \sum_{i=1}^n \left[ \mathbf{H}_{\ell i}(\tilde{q}_{\ub{s}}) \partial_{v^j} \tilde{q}^i_{\ub{s}} - \mathbf{H}_{\ell i}(\tilde{q}_{\lb{s}}) \partial_{v^j} \tilde{q}^i_{\lb{s}} \right] ds \;,  
    \end{split} \\
& \partial_{v^j} \tilde{v}_T^{\ell}(x,u)   = I_k \delta_{\ell j} - \frac{1}{2} \int_0^T \sum_{i=1}^n \left[ \mathbf{H}_{\ell i}(\tilde{q}_{\lb{s}}) \partial_{v^j} \tilde{q}^i_{\lb{s}}  + \mathbf{H}_{\ell i}(\tilde{q}_{\ub{s}}) \partial_{v^j} \tilde{q}^i_{\ub{s}} \right] ds  . \nonumber 
\end{align} Using \eqref{eq:L_mf} and inserting $(L + 4 \epsilon \tilde L) T^2 \le 1/6$, we obtain \begin{align*}
&\sum_{\ell=1}^n \max_{s \le T}  \mnorm{\partial_{v^j} \tilde{q}_T^{\ell}(x,u)} \ \le \ T \mnorm{I_k} + T^2   \sum_{\ell=1}^n \sum_{i=1}^n \max_{s \le T} \mnorm{\mathbf{H}_{\ell i}(\tilde{q}_s) \partial_{v^j} \tilde{q}^i_s(x,u)}    \\
\ &\qquad \le \ T + (L + 4 \epsilon \tilde{L}) T^2 \sum_{i=1}^n \max_{s \le T}  \mnorm{\partial_{v^j} \tilde{q}^i_s(x,u)}  \ \le \ (6/5) T \;, \\
&\sum_{\ell=1}^n \max_{s \le T}  \mnorm{\partial_{v^j} \tilde{v}_s^{\ell}(x,u)} \ \le \ 
1 + (L+4 \epsilon \tilde L) T \sum_{\ell=1}^n \max_{s \le T}  \mnorm{\partial_{v^j} \tilde{q}_T^{\ell}(x,u)} \le 6/5 \;, 
\end{align*} 
which gives \eqref{apriori:5_mf} and 
\eqref{apriori:6_mf}.  For \eqref{apriori:5b_mf}, by \eqref{eq:L_mf} and $(L + 4 \epsilon \tilde L) T^2 \le 1/6$ imply that,
\begin{align*}
&\sum_{\ell=1}^n \max_{s \le T}  \mnorm{\partial_{v^{\ell}} \tilde{q}_T^{j}(x,u)} \ \le \ T + T^2   \sum_{\ell=1}^n \sum_{i=1}^n \max_{s \le T} \mnorm{\mathbf{H}_{j i}(\tilde{q}_s) \partial_{v^{\ell}} \tilde{q}^i_s(x,u)}    \\
\  &\qquad\le \ T + (L + 2 \epsilon \tilde{L}) T^2 \sum_{\ell=1}^n \max_{s \le T}  \mnorm{\partial_{v^{\ell}} \tilde{q}^{j}_s(x,u)} +   T^2 \frac{2 \epsilon \tilde{L}}{n} \sum_{\ell=1}^n  \sum_{i=1}^n \max_{s \le T}  \mnorm{\partial_{v^{\ell}} \tilde{q}^{i}_s(x,u)}  \\
\ &\qquad\le \ (6/5) T +2 (6/5)^2 T^3  \epsilon \tilde{L} \ \le \ (7/5) T  \;, \end{align*}
where in the second to last step we used \eqref{apriori:5_mf}, and in the last step, we used $\epsilon \tilde L T^2 \le 1/24$ and $33/25 \le 7/5$.  For \eqref{apriori:7_mf}, it is notationally convenient to introduce $\tilde{q}_s^{(1),\ell} : = \tilde{q}_s^{\ell}(x,u)$ and $\tilde{q}_s^{(2),\ell} : = \tilde{q}_s^{\ell}(y,v)$ for any $s \in [0,T]$ and $\ell \in \{1, \dots, n\}$.  Then from \eqref{dervelVerlet_mf}  \begin{align*}
& \partial_{v^j}\tilde{q}_T^{(1),\ell} - \partial_{v^j} \tilde{q}_T^{(2),\ell}   =  -  \int\displaylimits_0^T \frac{T - s}{2} \sum_{i=1}^n \left[ \mathbf{H}_{\ell i}(\tilde{q}^{(1)}_{\lb{s}}) \partial_{v^j}  \tilde{q}^{(1),i}_{\lb{s}}  - \mathbf{H}_{\ell i}(\tilde{q}^{(2)}_{\lb{s}}) \partial_{v^j}  \tilde{q}^{(2),i}_{\lb{s}}\right] ds  \\
&\qquad\qquad\qquad -  \int\displaylimits_0^T \frac{T - s}{2} \sum_{i=1}^n \left[  \mathbf{H}_{\ell i}(\tilde{q}_{\ub{s}}^{(1)}) \partial_{v^j} \tilde{q}_{\ub{s}}^{(1),i} -  \mathbf{H}_{\ell i}(\tilde{q}_{\ub{s}}^{(2)}) \partial_{v^j} \tilde{q}_{\ub{s}}^{(2),i} \right] ds  \\
    &\qquad\qquad\qquad +  \int\displaylimits_0^T \frac{s - \lb{s}}{2} \sum_{i=1}^n \left[ \mathbf{H}_{\ell i}(\tilde{q}_{\ub{s}}^{(1)}) \partial_{v^j} \tilde{q}_{\ub{s}}^{(1),i} - \mathbf{H}_{\ell i}(\tilde{q}_{\ub{s}}^{(2)}) \partial_{v^j} \tilde{q}_{\ub{s}}^{(2),i}  \right] ds \\
        &\qquad\qquad\qquad +  \int\displaylimits_0^T \frac{s - \lb{s}}{2} \sum_{i=1}^n \left[ \mathbf{H}_{\ell i}(\tilde{q}_{\lb{s}}^{(2)}) \partial_{v^j}  \tilde{q}_{\lb{s}}^{(2),i}  - \mathbf{H}_{\ell i}(\tilde{q}_{\lb{s}}^{(1)}) \partial_{v^j}  \tilde{q}_{\lb{s}}^{(1),i} \right] ds \\
& =  \int\displaylimits_0^T \frac{T - s}{2} \sum_{i=1}^n  \left[ \mathbf{H}_{\ell i}(\tilde{q}_{\lb{s}}^{(2)}) ( \partial_{v^j} \tilde{q}_{\lb{s}}^{(2),i} - \partial_{v^j} \tilde{q}_{\lb{s}}^{(1),i} )   - 
( \mathbf{H}_{\ell i}(\tilde{q}_{\lb{s}}^{(1)}) -  \mathbf{H}_{\ell i}(\tilde{q}_{\lb{s}}^{(2)})) \partial_{v^j} \tilde{q}_{\lb{s}}^{(1)} \right] ds  \\
& +  \int\displaylimits_0^T \frac{T - s}{2} \sum_{i=1}^n  \left[ \mathbf{H}_{\ell i}(\tilde{q}_{\ub{s}}^{(2)}) ( \partial_{v^j} \tilde{q}_{\ub{s}}^{(2),i} - \partial_{v^j} \tilde{q}_{\ub{s}}^{(1),i}) - 
( \mathbf{H}_{\ell i}(\tilde{q}_{\ub{s}}^{(1)}) - \mathbf{H}_{\ell i}(\tilde{q}_{\ub{s}}^{(2)}) )  \partial_{v^j} \tilde{q}_{\ub{s}}^{(1),i}
 \right] ds  \\
    & -  \int\displaylimits_0^T \frac{s - \lb{s}}{2} \sum_{i=1}^n  \left[  \mathbf{H}_{\ell i}(\tilde{q}_{\ub{s}}^{(2)}) ( \partial_{v^j} \tilde{q}_{\ub{s}}^{(2),i}  -\partial_{v^j} \tilde{q}_{\ub{s}}^{(1),i})  -  ( \mathbf{H}_{\ell i}(\tilde{q}_{\ub{s}}^{(1)}) - \mathbf{H}_{\ell i}(\tilde{q}_{\ub{s}}^{(2)}) )  \partial_{v^j} \tilde{q}_{\ub{s}}^{(1),i} \right] ds \\
        & - \int\displaylimits_0^T \frac{s - \lb{s}}{2} \sum_{i=1}^n  \left[ \mathbf{H}_{\ell i}(\tilde{q}_{\lb{s}}^{(1)}) ( \partial_{v^j} \tilde{q}_{\lb{s}}^{(1),i} - \partial_{v^j} \tilde{q}_{\lb{s}}^{(2),i}) - ( \mathbf{H}_{\ell i}(\tilde{q}_{\lb{s}}^{(2)}) - \mathbf{H}_{\ell i}(\tilde{q}_{\lb{s}}^{(1)}) ) \partial_{v^j} \tilde{q}_{\lb{s}}^{(2),i}    \right] ds.
\end{align*}
By using \eqref{eq:L_mf}, \eqref{eq:dH_ij_mf}, and \eqref{apriori:5b_mf},   we then get 
\begin{align*}
& \sum_{j=1}^n \sum_{\ell=1}^n \max_{s \le T} \mnorm{\partial_{v^j}\tilde{q}_T^{(1),\ell} - \partial_{v^j} \tilde{q}_T^{(2),\ell} } \le  (L + 4 \epsilon \tilde{L}) T^2 
\sum_{j=1}^n \sum_{\ell=1}^n  
\max_{s \le T} \mnorm{ \partial_{v^j}\tilde{q}_T^{(1),\ell} - \partial_{v^j} \tilde{q}_T^{(2),\ell} } \\
& \qquad + (L_H + 8 \epsilon \tilde{L}_H)  (7/5) T^3 \sum_{\ell=1}^n \max_{s \le T} \norm{\tilde{q}_s^{(1),\ell} - \tilde{q}_s^{(2),\ell}} \;. 
\end{align*}
Inserting $(L + 4 \epsilon \tilde{L}) T^2 \le 1/6$ and \eqref{apriori:3_mf}, and simplifying yields \eqref{apriori:7_mf}.  
\end{proof}

\subsection{Discretization error bounds}

The following lemma is the mean-field analog of Lemma~\ref{lem:verlet_error:1}.
\begin{lem} \label{lem:verlet_error:1_mf}
For any $x,v\in\R^{nk}$,
\begin{equation} \label{verlet_error:1_mf}
	\begin{aligned}
&\sum_{\ell=1}^n \max_{s\leq T}\norm{ q_s^{\ell}(x,v) -  \tilde{q}^{\ell}_s(x,v)}    \ \leq \  h^2 \Big(    (L+4 \epsilon \tilde{L}) \sum_{\ell=1}^n \norm{x^{\ell}}  
 + (L+4 \epsilon \tilde{L}) T \sum_{\ell=1}^n 
  \norm{v^{\ell}}
\\
&\qquad\qquad 
 +  (L_H +8 \epsilon \tilde{L}_H)  \sum_{\ell=1}^n \norm{x^{\ell}}^2 
 +  (L_H +8 \epsilon \tilde{L}_H) T \sum_{\ell=1}^n \norm{v^{\ell}}^2  \Big) \;.
\end{aligned}
\end{equation}
\end{lem}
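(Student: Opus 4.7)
The plan is to mirror the structure of the single-particle proof of Lemma~\ref{lem:verlet_error:1} componentwise in the particle index $\ell$, sum the resulting bounds over $\ell$, and exploit the mean-field decomposition of $U$ to control the off-diagonal cross-particle contributions. Writing $e_s^{\ell} := q_s^{\ell}(x,v)-\tilde q_s^{\ell}(x,v)$ and integrating \eqref{velVerlet}, I would decompose
\[
e_T^{\ell} \ = \ \rn{1}^{\ell} + \rn{2}^{\ell} + \rn{3}^{\ell},
\]
where $\rn{1}^{\ell}$ collects terms of the form $\nabla_{\ell} U(\tilde q_s)-\nabla_{\ell} U(q_s)$, $\rn{2}^{\ell}$ collects the jump-type terms involving $\nabla_{\ell} U(q_{\ub{s}})-\nabla_{\ell} U(q_{\lb{s}})$, and $\rn{3}^{\ell}$ is the trapezoidal quadrature error for the exact flow, exactly as in Lemma~\ref{lem:verlet_error:1}.

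For $\sum_{\ell}\|\rn{1}^{\ell}\|$, I would apply \eqref{eq:dG_i_mf} and note that $\sum_{\ell}\sum_{i\neq \ell} = (1-1/n)\sum_{i}\sum_{\ell}$, producing the effective Lipschitz constant $L+4\epsilon\tilde L$; combined with \eqref{eq::Tconstraint_mf} this gives $\sum_{\ell}\|\rn{1}^{\ell}\|\le (1/6)\sum_{\ell}\max_{s\le T}\|e_s^{\ell}\|$. For $\sum_{\ell}\|\rn{2}^{\ell}\|$, the same Lipschitz bound together with $q_{\ub{s}}^{i}-q_{\lb{s}}^{i}=\int_{\lb{s}}^{\ub{s}} v_r^{i}\,dr$ and \eqref{apriori:2_mf} (with $h=0$) yields a bound proportional to $(L+4\epsilon\tilde L)\,h^2 T\,\sum_{\ell}\max_s\|v_s^{\ell}\|$, which, after using \eqref{apriori:2_mf} once more, is controlled by the claimed $(L+4\epsilon\tilde L)$-terms in \eqref{verlet_error:1_mf}.

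For $\sum_{\ell}\|\rn{3}^{\ell}\|$, I would apply Lemma~\ref{trapz} with $f(s)=-\nabla_{\ell} U(q_s)$ and the bounds $\sum_{\ell}\|f'(s)\|\le(L+4\epsilon\tilde L)\sum_{i}\|v_s^{i}\|$ (from \eqref{eq:L_mf}) and $\sum_{\ell}\|f''(s)\|\le(L_H+8\epsilon\tilde L_H)\sum_{i}\|v_s^{i}\|^2+(L+4\epsilon\tilde L)^2\sum_{i}\|q_s^{i}\|$. The first inequality is an immediate consequence of $\sum_{\ell}\|\mathbf H_{\ell i}\|\le L+4\epsilon\tilde L$; the second is the crux of the argument: writing $f''(s)=-\sum_{i,j}\partial_{\ell ij}U(q_s)(v_s^{i},v_s^{j})+\sum_{i}\mathbf H_{\ell i}(q_s)\nabla_{i}U(q_s)$ and using \eqref{eq:LH_mf} to separate the diagonal ($i=j=\ell$), single-off-diagonal ($i=j\ne\ell$ or exactly one of $i,j$ equals $\ell$), and Cauchy–Schwarz on $\sum_{\ell,j:\ell\ne j}\|v^{\ell}\|\|v^{j}\|\le n\sum_{i}\|v^{i}\|^2$ to collapse the $n$-dependence of the $2\epsilon\tilde L_H/n$ terms; the diagonal term contributes $(L_H+2\epsilon\tilde L_H)$, and the three classes of off-diagonal terms each contribute $2\epsilon\tilde L_H$, adding up to $(L_H+8\epsilon\tilde L_H)$. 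Finally, insert the a priori bounds $\sum_{\ell}\|q_s^{\ell}\|\le (7/6)\sum_{\ell}(\|x^{\ell}\|+T\|v^{\ell}\|)$, $\sum_{\ell}\|v_s^{\ell}\|\lesssim (L+4\epsilon\tilde L)T\sum_{\ell}\|x^{\ell}\|+\sum_{\ell}\|v^{\ell}\|$, and $\sum_{\ell}\|v_s^{\ell}\|^2\lesssim (L+4\epsilon\tilde L)\sum_{\ell}\|x^{\ell}\|^2+\sum_{\ell}\|v^{\ell}\|^2$ from Lemma~\ref{lem:apriori_mf} (with $h=0$), combine the three estimates, absorb the $(1/6)\sum_{\ell}\max_s\|e_s^{\ell}\|$ onto the left, and collect coefficients to obtain \eqref{verlet_error:1_mf}.

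The main technical obstacle is the careful bookkeeping in the $\sum_{\ell}\|f''(s)\|$ bound: the third-derivative tensor has entries of size $2\epsilon\tilde L_H/n$ in $O(n)$ off-diagonal positions per fixed $\ell$, and one needs the Cauchy–Schwarz estimate $(\sum_{i}\|v^{i}\|)^2\le n\sum_{i}\|v^{i}\|^2$ to see that these precisely produce an $O(1)$ contribution rather than blowing up with $n$. Once this cancellation is understood, the remaining steps are routine adaptations of the single-particle proof.
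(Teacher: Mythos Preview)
Your proposal is correct and follows essentially the same route as the paper: the same $\rn{1}^{\ell}+\rn{2}^{\ell}+\rn{3}^{\ell}$ decomposition, the same use of \eqref{eq:dG_i_mf} and \eqref{eq::Tconstraint_mf} for $\rn{1}^{\ell}$ and $\rn{2}^{\ell}$, and Lemma~\ref{trapz} with $f(s)=-\nabla_\ell U(q_s)$ for $\rn{3}^{\ell}$. Your detailed bookkeeping for the $(L_H+8\epsilon\tilde L_H)$ constant in $\sum_\ell\|f''(s)\|$ is exactly what underlies the paper's bound \eqref{qerr:3_mf}, and the squared a~priori estimates you invoke are stated (without derivation) as \eqref{max:q2_mf}--\eqref{max:v2_mf} in the paper's proof rather than in Lemma~\ref{lem:apriori_mf} itself, but they follow from the per-particle bounds \eqref{apriori:0a_mf}--\eqref{apriori:0b_mf} by the Cauchy--Schwarz argument you describe.
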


\begin{proof}
By \eqref{apriori:1_mf} and \eqref{apriori:2_mf} with $h=0$, and since $(L+4 \epsilon \tilde{L}) T^2 \le 1/6$, note that \begin{align}
    \label{max:q_mf} \sum_{\ell=1}^n \max_{s \le T} \norm{q_s^{\ell}} \ &\le \ (7/6) \, \sum_{\ell=1}^n  \norm{x^{\ell}} + (7/6) T \sum_{\ell=1}^n \norm{v^{\ell}} \;,  \\
    \label{max:v_mf} \sum_{\ell=1}^n \max_{s \le T} \norm{v_s^{\ell}} \ &\le \ (7/6) \, (L + 4 \epsilon \tilde{L}) T \sum_{\ell=1}^n \norm{x^{\ell}}  +(6/5) \sum_{\ell=1}^n \norm{v^{\ell}}    \;,  \\
            \label{max:q2_mf} \sum_{\ell=1}^n \max_{s \le T} \norm{q_s^{\ell}}^2 \ &\le \   (9/2) \sum_{\ell=1}^n \norm{x^{\ell}}^2 +  (9/2) T^2 \sum_{\ell=1}^n \norm{v^{\ell}}^2 \;, \\
        \label{max:v2_mf} \sum_{\ell=1}^n \max_{s \le T} \norm{v_s^{\ell}}^2 \ &\le \  (9/2) (L+4 \epsilon \tilde{L}) \sum_{\ell=1}^n \norm{x^{\ell}}^2 +  (9/2) \sum_{\ell=1}^n \norm{v^{\ell}}^2 \;.
\end{align} Introduce the shorthand $q_T:= q_T(x,v)$ and $\tilde q_T := \tilde q_T(x,v)$.  Using \eqref{velVerlet}, note that
 \begin{align}
 q_T^{\ell} &  - \tilde q_T^{\ell} \ = \ \rn{1}^{\ell} + \rn{2}^{\ell} + \rn{3}^{\ell} \quad \text{where} \label{diff:q_tq_mf} \\
\rn{1}^{\ell} \ &:= \ \frac{1}{2} \int_0^T (T-s) [ \nabla_{\ell} U(\tilde q_{\lb{s}} ) - \nabla_{\ell} U(q_{\lb{s}} ) + \nabla_{\ell} U(\tilde q_{\ub{s}}) - \nabla_{\ell} U(q_{\ub{s}}) ] ds  \nonumber \\
& - \frac{1}{2} \int_0^T ( s - \lb{s}) [ \nabla_{\ell} U(\tilde q_{\ub{s}}) - \nabla_{\ell} U(q_{\ub{s}}) - (\nabla_{\ell} U(\tilde q_{\lb{s}}) - \nabla_{\ell} U(q_{\lb{s}}) ) ] ds \;, \nonumber \\
\rn{2}^{\ell} \ &:= \ -\frac{1}{2} \int_0^T ( s - \lb{s}) [ \nabla_{\ell} U(q_{\ub{s}}) - \nabla_{\ell} U(q_{\lb{s}}) ] ds \;,  \quad \text{and} \nonumber \\
\rn{3}^{\ell} \ &:= \ - \int_0^T (T-s) \nabla_{\ell} U(q_s) ds + \frac{1}{2} \int_0^T (T-s) [ \nabla_{\ell} U(q_{\lb{s}} ) + \nabla_{\ell} U(q_{\ub{s}}) ] ds   \;. \nonumber
\end{align} 
By \eqref{eq:dG_i_mf} and since $(L + 4 \epsilon \tilde{L}) T^2 \le 1/6$,  \begin{align} \label{qerr:1_mf}
&   \sum_{\ell=1}^n \norm{\rn{1}^{\ell}}  \le  (L + 4 \epsilon \tilde{L})  T^2 \, \sum_{\ell=1}^n \max_{s \le T} \norm{q_s^{\ell} - \tilde q_s^{\ell}} \ \le \ (1/6) \, \sum_{\ell=1}^n  \max_{s \le T} \norm{q_s^{\ell} - \tilde q_s^{\ell}}, \\
& \sum_{\ell=1}^n  \norm{\rn{2}^{\ell}}  \le  \frac{h (L + 4 \epsilon \tilde{L})}{2}  \sum_{\ell=1}^n \int_0^T \norm{q_{\ub{s}}^{\ell}-q_{\lb{s}}^{\ell}} ds   =  \frac{h (L + 4 \epsilon \tilde{L})}{2}  \sum_{\ell=1}^n \int_0^T \norm{\int_{\lb{s}}^{\ub{s}} v_r^{\ell} dr} ds, \nonumber \\
  &\quad \le   \frac{h^2  (L + 4 \epsilon \tilde{L}) T}{2}  \sum_{\ell=1}^n \max_{s \le T} \norm{v_s^{\ell}} \ \le \  (L+4 \epsilon \tilde{L}) h^2 \, \Big( \frac{7}{72} \sum_{\ell=1}^n \norm{x^{\ell}} + \frac{3}{5} \, T \,\sum_{\ell=1}^n \norm{v^{\ell}} \Big) \label{qerr:2_mf} \;,
\end{align}
where for \eqref{qerr:2_mf} we  used \eqref{apriori:2_mf}. Applying Lemma~\ref{trapz} with 
$f(s) = - \nabla_{\ell} U(q_s)$ we obtain \begin{equation}   \label{qerr:3_mf}
\begin{aligned}
& \sum_{\ell=1}^n \norm{\rn{3}^{\ell}} \le \frac{h^2}{12} \Big( 
( L+ 4 \epsilon \tilde{L}) T \sum_{\ell=1}^n \max_{s \le T} \norm{v_s^{\ell}} +
( L+ 4 \epsilon \tilde{L})^2 T^2 \sum_{\ell=1}^n \max_{s \le T} \norm{q_s^{\ell}} 
\\
& \qquad + (L_H + 8 \epsilon \tilde{L}_H) T^2 \sum_{\ell=1}^n \max_{s \le T} \norm{ v_s^{\ell}}^2  \Big)   \;.
\end{aligned}
\end{equation}
Inserting \eqref{qerr:1_mf}, \eqref{qerr:2_mf}  and \eqref{qerr:3_mf} into the norm of \eqref{diff:q_tq_mf} summed over $\ell$; then inserting \eqref{max:q_mf}, \eqref{max:v_mf}, and \eqref{max:v2_mf}; and then simplifying gives \eqref{verlet_error:1_mf}.
\end{proof}

\begin{lem} \label{lem:verlet_error:2_mf}
For any $x,v\in\R^{nk}$,
\begin{equation} \label{verlet_error:2_mf}
	\begin{aligned}
&\sum_{i=1}^n \sum_{\ell=1}^n \max_{s\leq T} \mnorm{\partial_{v^i} q^{\ell}_s(x,v)-\partial_{v^i}  \tilde{q}^{\ell}_s(x,v)}  \\
&\quad\leq \  h^2 \Big( 
(L+4 \epsilon \tilde{L}) T n  + 
(L_H + 8 \epsilon \tilde{L}_H) T \sum_{\ell=1}^n \norm{x^{\ell}}   + 2 (L_H +8 \epsilon \tilde{L}_H) T^2  \sum_{\ell=1}^n \norm{v^{\ell}}
\\
&\quad \qquad
 + \big( 2 (L_H +8 \epsilon \tilde{L}_H)^2 T^3  + (L_I + 14 \epsilon \tilde{L}_I) T  \big)  \sum_{\ell=1}^n \norm{x^{\ell}}^2 \\
&\quad\qquad
 + \big( 2 (L_H +8 \epsilon \tilde{L}_H)^2 T^5  + (L_I + 14 \epsilon \tilde{L}_I) T^3  \big) \sum_{\ell=1}^n \norm{v^{\ell}}^2  \Big) \;.
\end{aligned}
\end{equation}
\end{lem}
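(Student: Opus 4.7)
The plan is to follow the same four-term decomposition used in the proof of Lemma~\ref{lem:verlet_error:2}, but carried out componentwise in the mean-field index $\ell\in\{1,\dots ,n\}$ and using the mean-field Hessian/third/fourth derivative bounds \eqref{eq:L_mf}, \eqref{eq:LH_mf}, \eqref{eq:LI_mf} and the mean-field a priori estimates of Lemmas~\ref{lem:apriori_mf} and \ref{lem:apriori2_mf}. Concretely, for each fixed $i,\ell\in\{1,\dots ,n\}$ I would integrate \eqref{velVerlet} to express $\partial_{v^i}q_T^\ell(x,v)-\partial_{v^i}\tilde q_T^\ell(x,v)$ as $\rn{1}^{i,\ell}+\rn{2}^{i,\ell}+\rn{3}^{i,\ell}+\rn{4}^{i,\ell}$, where $\rn{1}$ collects the terms of the form $\mathbf{H}_{\ell m}(\tilde q)(\partial_{v^i}\tilde q^m-\partial_{v^i}q^m)$ that will be absorbed into the left-hand side via the smallness hypothesis $(L+4\epsilon\tilde L)(T^2+Th)\le 1/6$, $\rn{2}$ collects terms with $(\mathbf{H}_{\ell m}(\tilde q)-\mathbf{H}_{\ell m}(q))\partial_{v^i}q^m$, $\rn{3}$ is the midpoint/lower-corner shift produced by the piecewise-constant sampling in \eqref{velVerlet}, and $\rn{4}$ is the trapezoidal rule error for $\int_0^T(T-s)\,\mathbf{H}_{\ell\cdot}(q_s)\partial_{v^i}q_s^\cdot\,ds$.

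The key book-keeping step — and where the mean-field structure really matters — is the bound on $\rn{4}^{i,\ell}$, summed over both $i$ and $\ell$, via Lemma~\ref{trapz} applied to $f(s)=-\sum_m\mathbf{H}_{\ell m}(q_s)\partial_{v^i}q_s^m$. Differentiating once in $s$ produces $-\sum_{m,r}(\partial^3 U/\partial x^\ell\partial x^m\partial x^r)(q_s)v_s^r\partial_{v^i}q_s^m-\sum_m\mathbf{H}_{\ell m}(q_s)\partial_{v^i}v_s^m$, and differentiating twice produces an additional $\sum_{m,r,t}(\partial^4 U/\partial x^\ell\partial x^m\partial x^r\partial x^t)v_s^rv_s^t\,\partial_{v^i}q_s^m$ together with lower-order cross-terms. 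For each of these, exactly the structure in \eqref{eq:LH_mf}, \eqref{eq:LI_mf} (most mixed partials vanish) lets one collapse the triple/quadruple sum down to a single $\ell$-sum that brings out the effective constants $(L_H+8\epsilon\tilde L_H)$ and $(L_I+14\epsilon\tilde L_I)$, while the inner $i$-sum on $\partial_{v^i}q_s^m$ and $\partial_{v^i}v_s^m$ gets controlled by \eqref{apriori:5b_mf} and \eqref{apriori:6_mf}. The resulting $B_1,B_2$ are then expressible through the position/velocity norm sums using \eqref{max:q_mf}--\eqref{max:v2_mf}.

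The remaining terms are more routine: for $\rn{1}^{i,\ell}$, summing over $i,\ell$ and using \eqref{eq:L_mf} gives a factor $(L+4\epsilon\tilde L)T^2$ in front of $\sum_{i,\ell}\max_s\|\partial_{v^i}q_s^\ell-\partial_{v^i}\tilde q_s^\ell\|$, which together with $(L+4\epsilon\tilde L)T^2\le 1/6$ can be moved to the left-hand side. For $\rn{2}^{i,\ell}$, the Lipschitz bound \eqref{eq:dH_ij_mf} together with $\sum_\ell\max_s\|\partial_{v^i}q_s^\ell\|\le(6/5)T$ from \eqref{apriori:5_mf} produces the factor $(6/5)(L_H+8\epsilon\tilde L_H)T^3$ times the position discretization error $\sum_\ell\max_s\|q_s^\ell-\tilde q_s^\ell\|$, which is then controlled by Lemma~\ref{lem:verlet_error:1_mf}. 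For $\rn{3}^{i,\ell}$, one uses $\ub{s}-\lb{s}\le h$ together with $\dot q^m_r=v^m_r$ and $\partial_{v^i}\dot q^m_r=\partial_{v^i}v^m_r$, and then invokes \eqref{max:v_mf}, \eqref{apriori:5_mf}, \eqref{apriori:6_mf}.

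The main obstacle is the combinatorial bookkeeping in the $\rn{4}$ term: one has to be careful to exploit the sparsity in \eqref{eq:LH_mf}--\eqref{eq:LI_mf} (only diagonal and single-off-diagonal indices contribute, with $1/n$ weights) rather than naively bounding each factor by its operator norm, because a naive bound loses the dimension-free scaling $(L_H+8\epsilon\tilde L_H)$ and $(L_I+14\epsilon\tilde L_I)$ that is essential for the improved mean-field dependence in Theorem~\ref{thm:tv_IM_accuracy_mf}. Once the four terms are bounded, adding them, using $(L+4\epsilon\tilde L)T^2\le 1/6$ to move the $\rn{1}$ contribution to the left-hand side, and simplifying the numerical prefactors yields \eqref{verlet_error:2_mf}.
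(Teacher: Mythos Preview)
Your proposal is correct and follows essentially the same four-term decomposition and bounding strategy as the paper's proof. One small correction: for the $\rn{2}$ term you need $\sum_i\max_s\mnorm{\partial_{v^i}q_s^m}\le (7/5)T$ from \eqref{apriori:5b_mf} (summing over the differentiation index for a fixed output component $m$), not \eqref{apriori:5_mf}, in order to cleanly pair the $i$-sum with the Hessian-difference $\ell$-sum and recover the prefactor $(7/5)(L_H+8\epsilon\tilde L_H)T^3$ that the paper obtains.
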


\begin{proof}

Using \eqref{velVerlet}, write the difference as
 \begin{align}
\partial_{v^i} q_T^{\ell} & (x,v)  - \partial_{v^i} \tilde q_T^{\ell} (x,v) \ = \ \rn{1}^{\ell}_i + \rn{2}^{\ell}_i + \rn{3}^{\ell}_i + \rn{4}^{\ell}_i \quad \text{where} \label{diff:D2q_D2tq_mf} \\
 \rn{1}^{\ell}_i \ &:= \ \frac{1}{2} \int_0^T (T-s)  \sum_{m=1}^n \mathbf{H}_{\ell m}(\tilde q_{\lb{s}} ) (\partial_{v^i} \tilde q_{\lb{s}}^m - \partial_{v^i} q_{\lb{s}}^m)   ds \nonumber  \\
& \quad + \frac{1}{2} \int_0^T (T-s) \sum_{m=1}^n \mathbf{H}_{\ell m}(\tilde q_{\ub{s}}) ( \partial_{v^i} \tilde q_{\ub{s}}^m - \partial_{v^i}  q_{\ub{s}}^m)  ds  \nonumber \\
& \quad - \frac{1}{2} \int_0^T ( s - \lb{s}) \sum_{m=1}^n \mathbf{H}_{\ell m}(\tilde q_{\ub{s}}) ( \partial_{v^i} \tilde q_{\ub{s}}^m - \partial_{v^i}  q_{\ub{s}}^m ) ds \nonumber \\
& \quad + \frac{1}{2} \int_0^T ( s - \lb{s}) \sum_{m=1}^n \mathbf{H}_{\ell m}(\tilde q_{\lb{s}}) (\partial_{v^i} \tilde q_{\lb{s}}^m - \partial_{v^i}  q_{\lb{s}}^m)   ds \nonumber \\
\rn{2}^{\ell}_i \ &:= \ \frac{1}{2} \int_0^T (T-s) \sum_{m=1}^n ( \mathbf{H}_{\ell m}(\tilde q_{\lb{s}} ) - \mathbf{H}_{\ell m}(q_{\lb{s}} ) ) \partial_{v^i}  q_{\lb{s}}^m  ds \nonumber  \\
& \quad + \frac{1}{2} \int_0^T (T-s) \sum_{m=1}^n ( \mathbf{H}_{\ell m}(\tilde q_{\ub{s}}) - \mathbf{H}_{\ell m}(q_{\ub{s}}) ) \partial_{v^i}  q_{\ub{s}}^m  ds  \nonumber \\
& \quad - \frac{1}{2} \int_0^T ( s - \lb{s}) \sum_{m=1}^n (\mathbf{H}_{\ell m}(\tilde q_{\ub{s}}) - \mathbf{H}_{\ell m}(q_{\ub{s}}) ) \partial_{v^i}  q_{\ub{s}}^m ds \nonumber \\
& \quad + \frac{1}{2} \int_0^T ( s - \lb{s}) \sum_{m=1}^n (\mathbf{H}_{\ell m}(\tilde q_{\lb{s}}) - \mathbf{H}_{\ell m}(q_{\lb{s}}) )  \partial_{v^i} q_{\lb{s}}^m ds \nonumber \\
\rn{3}^{\ell}_i \ &:= \ 
  -\frac{1}{2} \int_0^T ( s - \lb{s}) \sum_{m=1}^n [ (\mathbf{H}_{\ell m}(q_{\ub{s}}) - \mathbf{H}_{\ell m}(q_{\lb{s}})) \partial_{v^i} q_{\ub{s}}^m] \nonumber \\
 & \quad - \frac{1}{2} \int_0^T ( s - \lb{s}) \sum_{m=1}^n [ \mathbf{H}_{\ell m}(q_{\lb{s}}) (\partial_{v^i} q_{\ub{s}}^m - \partial_{v^i} q_{\lb{s}}^m ) ] ds 
  \;,  \quad \text{and} \nonumber \\
\rn{4}^{\ell}_i \ &:= \ - \int_0^T (T-s) \sum_{m=1}^n \mathbf{H}_{\ell m}(q_s) \partial_{v^i} q_s^m ds \nonumber \\ 
& \quad + \frac{1}{2} \int_0^T (T-s) \sum_{m=1}^n [ \mathbf{H}_{\ell m}(q_{\lb{s}} ) \partial_{v^i} q_{\lb{s}}^m + \mathbf{H}_{\ell m}(q_{\ub{s}}) \partial_{v^i} q_{\ub{s}}^m ] ds   \;. \nonumber
\end{align} 
To obtain the following bounds, 
we use \eqref{eq:L_mf} and  $(L + 4 \epsilon \tilde{L}) T^2 \le 1/6$ for \eqref{D2qerr:1_mf};
  \eqref{eq:dH_ij_mf} and \eqref{apriori:5b_mf} for \eqref{D2qerr:2_mf}; and \eqref{eq:L_mf}, \eqref{eq:dH_ij_mf}, \eqref{apriori:5b_mf}, and for \eqref{D2qerr:3_mf}. 
\begin{align} \label{D2qerr:1_mf}
& \sum_{i=1}^n \sum_{\ell=1}^n   \mnorm{\rn{1}^{\ell}_i} \ \le \ (1/6)  \, \max_{s \le T} \sum_{i=1}^n \sum_{\ell=1}^n \mnorm{\partial_{v^i} q_s^{\ell} - \partial_{v^i} \tilde q_s^{\ell}} \;, \\
& \sum_{i=1}^n \sum_{\ell=1}^n   \mnorm{\rn{2}^{\ell}_i} \ \le \  (7/5) (L_H + 8 \epsilon \tilde{L}_H) T^3 \sum_{\ell=1}^m 
  \max_{s \le T} \norm{q_s^{\ell} - \tilde q_s^{\ell}}  \label{D2qerr:2_mf} \;, \\
 & \sum_{i=1}^n \sum_{\ell=1}^n   \mnorm{\rn{3}^{\ell}_i} 
  \ \le \  (7/5) \frac{h (L_H+8 \epsilon \tilde{L}_H) T}{2}    \sum_{\ell=1}^n \int_0^T  \norm{ \int_{\lb{s}}^{\ub{s}} v_r^{\ell} dr} ds \nonumber \\
  & \qquad \qquad \qquad  + \frac{h (L+4 \epsilon \tilde{L}) }{2}  \sum_{i=1}^n \sum_{\ell=1}^n \int_0^T \mnorm{ \int_{\lb{s}}^{\ub{s}} \partial_{v^i} v_r^{\ell} dr } ds \nonumber \\
& \quad \ \le \ 
  \frac{7 h^2  (L_H+8 \epsilon \tilde{L}_H) T^2}{10}   \sum_{\ell=1}^n \max_{s \le T} \norm{v_s^{\ell}}
 + \frac{h^2 (L+4 \epsilon \tilde{L}) T}{2}  \sum_{i=1}^n \sum_{\ell=1}^n \max_{s \le T} \mnorm{  \partial_{v^i} v_s^{\ell}  }
 \label{D2qerr:3_mf} .
\end{align}
 Applying Lemma~\ref{trapz} with 
$f(s) = \mathbf{H}_{\ell m}(q_s) \partial_{v^i} q_s^m$ and using $(L+4 \epsilon \tilde{L}) T^2 \le 1/6$, \eqref{eq:L_mf}, \eqref{eq:LH_mf}, \eqref{eq:LI_mf}, \eqref{apriori:5b_mf} and \eqref{apriori:6_mf}, we obtain 
  \begin{align}
&   \sum_{i=1}^n \sum_{\ell=1}^n 
  \mnorm{\rn{4}^{\ell}_i} \nonumber \\
   \begin{split}
& \quad  \ \le \ \frac{h^2}{12} \Big( \frac{43}{30} (L+4 \epsilon \tilde{L}) T n 
+  4 (L_H + 8 \epsilon \tilde{L}_H) T^2 \sum_{\ell=1}^n \max_{s \le T}\norm{v_s^{\ell}}     \label{D2qerr:4_mf} \\
&  \qquad   + \frac{7}{30} (L_H + 8 \epsilon \tilde{L}_H) T \sum_{\ell=1}^n \max_{s \le T}\norm{q_s^{\ell}}  +  \frac{7}{5} (L_I + 14 \epsilon \tilde{L}_I) T^3  \sum_{\ell=1}^n \max_{s \le T} \norm{v_s^{\ell}}^2
  \Big)  \;.
    \end{split}
\end{align}
Insert \eqref{D2qerr:1_mf}, \eqref{D2qerr:2_mf}, \eqref{D2qerr:3_mf}, 
\eqref{D2qerr:4_mf} and
Lemma~\ref{lem:verlet_error:1_mf}
into norm of the double sum of \eqref{diff:D2q_D2tq_mf} over $i$ and $\ell$; use \eqref{max:q_mf}, \eqref{max:v_mf}, \eqref{max:v2_mf}, and \eqref{apriori:6_mf}; and simplify to obtain \eqref{verlet_error:2_mf}.
\end{proof}

\subsection{One-shot coupling bounds for $\tilde q_T(x,\xi) = \tilde q_T(y, \Phi(\xi))$}

The following lemmas are the mean-field analogs of Lemmas~\ref{lem:oneshot:1:a} and~\ref{lem:oneshot:1:b}. 

\begin{lem}\label{lem:oneshot:1:a_mf}
For any $x,y,v \in \mathbb{R}^{n k}$ such that $\tilde q_T(x,v) = \tilde q_T(y,\Phi(v))$, we have \begin{equation} \label{Phiv_mf}
T \norm{\Phi(v)-v} \ \le \ T \sum_{\ell =1}^n \norm{\Phi^{\ell}(v)-v^{\ell}} \ \le  \ (3/2) \ \sum_{\ell =1}^n \norm{x^{\ell} - y^{\ell} }.
\end{equation}
\end{lem}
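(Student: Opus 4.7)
The plan is to follow the strategy of the proof of Lemma~\ref{lem:oneshot:1:a}, but componentwise and using the mean-field a priori bounds. Set $u=\Phi (v)$, so by definition $\tilde q_T(x,v)=\tilde q_T(y,u)$. Integrating the Verlet equations \eqref{velVerlet} in each block $\ell$ yields, for $w\in\{v,u\}$ and initial condition $(z,w)\in\{(x,v),(y,u)\}$,
\[
\tilde q_T^\ell(z,w) \,=\, z^\ell + T\,w^\ell -\tfrac{1}{2}\int_0^T (T-s)\bigl[\nabla_\ell U(\tilde q_{\lb s}(z,w)) + \nabla_\ell U(\tilde q_{\ub s}(z,w))\bigr]\,ds + R^\ell(z,w),
\]
where $R^\ell(z,w)$ is the $O(s-\lb s)$ correction in \eqref{velVerlet}. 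Subtracting the two expressions and using $\tilde q_T^\ell(x,v)=\tilde q_T^\ell(y,u)$ gives
\[
T(u^\ell-v^\ell) \,=\, (x^\ell-y^\ell) \,+\, E^\ell,
\]
where $E^\ell$ is a linear combination of integrals of $\nabla_\ell U(\tilde q_s(x,v))-\nabla_\ell U(\tilde q_s(y,u))$ over $s\in[0,T]$.

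Next I would take norms and sum over $\ell$. The key step is the componentwise Lipschitz bound \eqref{eq:dG_i_mf}: after summing,
\[
\sum_{\ell=1}^{n}\bigl\|\nabla_\ell U(q)-\nabla_\ell U(\tilde q)\bigr\| \,\le\, \Bigl(L+2\epsilon\tilde L+\tfrac{2\epsilon\tilde L(n-1)}{n}\Bigr)\sum_{\ell=1}^{n}\|q^\ell-\tilde q^\ell\| \,\le\, (L+4\epsilon\tilde L)\sum_{\ell=1}^{n}\|q^\ell-\tilde q^\ell\|.
\]
This crucial cancellation keeps the effective Lipschitz constant $L+4\epsilon\tilde L$ independent of the particle number $n$. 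Combining this with a crude $L^1$-type estimate on the integral kernels gives
\[
T\sum_{\ell=1}^{n}\|u^\ell-v^\ell\| \,\le\, \sum_{\ell=1}^{n}\|x^\ell-y^\ell\| \,+\, (L+4\epsilon\tilde L)T^2\,\sum_{\ell=1}^{n}\max_{s\le T}\|\tilde q_s^\ell(x,v)-\tilde q_s^\ell(y,u)\|.
\]

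Then I would invoke the mean-field a priori bound \eqref{apriori:3_mf}, together with the trivial inequality $\max(\|a\|,\|a+b\|)\le\|a\|+\|b\|$, to estimate the right-hand side by
\[
\bigl(1+(L+4\epsilon\tilde L)(T^2+Th)\bigr)\Bigl(\sum_{\ell=1}^{n}\|x^\ell-y^\ell\| + T\sum_{\ell=1}^{n}\|u^\ell-v^\ell\|\Bigr).
\]
Using the standing hypothesis $(L+4\epsilon\tilde L)(T^2+Th)\le 1/6$, the prefactor $(L+4\epsilon\tilde L)T^2\bigl(1+(L+4\epsilon\tilde L)(T^2+Th)\bigr)$ is bounded by $\tfrac{1}{6}\cdot\tfrac{7}{6}=\tfrac{7}{36}$. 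Rearranging yields
\[
T\sum_{\ell=1}^{n}\|u^\ell-v^\ell\| \,\le\, \tfrac{1+7/36}{1-7/36}\sum_{\ell=1}^{n}\|x^\ell-y^\ell\| \,=\, \tfrac{43}{29}\sum_{\ell=1}^{n}\|x^\ell-y^\ell\| \,\le\, \tfrac{3}{2}\sum_{\ell=1}^{n}\|x^\ell-y^\ell\|.
\]
Finally, since $\|\Phi(v)-v\|=(\sum_\ell\|\Phi^\ell(v)-v^\ell\|^2)^{1/2}\le\sum_\ell\|\Phi^\ell(v)-v^\ell\|$, the first inequality in \eqref{Phiv_mf} is automatic, completing the proof. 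The only nontrivial step is the Lipschitz bookkeeping that produces the $n$-independent effective constant $L+4\epsilon\tilde L$; everything else is a verbatim componentwise adaptation of Lemma~\ref{lem:oneshot:1:a}.
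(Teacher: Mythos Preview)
Your proof is correct and follows essentially the same approach as the paper's own proof: both integrate the Verlet equations componentwise, subtract, sum over $\ell$ using the Lipschitz bound \eqref{eq:dG_i_mf} to extract the $n$-independent constant $L+4\epsilon\tilde L$, invoke the a~priori estimate \eqref{apriori:3_mf}, and finish with the same $\tfrac{1+7/36}{1-7/36}\le\tfrac{3}{2}$ arithmetic. Your write-up is in fact slightly more explicit about the summation step than the paper's.
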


\begin{proof}
Let $u=\Phi(v)$.
From  $\tilde q_T(x,v) = \tilde q_T(y,u)$, \begin{align*}
& T \norm{u^{\ell} - v^{\ell}} \ \le \ \norm{x^{\ell} - y^{\ell}} + (L+2 \epsilon \tilde{L}) T^2 \max_{s \le T} \norm{\tilde{q}_s^i(x,v) - \tilde{q}_s^i(y,u)} \\
& \qquad + \frac{2 \epsilon \tilde{L} T^2}{n}  \sum^n_{i =1, i \ne \ell} \norm{\tilde{q}_s^i(x,v) - \tilde{q}_s^i(y,u)}. 
\end{align*}
Summing over $\ell$ and using \eqref{apriori:3_mf} and $(L+4 \epsilon \tilde{L}) (T^2 + T h) \le 1/6$ gives  \begin{align*}
 T \sum_{\ell=1}^n \norm{u^{\ell} - v^{\ell}} \ &\le \ \sum_{\ell=1}^n \norm{x^{\ell} - y^{\ell}} + (L+4 \epsilon \tilde{L}) T^2 \sum_{\ell=1}^n \max_{s \le T} \norm{\tilde{q}_s^{\ell}(x,v) - \tilde{q}_s^{\ell}(y,u)} \\
 \ &\le \ \sum_{\ell=1}^n \norm{x^{\ell} - y^{\ell}} + (7/36)  \sum_{\ell=1}^n \left( \norm{x^{\ell} - y^{\ell}} + T \norm{u^{\ell} - v^{\ell}} \right) \le (3/2) \sum_{\ell=1}^n \norm{x^{\ell} - y^{\ell}}
\end{align*}
where in the last step we used $(1+7/36)/(1-7/36) < 3/2$.

\end{proof}

\begin{lem}\label{lem:oneshot:1:b_mf}
For any $x,y,v \in \mathbb{R}^{n k}$ such that $\tilde q_T(x,v) = \tilde q_T(y,\Phi(v))$, we have that
$\mnorm{D \Phi(v)- I_d} \ \le \ 1/2 \ $ and \begin{equation} \label{DPhiv_mf}
\mnorm{D \Phi(v)- I_d}_F \ \le \  \frac{49}{8} \, \sqrt{k} \, (L_H + 8 \epsilon \tilde{L}_H) \,  T^2 \, \sum_{\ell=1}^n \norm{x^{\ell} - y^{\ell}} \;.
\end{equation}
\end{lem}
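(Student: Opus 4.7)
The plan is to mirror the single-particle argument of Lemma~\ref{lem:oneshot:1:b}, but working blockwise in the particle indices so as to exploit the decay of off-diagonal Hessian blocks under the mean-field Assumption~\eqref{eq:L_mf}-\eqref{eq:dH_ij_mf}. First I would differentiate both sides of $\tilde q_T(x,v)=\tilde q_T(y,\Phi(v))$ with respect to $v$ in block form. Using the integral representation \eqref{dervelVerlet_mf} of $\partial_{v^j}\tilde q_T^\ell$ for both trajectories $\tilde q_T^{(1)}=\tilde q_T(x,v)$ and $\tilde q_T^{(2)}=\tilde q_T(y,\Phi(v))$, this produces, for each block $(\ell,j)$,
\[
T\bigl(D\Phi(v)-I_d\bigr)_{\ell j} \ =\ [I_1^{(1)}-I_1^{(2)}]_{\ell j} \ -\ \sum_{m=1}^n [I_1^{(2)}]_{\ell m}(D\Phi(v)-I_d)_{mj} ,
\]
where $[I_1^{(i)}]_{\ell j}$ is the obvious integral involving $\mathbf H_{\ell m}(\tilde q^{(i)})\,\partial_{v^j}\tilde q^{(i),m}$.

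Next I would obtain the rough bound $\mnorm{D\Phi(v)-I_d}\le 1/2$ exactly as in \eqref{DPhiv:1}: the operator norm of the full mean-field Hessian is at most $L+4\epsilon\tilde L$ by the row-sum estimate implied by \eqref{eq:L_mf}, and the bound $\max_s\mnorm{D_2\tilde q_s}\le (6/5)T$ carries over from \eqref{apriori:5_mf}, so the step-size constraint $(L+4\epsilon\tilde L)(T^2+Th)\le 1/6$ absorbs $\mnorm{D\Phi(v)-I_d}$ to the left. For the refined bound, I would then rewrite the key difference $\mathbf H_{\ell m}(\tilde q^{(1)})\partial_{v^j}\tilde q^{(1),m}-\mathbf H_{\ell m}(\tilde q^{(2)})\partial_{v^j}\tilde q^{(2),m}$ in the telescoped form
\[
\mathbf H_{\ell m}(\tilde q^{(2)})\bigl(\partial_{v^j}\tilde q^{(2),m}-\partial_{v^j}\tilde q^{(1),m}\bigr)-\bigl(\mathbf H_{\ell m}(\tilde q^{(1)})-\mathbf H_{\ell m}(\tilde q^{(2)})\bigr)\partial_{v^j}\tilde q^{(1),m},
\]
just as in Lemma~\ref{lem:oneshot:1:b}. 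Taking operator norms and summing over all $\ell,j$, the first term is absorbed to the LHS using the constraint, while the second is handled by \eqref{eq:dH_ij_mf}, which, after summing the Hessian-difference weights over all index combinations, yields the combined constant $L_H+8\epsilon\tilde L_H$, combined with \eqref{apriori:5b_mf} for $\sum_{\ell}\mnorm{\partial_{v^\ell}\tilde q_s^j}$. The mean-field a priori bound \eqref{apriori:7_mf} controls the difference-of-derivative-flows term, and \eqref{apriori:3_mf} together with Lemma~\ref{lem:oneshot:1:a_mf} (giving $T\sum_\ell\norm{\Phi^\ell(v)-v^\ell}\le (3/2)\sum_\ell\norm{x^\ell-y^\ell}$) bounds the trajectory separations. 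The net result should be an estimate of the form
\[
\sum_{\ell,j}\mnorm{(D\Phi(v)-I_d)_{\ell j}}\ \le\ \frac{21}{4}(L_H+8\epsilon\tilde L_H)\,T^2\sum_{\ell=1}^n\norm{x^\ell-y^\ell}.
\]

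Finally, to pass from block operator norms to the Frobenius norm of $D\Phi(v)-I_d$, I would use the elementary inequality $\mnorm{A_{ij}}_F\le\sqrt k\,\mnorm{A_{ij}}$ valid for any $k\times k$ block, together with Cauchy-Schwarz, to conclude $\mnorm{A}_F\le\sqrt k\sum_{i,j}\mnorm{A_{ij}}$. Applied with $A=D\Phi(v)-I_d$, this produces the $\sqrt k$ factor (rather than $\sqrt{nk}$) and a constant $21/4\le 49/8$, giving the stated bound. The main obstacle I anticipate is the bookkeeping at the summation step: one must handle both the ``diagonal'' term (using $(L+2\epsilon\tilde L)$, $(L_H+2\epsilon\tilde L_H)$ with $\sum_\ell\mnorm{\partial_{v^j}\tilde q^\ell}\le (6/5)T$) and the ``off-diagonal in particle index'' term (using the $2\epsilon\tilde L/n$ off-diagonal Hessian bound with $\sum_\ell\mnorm{\partial_{v^\ell}\tilde q^j}\le (7/5)T$) so that the cross-contributions with $\epsilon\tilde L_H$ correctly aggregate to the effective constant $L_H+8\epsilon\tilde L_H$, matching the mean-field a priori bound \eqref{apriori:7_mf}.
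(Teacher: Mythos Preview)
Your proposal is correct and follows essentially the same route as the paper: blockwise differentiation of $\tilde q_T(x,v)=\tilde q_T(y,\Phi(v))$, absorption of the $D\Phi-I$ term on the left via the step-size constraint, telescoping of the Hessian difference, and conversion to the Frobenius norm via $\mnorm{A}_F\le\sqrt{k}\sum_{i,\ell}\mnorm{A_{i\ell}}$. One arithmetic point to recheck: when you sum the Hessian-difference term over the derivative index $\ell$ for fixed component $j$, the relevant a priori bound is \eqref{apriori:5b_mf} with constant $7/5$, not $6/5$; tracing this through gives $49/8$ on the nose rather than $21/4$, so your intermediate constant is slightly optimistic, but the argument and the final claim are unaffected.
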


\begin{proof}
First, note that \[
\mnorm{D \Phi(v)- I_d}_F 
\, = \, \left( \sum_{\ell=1}^n \sum_{i=1}^{n}  \mnorm{\partial_{v^{\ell}} \Phi^i(v) - \delta_{i \ell} I_k}_F^2 \right)^{1/2} 
\, \le \,  \sqrt{k} \sum_{\ell=1}^n \sum_{i=1}^{n}  \mnorm{\partial_{v^{\ell}} \Phi^i(v) - \delta_{i \ell} I_k} .
\]  Next, and in turn, we upper bound $\mnorm{D \Phi(v)- I_d}$ and $\sum_{\ell=1}^n \sum_{i=1}^{n}  \mnorm{\partial_{v^{\ell}} \Phi^i(v) - \delta_{i \ell} I_k} $.   Introduce the shorthand $\tilde{q}_s^{(1)} : = \tilde{q}_s(x,v)$ and $\tilde{q}_s^{(2)} : = \tilde{q}_s(y,\Phi(v))$ for any $s \in [0,T]$.
Differentiating both sides of $\tilde{q}_T^{\ell}(x,v) = \tilde{q}_T^{\ell}(y,\Phi(v))$ with respect to $v^j$ yields \begin{equation} \label{eq:DPhimId_mf}
\begin{aligned}
&T (\partial_{v^{\ell}} \Phi^i(v) - \delta_{i \ell} I_k) = \\
& +  \int\displaylimits_0^T \frac{T - s}{2} \sum_{j=1}^n \left[ \mathbf{H}_{ij}(\tilde{q}_{\lb{s}}^{(2)})  \partial_{v^{\ell}} \tilde{q}_{\lb{s}}^{(2),j}   - 
 \mathbf{H}_{ij}(\tilde{q}_{\lb{s}}^{(1)}) \partial_{v^{\ell}} \tilde{q}_{\lb{s}}^{(1),j} \right] ds  \\
& + \int\displaylimits_0^T \frac{T - s}{2} \sum_{j=1}^n \left[ \mathbf{H}_{ij}(\tilde{q}_{\ub{s}}^{(2)}) \partial_{v^{\ell}} \tilde{q}_{\ub{s}}^{(2),j}  - 
 \mathbf{H}_{ij}(\tilde{q}_{\ub{s}}^{(1)})   \partial_{v^{\ell}} \tilde{q}_{\ub{s}}^{(1),j}
 \right] ds  \\
    & -  \int\displaylimits_0^T \frac{s - \lb{s}}{2} \sum_{j=1}^n \left[  \mathbf{H}_{ij}(\tilde{q}_{\ub{s}}^{(2)})  \partial_{v^{\ell}} \tilde{q}_{\ub{s}}^{(2),j}    -   \mathbf{H}_{ij}(\tilde{q}_{\ub{s}}^{(1)})   \partial_{v^{\ell}} \tilde{q}_{\ub{s}}^{(1),j} \right] ds \\
        & -  \int\displaylimits_0^T \frac{s - \lb{s}}{2} \sum_{j=1}^n \left[ \mathbf{H}_{ij}(\tilde{q}_{\lb{s}}^{(1)})  \partial_{v^{\ell}} \tilde{q}_{\lb{s}}^{(1),j}  -  \mathbf{H}_{ij}(\tilde{q}_{\lb{s}}^{(2)}) \partial_{v^{\ell}} \tilde{q}_{\lb{s}}^{(2),j}    \right] ds \\
& + \int\displaylimits_0^T \frac{T - s}{2} \sum_{j,m}  \left[\mathbf{H}_{ij}(\tilde{q}_{\lb{s}}^{(2)}) \partial_{v^{m}} \tilde{q}_{\lb{s}}^{(2),j} + \mathbf{H}_{ij}(\tilde{q}_{\ub{s}}^{(2)}) \partial_{v^{m}} \tilde{q}_{\ub{s}}^{(2),j} \right] (\partial_{v^{\ell}} \Phi^m(v) - \delta_{m \ell} I_k) ds  \\
 & -  \int\displaylimits_0^T \frac{s - \lb{s}}{2} \sum_{j,m}  \left[\mathbf{H}_{ij}(\tilde{q}_{\ub{s}}^{(2)}) \partial_{v^{m}} \tilde{q}_{\ub{s}}^{(2),j} - \mathbf{H}_{ij}(\tilde{q}_{\lb{s}}^{(2)}) \partial_{v^{m}} \tilde{q}_{\lb{s}}^{(2),j} \right] (\partial_{v^{\ell}} \Phi^m(v) - \delta_{m \ell} I_k) ds.
 \end{aligned}
 \end{equation}
 By using \eqref{eq:L_mf}, \eqref{apriori:5_mf},  and $(L + 4 \epsilon \tilde{L}) T^2 \le 1/6$, for fixed $\ell \in \{1, \dots, n\}$ note that \[ 
  \sum_{i=1}^n  \mnorm{\partial_{v^{\ell}} \Phi^i(v) - \delta_{i \ell} I_k} \le \frac{ (6/5) 2 (L + 4 \epsilon \tilde{L}) T^2}{ (1- (6/5) (L + 4 \epsilon \tilde{L})T^2)} = 1/2 \;,
\] and therefore, for any $z=(z^1, \dots, z^n) \in \mathbb{R}^{n k}$, we have \[
\norm{(D \Phi(v) - I_d) z}^2 
\le \sum_{\ell=1}^n \sum_{i=1}^n \norm{z^{\ell}}^2  \mnorm{\partial_{v^{\ell}} \Phi^i(v) - \delta_{i \ell}^2 I_k}^2 \le \frac{1}{4} \sum_{\ell=1}^n \norm{z^{\ell}}^2 .
\] Thus, $\mnorm{D \Phi(v) - I_d} \le 1/2$.
We can also rewrite \eqref{eq:DPhimId_mf} as 
\begin{equation*} 
\begin{aligned}
&T (\partial_{v^{\ell}} \Phi^i(v) - \delta_{i \ell} I_k) = \\
& +  \int\displaylimits_0^T \frac{T - s}{2} \sum_{j=1}^n \left[ \mathbf{H}_{ij}(\tilde{q}_{\lb{s}}^{(2)}) ( \partial_{v^{\ell}} \tilde{q}_{\lb{s}}^{(2),j} - \partial_{v^{\ell}} \tilde{q}_{\lb{s}}^{(1),j})   - 
( \mathbf{H}_{ij}(\tilde{q}_{\lb{s}}^{(1)})- \mathbf{H}_{ij}(\tilde{q}_{\lb{s}}^{(2)})) \partial_{v^{\ell}} \tilde{q}_{\lb{s}}^{(1),j} \right] ds  \\
& + \int\displaylimits_0^T \frac{T - s}{2} \sum_{j=1}^n \left[ \mathbf{H}_{ij}(\tilde{q}_{\ub{s}}^{(2)}) (\partial_{v^{\ell}} \tilde{q}_{\ub{s}}^{(2),j} - \partial_{v^{\ell}} \tilde{q}_{\ub{s}}^{(1),j}) - 
( \mathbf{H}_{ij}(\tilde{q}_{\ub{s}}^{(1)})    - \mathbf{H}_{ij}(\tilde{q}_{\ub{s}}^{(2)})) \partial_{v^{\ell}} \tilde{q}_{\ub{s}}^{(1),j}
 \right] ds  \\
    & -  \int\displaylimits_0^T \frac{s - \lb{s}}{2} \sum_{j=1}^n \left[  \mathbf{H}_{ij}(\tilde{q}_{\ub{s}}^{(2)})  (\partial_{v^{\ell}} \tilde{q}_{\ub{s}}^{(2),j} - \partial_{v^{\ell}} \tilde{q}_{\ub{s}}^{(1),j} )     -   (\mathbf{H}_{ij}(\tilde{q}_{\ub{s}}^{(1)})  - \mathbf{H}_{ij}(\tilde{q}_{\ub{s}}^{(2)}))  \partial_{v^{\ell}} \tilde{q}_{\ub{s}}^{(1),j} \right] ds \\
        & +  \int\displaylimits_0^T \frac{s - \lb{s}}{2} \sum_{j=1}^n \left[ \mathbf{H}_{ij}(\tilde{q}_{\lb{s}}^{(2)})  ( \partial_{v^{\ell}} \tilde{q}_{\lb{s}}^{(2),j} - \partial_{v^{\ell}} \tilde{q}_{\lb{s}}^{(1),j}   )  -  (\mathbf{H}_{ij}(\tilde{q}_{\lb{s}}^{(1)}) - \mathbf{H}_{ij}(\tilde{q}_{\lb{s}}^{(2)}))  \partial_{v^{\ell}} \tilde{q}_{\lb{s}}^{(1),j}    \right] ds \\
& + \int\displaylimits_0^T \frac{T - s}{2} \sum_{j,m}  \left[\mathbf{H}_{ij}(\tilde{q}_{\lb{s}}^{(2)}) \partial_{v^{m}} \tilde{q}_{\lb{s}}^{(2),j} + \mathbf{H}_{ij}(\tilde{q}_{\ub{s}}^{(2)}) \partial_{v^{m}} \tilde{q}_{\ub{s}}^{(2),j} \right] (\partial_{v^{\ell}} \Phi^m(v) - \delta_{m \ell} I_k) ds  \\
 & -  \int\displaylimits_0^T \frac{s - \lb{s}}{2} \sum_{j,m}  \left[\mathbf{H}_{ij}(\tilde{q}_{\ub{s}}^{(2)}) \partial_{v^{m}} \tilde{q}_{\ub{s}}^{(2),j} - \mathbf{H}_{ij}(\tilde{q}_{\lb{s}}^{(2)}) \partial_{v^{m}} \tilde{q}_{\lb{s}}^{(2),j} \right] (\partial_{v^{\ell}} \Phi^m(v) - \delta_{m \ell} I_k) ds.
 \end{aligned}
 \end{equation*}
By using \eqref{eq:L_mf}, \eqref{eq:dH_ij_mf},  \eqref{apriori:5_mf}, \eqref{apriori:5b_mf}, \eqref{apriori:7_mf}, and $(L + 4 \epsilon \tilde{L})  T^2 \le 1/6$, we get \begin{align*}
  & (4/5) T \sum_{i,\ell} \mnorm{\partial_{v^{\ell}} \Phi^i(v) - \delta_{i \ell} I_k}   
    \le  (L + 4 \epsilon \tilde{L}) T^2 \sum_{j,\ell} \max_{s \le T}  \mnorm{\partial_{v^{\ell}} \tilde{q}_{s}^{(2),j} - \partial_{v^{\ell}} \tilde{q}_{s}^{(1),j} }  \, \\
    & \qquad\qquad + (7/5) T (L_H + 8 \epsilon \tilde{L}_H) T^2 \sum_{\ell=1}^n \max_{s \le T} \norm{\tilde{q}_{s}^{(1),\ell} - \tilde{q}_{s}^{(2),\ell} } \\
    & \qquad\le (49/25)   (L_H + 8 \epsilon \tilde{L}_H) T^3 \sum_{\ell=1}^n ( \norm{ x^{\ell} - y^{\ell} } + T \norm{\Phi^{\ell}(v) - v^{\ell}}  ) \\
    &\qquad \le  (49/10) (L_H + 8 \epsilon \tilde{L}_H) T^3 \sum_{\ell=1}^n \norm{ x^{\ell} - y^{\ell} }
    \end{align*}
where in the last step we inserted \eqref{Phiv_mf}.  Simplifying gives \eqref{DPhiv_mf}.  
\end{proof}

\subsection{One-shot coupling bounds for $\tilde q_T(x,\xi) = q_T(x, \Phi(\xi))$}

The following lemmas are the mean-field analogs of Lemmas~\ref{lem:oneshot:2:a} and~\ref{lem:oneshot:2:b}. 

\begin{lem}\label{lem:oneshot:2:a_mf}
For any $x,v \in \mathbb{R}^{n k}$ such that $\tilde q_T(x,v) = q_T(x,\Phi(v))$, we have
\begin{equation}
\label{Phiv2_mf}
\begin{aligned} 
& T \norm{\Phi(v)-v}   \, \le \, T \sum_{\ell=1}^n \norm{\Phi^{\ell}(v) - v^{\ell}} \ \le \ \frac{72}{65}   h^2 \Big(    (L+4 \epsilon \tilde{L}) \sum_{\ell=1}^n \norm{x^{\ell}}  
\\
& \quad + (L+4 \epsilon \tilde{L}) T \sum_{\ell=1}^n 
  \norm{v^{\ell}}
 +  (L_H +8 \epsilon \tilde{L}_H)  \sum_{\ell=1}^n \norm{x^{\ell}}^2 
 +  (L_H +8 \epsilon \tilde{L}_H) T \sum_{\ell=1}^n \norm{v^{\ell}}^2  \Big) .
\end{aligned}
\end{equation}
\end{lem}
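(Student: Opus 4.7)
The plan is to proceed by direct analogy with the proof of Lemma~\ref{lem:oneshot:2:a}, but carrying the componentwise bookkeeping consistently so that the mean-field Lipschitz constant $L+4\epsilon\tilde L$ and the $\ell_1$-type sums $\sum_{\ell=1}^n \|\cdot\|$ appear everywhere. Introduce the shorthands $\tilde q_T = \tilde q_T(x,v)$, $q_T^{(1)} = q_T(x,v)$, and $q_T^{(2)} = q_T(x,\Phi(v))$. By definition of $\Phi$ we have $q_T^{(2),\ell} - q_T^{(1),\ell} = \tilde q_T^\ell - q_T^{(1),\ell}$ for each $\ell$, and integrating the exact Hamiltonian flow \eqref{eq:Hamflow} against $(T-s)$ gives
\[
T\bigl(\Phi^\ell(v)-v^\ell\bigr) \;=\; \bigl(\tilde q_T^\ell - q_T^{(1),\ell}\bigr) \;+\; \int_0^T (T-s)\bigl[\nabla_\ell U(q_s^{(2)}) - \nabla_\ell U(q_s^{(1)})\bigr]\,ds.
\]

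Next I would take norms, sum over $\ell\in\{1,\dots,n\}$, and apply the mean-field Lipschitz bound \eqref{eq:dG_i_mf} to get
\[
T\sum_{\ell=1}^n \norm{\Phi^\ell(v)-v^\ell} \;\le\; \sum_{\ell=1}^n \norm{\tilde q_T^\ell - q_T^{(1),\ell}} \;+\; \tfrac{(L+4\epsilon\tilde L)T^2}{2}\sum_{\ell=1}^n \max_{s\le T}\norm{q_s^{(2),\ell}-q_s^{(1),\ell}}.
\]
Now the crucial input is the mean-field a priori bound \eqref{apriori:3_mf} with $h=0$, which controls the componentwise trajectory difference between two exact flows started at $(x,v)$ and $(x,\Phi(v))$ by the initial velocity gap: this yields
\[
\sum_{\ell=1}^n \max_{s\le T}\norm{q_s^{(2),\ell}-q_s^{(1),\ell}} \;\le\; \bigl(1+(L+4\epsilon\tilde L)T^2\bigr)\, T\sum_{\ell=1}^n \norm{\Phi^\ell(v)-v^\ell}.
\]

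Plugging this into the previous display and using the step size constraint $(L+4\epsilon\tilde L)T^2\le 1/6$, the self-referential term on the right carries a coefficient of at most $(1/12)(7/6) = 7/72$, so solving for $T\sum_\ell\|\Phi^\ell-v^\ell\|$ absorbs it with multiplicative factor $1/(1-7/72) = 72/65$. This produces the intermediate bound
\[
T\sum_{\ell=1}^n \norm{\Phi^\ell(v)-v^\ell} \;\le\; \tfrac{72}{65}\sum_{\ell=1}^n \max_{s\le T}\norm{\tilde q_s^\ell(x,v)-q_s^\ell(x,v)},
\]
and inserting the mean-field Verlet discretization error Lemma~\ref{lem:verlet_error:1_mf} yields \eqref{Phiv2_mf}. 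The inequality $\norm{\Phi(v)-v}\le \sum_\ell \norm{\Phi^\ell(v)-v^\ell}$ gives the first claim of the lemma for free.

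The proof is mostly mechanical; the only point requiring care is the bookkeeping in step two, where one must apply \eqref{eq:dG_i_mf} componentwise so that the off-diagonal interaction terms $\tfrac{2\epsilon\tilde L}{n}\sum_{m\ne\ell}$ redistribute correctly under the sum over $\ell$ and collapse into the effective constant $L+4\epsilon\tilde L$ (the factor $4$ rather than $2$ accounts for each interaction being counted once from each particle). This is exactly the same telescoping that produces $L+4\epsilon\tilde L$ in \eqref{apriori:1_mf}--\eqref{apriori:3_mf} of Lemma~\ref{lem:apriori_mf}, so I expect no new difficulty beyond matching those constants.
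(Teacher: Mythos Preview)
Your proposal is correct and follows essentially the same route as the paper's own proof: derive the componentwise integral identity for $T(\Phi^\ell(v)-v^\ell)$, sum and apply \eqref{eq:dG_i_mf} to get the effective constant $L+4\epsilon\tilde L$, use \eqref{apriori:3_mf} with $h=0$ to produce the self-referential term with coefficient $7/72$, absorb it to obtain the factor $72/65$, and finish with Lemma~\ref{lem:verlet_error:1_mf}. The only cosmetic difference is that your intermediate bound is stated with $\max_{s\le T}$ on the Verlet error whereas the paper uses the value at time $T$; since \eqref{verlet_error:1_mf} already controls the supremum, this makes no difference.
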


\begin{proof}
Introduce the shorthand $\tilde q_T = \tilde q_T(x,v)$, $q_T^{(1)} = q_T(x,v)$ and $q_T^{(2)} = q_T(x,\Phi(v))$.
Noting that $ \norm{\Phi(v) - v} \le  \sum_{\ell=1}^n \norm{\Phi^{\ell}(v) - v^{\ell}}$, and using  $\tilde q_T = q_T^{(2)}$  or $q_T^{(2)}  - q_T^{(1)} = \tilde q_T - q_T^{(1)}$ and then applying \eqref{eq:dG_i_mf}, we obtain \begin{align*}
 & T \sum_{\ell=1}^n \norm{\Phi^{\ell}(v) - v^{\ell}}  \le  \sum_{\ell=1}^n  \norm{ \int_0^T ( T - s) [ \nabla_{\ell} U(q_{s}^{(2)}) - \nabla_{\ell} U(q_{s}^{(1)}) ] ds + \tilde{q}^{\ell}_T - q_T^{(1),\ell} } \nonumber \\
 & \qquad \le 
(L+4 \epsilon \tilde{L})  \frac{T^2}{2}  \sum_{\ell=1}^n \max_{s \le T}  \norm{q_s^{(2),\ell} - q_s^{(1),\ell} }
 + \sum_{\ell=1}^n \norm{\tilde{q}^{\ell}_T - q_T^{(1),\ell} } \\
 & \qquad \le \frac{7}{72} T \sum_{\ell=1}^n \norm{\Phi^{\ell}(v) - v^{\ell}}
 + \sum_{\ell=1}^n \norm{\tilde{q}^{\ell}_T - q_T^{(1),\ell} } \le \frac{72}{65} \sum_{\ell=1}^n  \norm{q_T^{(1),\ell} - \tilde{q}^{\ell}_T }
\end{align*}
where in the next to last step we used \eqref{apriori:3_mf} and $(L + 4 \epsilon \tilde{L}) T^2 \le 1/6$.  Inserting \eqref{verlet_error:1_mf} into this last inequality gives \eqref{Phiv2_mf}.
\end{proof}

\begin{lem}\label{lem:oneshot:2:b_mf}
For any $x,v \in \mathbb{R}^{n k}$ such that $\tilde q_T(x,v) = q_T(x,\Phi(v))$, we have that $\mnorm{D \Phi(v)- I_d} \le 1/2$ and  \begin{equation} \label{DPhiv2_mf}
\begin{aligned} 
& T \mnorm{D \Phi(v)- I_d}_F \, \le \,  \sqrt{k} \sum_{\ell=1}^n \sum_{i=1}^{n}  \mnorm{\partial_{v^{\ell}} \Phi^i(v) - \delta_{i \ell} I_k}  \\
& \le \   \sqrt{k} h^2 \bigg( (L+4 \epsilon \tilde{L}) T n + (L_H + 8 \epsilon \tilde{L}_H) T \sum_{\ell=1}^n \norm{x^{\ell}}   + 3 (L_H + 8 \epsilon \tilde{L}_H) T^2 \sum_{\ell=1}^n 
  \norm{v^{\ell}} \\
& \qquad + \big( (L_I + 14 \epsilon \tilde{L}_I) T + 2 (L_H + 8 \epsilon \tilde{L}_H)^2 T^3 \big) \sum_{\ell=1}^n \norm{x^{\ell}}^2  \\
& \qquad +
\big( (L_I + 14 \epsilon \tilde{L}_I) T^3 + 3 (L_H + 8 \epsilon \tilde{L}_H)^2 T^5 \big)  \sum_{\ell=1}^n \norm{v^{\ell}}^2 \bigg) \;.
\end{aligned} 
\end{equation}
\end{lem}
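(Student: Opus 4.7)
The plan is to mirror the proof of Lemma~\ref{lem:oneshot:2:b}, carried out componentwise over the $n$ particles to exploit the mean-field structure of $U$. I would first introduce the shorthand $\tilde q_T := \tilde q_T(x,v)$, $q_T^{(1)} := q_T(x,v)$, and $q_T^{(2)} := q_T(x,\Phi(v))$, and use the defining relation $q_T^{(2)} - q_T^{(1)} = \tilde q_T - q_T^{(1)}$. Differentiating both sides with respect to $v^{\ell}$ and representing the Hamiltonian flow for $q_T^{(1)}$ and $q_T^{(2)}$ in integral form yields an identity of the shape $T(\partial_{v^{\ell}} \Phi^i(v) - \delta_{i\ell} I_k) = (\partial_{v^\ell}\tilde q_T^i - \partial_{v^\ell} q_T^{(1),i}) + \mathcal E^{\ell,i}_1 + \mathcal E^{\ell,i}_2 + \mathcal E^{\ell,i}_3$, where $\mathcal E^{\ell,i}_1$ collects Hessian-difference terms $(\mathbf H_{ij}(q^{(2)}_s) - \mathbf H_{ij}(q^{(1)}_s))\,\partial_{v^\ell} q^{(1),j}_s$, $\mathcal E^{\ell,i}_2$ collects $\mathbf H_{ij}(q^{(2)}_s)(\partial_{v^\ell} q^{(2),j}_s - \partial_{v^\ell} q^{(1),j}_s)$, and $\mathcal E^{\ell,i}_3$ is a diagonal term proportional to $(D\Phi(v) - I_d)$.

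Next, I would sum $\mnorm{\cdot}$ over $i$ and $\ell$ and estimate term by term: the leading summand $\partial_{v^\ell}\tilde q_T^i - \partial_{v^\ell}q_T^{(1),i}$ is controlled by Lemma~\ref{lem:verlet_error:2_mf}; $\mathcal E_1$ is bounded using \eqref{eq:dH_ij_mf} together with \eqref{apriori:5b_mf} and Lemma~\ref{lem:oneshot:2:a_mf}, after converting $\sum_\ell \norm{q^{(2),\ell}_s - q^{(1),\ell}_s}$ into $(7/6)\,T\,\sum_\ell \norm{\Phi^\ell(v) - v^\ell}$ via \eqref{apriori:3_mf}; $\mathcal E_2$ brings down a factor $(L+4\epsilon\tilde L)T^2\sum_{i,\ell}\max_{s\le T}\mnorm{\partial_{v^\ell}q^{(2),i}_s - \partial_{v^\ell}q^{(1),i}_s}$, which reduces by the same route to Lemmas~\ref{lem:oneshot:2:a_mf} and~\ref{lem:verlet_error:2_mf}; and $\mathcal E_3$ contributes $(L+4\epsilon\tilde L)T^2\sum_{i,\ell}\mnorm{\partial_{v^\ell}\Phi^i(v) - \delta_{i\ell}I_k}$, which the constraint \eqref{eq::Tconstraint_mf} lets me absorb into the left-hand side with a $6/5$ prefactor. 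The ancillary bound $\mnorm{D\Phi(v) - I_d}\le 1/2$ is obtained in parallel by the per-$\ell$ fixed-point argument used at the start of the proof of Lemma~\ref{lem:oneshot:1:b_mf}. Finally, to pass from $\sum_{i,\ell}\mnorm{\partial_{v^\ell}\Phi^i(v) - \delta_{i\ell}I_k}$ to $\mnorm{D\Phi(v) - I_d}_F$, I would use the elementary inequality $\mnorm{B}_F \le \sqrt k\,\mnorm{B}$ blockwise, which produces the announced $\sqrt k$ factor; the stated bound then follows by plugging in Lemma~\ref{lem:oneshot:2:a_mf} and Lemma~\ref{lem:verlet_error:2_mf} and collecting the $L$, $L_H$, $L_H^2$, $L_I$ contributions.

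The main obstacle will be the index bookkeeping. The two a priori bounds for the velocity derivative flow look superficially similar but differ in a way that matters: \eqref{apriori:5_mf} gives $\sum_i \mnorm{\partial_{v^j}\tilde q_s^i}\le (6/5)T$ (summing over the target index $i$ for fixed source $j$), while \eqref{apriori:5b_mf} gives $\sum_\ell \mnorm{\partial_{v^\ell}\tilde q_s^j}\le (7/5)T$ (summing over the source index $\ell$ for fixed target $j$). Combining these with the diagonal versus off-diagonal decay of $\mathbf H_{ij}$ encoded in \eqref{eq:L_mf} and \eqref{eq:dH_ij_mf} inside the double sum over $(i,\ell)$ has to be arranged carefully so that the off-diagonal entries of $\mathbf H$ (of size $O(1/n)$) combine with an $O(n)$ sum to produce a linear rather than quadratic dependence on $n$ and the correct $\sqrt k$ scaling, matching the coefficients in the claimed bound.
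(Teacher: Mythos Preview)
Your plan matches the paper's proof essentially step for step: the same four-term decomposition from differentiating $q_T^{(2),i}-q_T^{(1),i}=\tilde q_T^i-q_T^{(1),i}$, the same double sum over $(i,\ell)$, the same absorption of the $D\Phi-I$ term (the resulting prefactor is $10/9$, not $6/5$), and the same final insertion of Lemmas~\ref{lem:verlet_error:2_mf} and~\ref{lem:oneshot:2:a_mf}. One small correction: for your term $\mathcal E_2$, which involves $\partial_{v^\ell}q^{(2),i}_s-\partial_{v^\ell}q^{(1),i}_s$ (the difference between two \emph{exact} flows at the same initial position but different initial velocities), the relevant a~priori bound is \eqref{apriori:7_mf} with $h=0$, not Lemma~\ref{lem:verlet_error:2_mf}, which compares exact and discrete derivative flows at the \emph{same} initial data.
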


\begin{proof}
The proof that $\mnorm{D \Phi(v)- I_d} \le 1/2$ is similar to the proof in Lemma~\ref{lem:oneshot:1:b_mf} and therefore omitted.  Note that \[
\mnorm{D \Phi(v)- I_d}_F 
\, = \, \left( \sum_{\ell=1}^n \sum_{i=1}^{n}  \mnorm{\partial_{v^{\ell}} \Phi^i(v) - \delta_{i \ell} I_k}_F^2 \right)^{1/2} 
\, \le \,  \sqrt{k} \sum_{\ell=1}^n \sum_{i=1}^{n}  \mnorm{\partial_{v^{\ell}} \Phi^i(v) - \delta_{i \ell} I_k} .
\]  Next, we upper bound $\sum_{\ell=1}^n \sum_{i=1}^{n}  \mnorm{\partial_{v^{\ell}} \Phi^i(v) - \delta_{i \ell} I_k} $.    Introduce the shorthand $\tilde q_T = \tilde q_T(x,v)$, $q_T^{(1)} = q_T(x,v)$ and $q_T^{(2)} = q_T(x,\Phi(v))$. The derivative of $q_T^{(2),i}  - q_T^{(1),i} = \tilde q_T^i - q_T^{(1),i}$ with respect to $v^{\ell}$ yields \begin{align}
 & T \, (\partial_{v^{\ell}} \Phi^i(v) - \delta_{i \ell} I_k)  \ = \  
\partial_{v^{\ell}} \tilde q_T^i - \partial_{v^{\ell}} q_T^{(1),i} \nonumber \\
& \quad + \int_0^T ( T - s) \sum_{j=1}^n [ \mathbf{H}_{ij}(q_{s}^{(2)}) - \mathbf{H}_{ij} (q_{s}^{(1)}) ] \partial_{v^{\ell}} q_{s}^{(1),j} ds \nonumber \\ 
 & \quad + \int_0^T ( T - s) \sum_{j=1}^n \mathbf{H}_{ij}(q_{s}^{(2)}) [ \partial_{v^{\ell}} q_s^{(2),j} - \partial_{v^{\ell}} q_s^{(1),j} ] ds \nonumber \\
& \quad + \int_0^T ( T - s) \sum_{j=1}^n \sum_{m=1}^n \mathbf{H}_{ij}(q_{s}^{(2)}) \partial_{v^{m}} q_s^{(2),j} ( \partial_{v^{\ell}} \Phi^m(v) - \delta_{\ell m} I_k)]  ds . \nonumber 
\end{align}
By \eqref{eq:L_mf}, \eqref{eq:dH_ij_mf}, \eqref{apriori:5_mf}, \eqref{apriori:5b_mf} and $(L+4 \epsilon \tilde{L}) T^2 \le 1/6$,  \begin{align*}
 & T \, \sum_{i,\ell} \mnorm{\partial_{v^{\ell}} \Phi^i(v) - \delta_{i \ell} I_k} \\
 & \le
  \sum_{i,\ell}  \mnorm{\partial_{v^{\ell}} \tilde q_T^i - \partial_{v^{\ell}} q_T^{(1),i}  }    + \frac{7}{5} T (L_H + 8 \epsilon \tilde{L}_H) \frac{T^2}{2} \sum_{\ell} \max_{s \le T}  \norm{q_s^{(2),\ell} - q_s^{(1),\ell} }  \\
  & \quad +  (L+ 4 \epsilon \tilde{L}) \frac{T^2}{2} \sum_{i,\ell} \norm{\partial_{v^{\ell}} q_s^{(2),i} - \partial_{v^{\ell}} q_s^{(1),i} }  + \frac{6}{5}   T (L+ 4 \epsilon \tilde{L}) \frac{T^2}{2} \sum_{i,\ell} \mnorm{\partial_{v^{\ell}} \Phi^i(v) - \delta_{i \ell} I_k} \\
&\le \frac{10}{9} \sum_{i,\ell}  \mnorm{\partial_{v^{\ell}} \tilde q_T^i - \partial_{v^{\ell}} q_T^{(1),i}  }   + \frac{10}{9} \frac{7}{5} T (L_H + 8 \epsilon \tilde{L}_H) \frac{T^2}{2} \sum_{\ell} \max_{s \le T}  \norm{q_s^{(2),\ell} - q_s^{(1),\ell} }  \\
&\quad + \frac{10}{9} \frac{1}{12} \sum_{i,\ell} \mnorm{\partial_{v^{\ell}} q_s^{(2),i} - \partial_{v^{\ell}} q_s^{(1),i} } \\
&\le \frac{10}{9} \sum_{i,\ell}  \mnorm{\partial_{v^{\ell}} \tilde q_T^i - \partial_{v^{\ell}} q_T^{(1),i}  }   + \frac{49}{45} (L_H + 8 \epsilon \tilde{L}_H) T^3 \left( T \sum_{\ell=1}^n \norm{\Phi^{\ell}(v) - v^{\ell} } \right) 
 \end{align*}
 where in the last step we used
 \eqref{apriori:3_mf} and \eqref{apriori:7_mf}.  Inserting  \eqref{verlet_error:2_mf} and \eqref{Phiv2_mf} and simplifying gives \eqref{DPhiv2_mf}.
\end{proof}

\section*{Acknowledgements}
We wish to acknowledge Katharina Schuh for useful discussions.

N.~Bou-Rabee has been supported by the Alexander von Humboldt foundation and the National Science Foundation under Grant No.~DMS-1816378. 
 
A.~Eberle has been supported by the Hausdorff Center for Mathematics. Gef\"ordert durch die Deutsche Forschungsgemeinschaft (DFG) im Rahmen der Exzellenzstrategie des Bundes und der L\"ander - GZ 2047/1, Projekt-ID 390685813.

\bibliographystyle{amsplain}
\bibliography{nawaf}
\end{document}